\newtheorem{thm}{Theorem}[section]
\newtheorem{ass}[thm]{Assumption}
\newtheorem{rem}{Remark}
\newtheorem{exa}[thm]{Example}
\theoremstyle{definition}
\numberwithin{equation}{section}
\journal{Communications in Nonlinear Science and Numerical Simulation}
\begin{document}
	
	\begin{frontmatter}
		
		%% Title, authors and addresses
		
		%% use the tnoteref command within \title for footnotes;
		%% use the tnotetext command for the associated footnote;
		%% use the fnref command within \author or \address for footnotes;
		%% use the fntext command for the associated footnote;
		%% use the corref command within \author for corresponding author footnotes;
		%% use the cortext command for the associated footnote;
		%% use the ead command for the email address,
		%% and the form \ead[url] for the home page:
		%%
		%% \title{Title\tnoteref{label1}}
		%% \tnotetext[label1]{}
		%% \author{Name\corref{cor1}\fnref{label2}}
		%% \ead{email address}
		%% \ead[url]{home page}
		%% \fntext[label2]{}
		%% \cortext[cor1]{}
		%% \address{Address\fnref{label3}}
		%% \fntext[label3]{}
		
		\title{Propagation of chaos in infinite horizon and numerical stability for  stochastic McKean-Vlasov equations}

		%% use optional labels to link authors explicitly to addresses:
		%% \author[label1,label2]{<author name>}
		%% \address[label1]{<address>}
		%% \address[label2]{<address>}
		\author[label1]{Zhuoqi Liu}
		\author[label2]{Shuaibin Gao}
		\author[label3]{Chenggui Yuan}
		\author[label1]{Qian Guo\corref{cor1}}\ead{qguo@shnu.edu.cn}
		
		\cortext[cor1]{Corresponding author at: Department of Mathematics, Shanghai Normal University, Shanghai 200234, China.}
		\address[label1]{Department of Mathematics, Shanghai Normal University, Shanghai 200234, China}
		\address[label2]{School of Mathematics and Statistics, South-Central Minzu University,
			Wuhan 430074, China}
		\address[label3]{Department of Mathematics, Swansea University, Bay Campus, Swansea SA1 8EN, UK}

		\begin{abstract}
			This paper focuses on the numerical stability of stochastic McKean-Vlasov equations (SMVEs) via the stochastic particle method. Firstly, the long-time propagation of chaos in the mean-square sense is obtained, and the almost sure propagation in infinite horizon is also proved.
			Next, when the coefficients satisfy linear growth conditions, the mean-square and almost sure	exponential stabilities of the Euler-Maruyama (EM) scheme associated with the corresponding interacting particle system are shown through an ingenious manipulation of empirical measure. Then, for the case that the state variables in drift and diffusion are both superlinear, the mean-square exponential stability of the backward EM scheme for the interacting system is achieved without the particle corruption, which is a novel conclusion. Moreover, under the linear growth condition on the diffusion coefficient, the almost sure stability of the backward EM scheme is studied. Combining these assertions enables the numerical solutions to reproduce the stabilities of the original SMVEs. The examples, including a feedback control problem and a stochastic opinion dynamics model, are provided to demonstrate  the importance of theoretical analysis of numerical stability.
		\end{abstract}
		
		\begin{keyword}
			Stability; Stochastic McKean-Vlasov equations;  Propagation of chaos; Euler-Maruyama scheme; backward Euler-Maruyama scheme
		\end{keyword}
		
	\end{frontmatter}

	%%
	%% Start line numbering here if you want
	%%
	% \linenumbers
	
	%% main text

	The applications of stochastic McKean-Vlasov equations (SMVEs) have been presented in many scientific fields such as biological models, mathematical finance, and physics, which leads to widespread research by scholars \cite{dosjia,4,5}. After the pioneering works by McKean in \cite{6,8}, the existence, uniqueness, and moment boundedness of SMVEs have been well-established \cite{3,21,32}. Moreover, the results, including Harnack inequalities \cite{32}, Feynman-Kac formula \cite{2}, gradient estimate \cite{suoy}, and ergodicity \cite{11} have been investigated.
	
	In general, the numerical solutions are investigated to help scholars obtain the properties of analytical solutions to SMVEs, which are difficult to be solved explicitly. The stochastic particle method in \cite{1,0} is a well-known scheme to approximate SMVEs. The main idea is
	to use the empirical measure
	of the corresponding interacting particle system to approach the law of original SMVE, then the numerical schemes can be established to approximate such interacting particle system. By means of the stochastic particle method, many scholars have discussed the numerical schemes, such as tamed Euler-Maruyama (EM) scheme \cite{31,9}, split-step scheme \cite{01}, multi-level Monte-Carlo scheme \cite{bao00}, and so on.
	To improve the convergence rate of numerical scheme to the interacting particle system, the tamed Milstein scheme was discussed in \cite{bao00,000}. By exploiting the fixed point theorem, the existence and uniqueness of the SMVEs whose drift and diffusion coefficients both can grow superlinearly were shown in \cite{51}, then the tamed EM and Milstein schemes were proposed. Moreover, the numerical schemes for SMVEs with irregular coefficients were also analyzed via the Yamada-Watanabe approximation technique and the Zvonkin transformation in \cite{001,05}.
	
	On the other hand, the infinite-time stability of stochastic system is crucial in some application scenarios. For the classical stochastic differential equations (SDEs) (whose coefficients are independent of the law), the moment stability and almost sure stability have been systematically investigated in \cite{100,mao0}.
	Then, these two types of stability of the numerical schemes for classical SDEs have been widely discussed, such as EM scheme \cite{mao4,MM,dengfeiqi}, $\theta$-EM scheme \cite{huangc,maom,wanggan,34}, truncated EM scheme \cite{songm,languang}, backward EM scheme \cite{pguo}, split-step EM scheme \cite{xiezhang}, adaptive EM scheme \cite{adaptiveEuler}, and so on.
	
	Returning to SMVEs, there are a few papers which analyze the stability of their analytical solutions. The exponential stability of the semilinear McKean-Vlasov stochastic evolution equation was shown in \cite{gov}. Then, the moment stability and almost sure stability of SMVEs \cite{qiaosiam} and multivalued SMVEs \cite{gongqiao} were presented.
	And the ergodicity for SMVEs was analyzed in \cite{axina3,axina1}.
	Based on \cite{qiaosiam}, the stabilization of SMVEs with feedback control via discrete time state observation was revealed in \cite{wuhao}, 
	where the stability of the corresponding interacting particle system was also analyzed due to the unobservability of the law of SMVEs. Unfortunately, although the example was provided in \cite{wuhao}, the corresponding numerical solution was not performed, and the theoretical analysis of the numerical scheme was not shown either.
	% Then our paper can fill this gap.

	Furthermore, many models have superlinear state variables in their coefficients, such as the FitzHugh-Nagumo model \cite{dosjia}, Cucker-Smale model \cite{axina2}, and so on.
	However, according to \cite{boom}, the moment of the unmodified EM schemes will diverge in finite time for SDEs with superlinear coefficients. As a result, implicit schemes have been widely used to handle such cases. 
	The convergence rate of the backward EM scheme for SMVE was analyzed in \cite{31}. 
	Then, it is necessary to analyze the mean-square and almost sure exponential stabilities of the numerical solution to the backward EM scheme.

	To sum up, based on the stochastic particle method,	this paper focuses on the infinite-time exponential stabilities of the numerical schemes for the interacting particle systems related to the original SMVEs. 
	Firstly, the mean-square and almost sure exponential stabilities of the interacting particle system are given.
	Next, the long-time propagation of chaos in the mean-square sense is proved. Since the almost sure propagation of chaos in infinite horizon is difficult to obtain directly, 
	the Chebyshev inequality and the Borel-Cantelli lemma are
	utilized to achieve the purpose.
	Then, the mean-square and almost sure exponential stabilities with rates of EM scheme for the interacting particle are revealed, under the assumption that the coefficients are linearly growing.
	Moreover, we also construct the backward EM scheme for the interacting particle system with superlinear coefficients, and analyze its mean-square and almost sure stabilities.
	Finally, some examples, including a feedback control problem and a stochastic opinion dynamics model,  are simulated to verify the effectiveness of the EM scheme and the backward EM scheme.

	%Next, the mean-square and almost sure exponential stabilities with rates of EM and backward EM scheme for the interacting particle are revealed.

	The paper is organized as follows.
	Some notations, assumptions, and particle systems are presented in Section 2.
	The mean-square and almost sure stabilities of the interacting particle system are discussed in Section 3.
	The theories about the propagation of chaos in infinite horizon are analyzed in Section 4.
	The mean-square and almost sure stabilities of the EM and backward EM schemes are presented in Sections 5 and 6, respectively.
	In the last section, we give several numerical examples to illustrate the theories.

	\section{Preliminary} 
	%\textbf{Notations:}
	Let $\big(\Omega,\mathcal{F},(\mathcal{F}_t)_{t\geq0},\mathbb{P}\big)$ be a complete probability space with the filtration $(\mathcal{F}_{t})_{t\geq0}$ that satisfies the usual conditions. Denote by $\mathbb{E}$ the probability expectation with respect to (w.r.t.) $\mathbb{P}$. Let $|\cdot|$ and $\|\cdot\|$ represent the Euclidean norm for the vector in $\mathbb{R}^d$ and the norm for the matrix in $\mathbb{R}^{d\times m}$, respectively.
	For $p\geq1$, the set of random variables $Z$ in $\mathbb{R}^d$ with $\mathbb{E}|Z|^p<\infty$ is  denoted by $L^p(\mathbb{R}^d)$. 
	%Let $c_1\wedge c_2=\min\{c_1,c_2\}$
	%and $c_1\vee c_2=\max\{c_1,c_2\}$ for real numbers $c_1, c_2$. 
	Let $\mathcal{P}(\mathbb{R}^d)$ be the collection of all probability measures on $\mathbb{R}^d$. For $p\geq1$, define 
	\begin{equation*}
		\mathcal{P}_{p}(\mathbb{R}^d)=\left\lbrace \mu\in\mathcal{P}(\mathbb{R}^d):\left(\int_{\mathbb{R}^d}|x|^p\mu(dx)\right)^{1/p}<\infty\right\rbrace .
	\end{equation*}
	Denote by $\delta_x(\cdot)$ the Dirac measure at point $x\in\mathbb{R}^d$, which belongs to $\mathcal{P}_{p}(\mathbb{R}^d)$.
	For $p\geq1$, the Wasserstein distance between $\mu,\nu\in\mathcal{P}_{p}(\mathbb{R}^d)$ is defined by
	\begin{equation*}
		\mathbb{W}_p(\mu,\nu):=\inf_{\pi\in\mathcal{C}(\mu,\nu)}\left(  \int_{\mathbb{R}^d\times\mathbb{R}^d}|x-y|^p\pi(dx,dy)\right)^{1/p},
	\end{equation*}
	where  $\mathcal{C}(\mu,\nu)$ is the family of all couplings for $\mu,\nu$, i.e., $\pi(\cdot, \mathbb{R}^d)= \mu(\cdot)$ and $\pi( \mathbb{R}^d,\cdot)= \nu(\cdot)$.

	\begin{rem}\label{rem1}
		For any $\mu\in\mathcal{P}_{p}(\mathbb{R}^d)$ with $p\geq 1$, let $\mathcal{W}_p(\mu)=\left(\int_{\mathbb{R}^d}|x|^p\mu(dx)\right)^{1/p}$. Then, the assertion $	\mathbb{W}_2(\mu,\delta_{0})=	\mathcal{W}_2(\mu)$ holds with $\mu\in\mathcal{P}_{2}(\mathbb{R}^d)$, which is proved in Lemma 2.3 in  \cite{3}.
	\end{rem}
	Consider the following general SMVE of the form:
	\begin{equation}\label{MV0}
		dX_{t}=b(X_{t},\mathcal{L}_{X_t})dt+\sigma(X_{t},\mathcal{L}_{X_t})d W_{t},~~~t\geq0,
	\end{equation}
	with the non-zero initial value $X_{0}\in L^{\tilde{p}}_{\mathcal{F}_0}$, where
	$L^{\tilde{p}}_{\mathcal{F}_0}$
	is the collection of all $\mathcal{F}_0$-measurable random variables $X_{0}$ in $\mathbb{R}^d$ with $\mathbb{E}|X_{0}|^{\tilde{p}}<\infty$ for all $\tilde{p}\geq 2$.
	Here, $\mathcal{L}_{X_t}$ is the law of $X_t$, $W(t)$ is an $m$-dimensional Brownian motion on the given probability space, and $b:\mathbb{R}^d\times\mathcal{P}_2(\mathbb{R}^d)\rightarrow\mathbb{R}^d$, $\sigma:\mathbb{R}^d\times\mathcal{P}_2(\mathbb{R}^d)\rightarrow\mathbb{R}^{d\times m}$ are both Borel-measurable. 
	%$b(x,\mu)$ is a continuous function with respect to $x\in \mathbb{R}^d$.
	Assume that $b(x,\mu), \sigma(x,\mu)$  in this paper satisfy the locally Lipschitz  condition w.r.t. $x$. To achieve the target, we need  the following assumptions.
	%Our main results of this paper are based on the following two assumptions. 

	\begin{ass}\label{ass2}
		There exist positive constants $K_1,K_2$ such that
		\begin{equation*}
			2\langle x-y,b(x,\mu)-b(y,\nu)\rangle+\|\sigma(x,\mu)-\sigma(y,\nu)\|^2\leq-K_1|x-y|^2+K_2\mathbb{W}_{2}(\mu,\nu)^2,
		\end{equation*}
		for any $x,y\in\mathbb{R}^d$ and $\mu,\nu\in\mathcal{P}_2(\mathbb{R}^d)$.
	\end{ass}

	\begin{ass}\label{ass1}
		There exist three  constants $a_1,a_2>0$ and $q\geq2$ such that
		\begin{equation*}
			2\langle x,b(x,\mu)\rangle+(q-1)\|\sigma(x,\mu)\|^2\leq-a_1|x|^2+a_2\mathcal{W}_{2}(\mu)^{2},
		\end{equation*}
		for any $x\in\mathbb{R}^d$ and $\mu\in\mathcal{P}_2(\mathbb{R}^d)$.
	\end{ass}
	
	Under these  assumptions, the SMVE (\ref{MV0}) admits a unique strong solution \cite{51}.
	By employing the proof techniques in Theorems 3.1 and 5.2 in \cite{qiaosiam}, together with the results on pathwise and law uniqueness in \cite{dingqiao,1991}, we can easily  show that the solution to SMVE (\ref{MV0}) remains mean-square and almost surely stable even when the  coefficients exhibit superlinear growth with respect to the state variables.
	Therefore, we only state the result in the following theorem and omit the proof.
	\begin{thm}\label{thm000}
		Let Assumptions \ref{ass2} and \ref{ass1} hold with $a_1>a_2$. Then, the solution $X_{t}$ to SMVE (\ref{MV0}) is stable in mean-square and almost sure sense, i.e.,
		$$\lim_{t\rightarrow\infty}\mathbb{E}|X_{t}|^2=0~~~
		\text{and}~~~
		\mathbb{P}\left\{\lim_{t\rightarrow\infty}|X_{t}|=0\right\}=1.$$
	\end{thm}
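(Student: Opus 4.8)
The plan is to reduce the distribution-dependent dissipativity of Assumption~\ref{ass1} to a closed scalar differential inequality for the moments of $|X_t|$, using the identity from Remark~\ref{rem1}. Indeed, taking $\mu=\mathcal{L}_{X_t}$ gives $\mathcal{W}_2(\mathcal{L}_{X_t})^2=\int_{\mathbb{R}^d}|x|^2\,\mathcal{L}_{X_t}(dx)=\mathbb{E}|X_t|^2$, so after specialising to the law of the solution the right-hand side of Assumption~\ref{ass1} becomes $-a_1|X_t|^2+a_2\,\mathbb{E}|X_t|^2$. Assumption~\ref{ass2}, for its part, is what secures the well-posedness and uniqueness invoked throughout.

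For the mean-square assertion I would apply It\^o's formula to $|X_t|^2$, obtaining
$$d|X_t|^2=\big(2\langle X_t,b(X_t,\mathcal{L}_{X_t})\rangle+\|\sigma(X_t,\mathcal{L}_{X_t})\|^2\big)\,dt+2\langle X_t,\sigma(X_t,\mathcal{L}_{X_t})\,dW_t\rangle.$$
Taking expectation removes the martingale term, and since $q\geq 2$ gives $\|\sigma\|^2\leq(q-1)\|\sigma\|^2$, Assumption~\ref{ass1} combined with the identity above yields
$$\frac{d}{dt}\,\mathbb{E}|X_t|^2\leq -a_1\,\mathbb{E}|X_t|^2+a_2\,\mathbb{E}|X_t|^2=-(a_1-a_2)\,\mathbb{E}|X_t|^2.$$
As $a_1>a_2$, Gronwall's inequality gives $\mathbb{E}|X_t|^2\leq \mathbb{E}|X_0|^2\,e^{-(a_1-a_2)t}\to 0$. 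The analogous computation for $|X_t|^p$ with $2\leq p\leq q$—bounding the It\^o correction by $|\sigma^\top X_t|^2\leq|X_t|^2\|\sigma\|^2$ and using Jensen's inequality to absorb $\mathbb{E}(|X_t|^{p-2})\,\mathbb{E}|X_t|^2\leq\mathbb{E}|X_t|^p$—delivers $\mathbb{E}|X_t|^p\leq \mathbb{E}|X_0|^p\,e^{-\frac{p(a_1-a_2)}{2}t}$, decay of every admissible moment that I will need to control a superlinear $\sigma$ in the almost sure step.

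For the almost sure assertion I would upgrade this decay through a Borel--Cantelli argument. On each interval $[n,n+1]$, applying It\^o's formula to $|X_t|^p$, bounding the drift by the dissipativity estimate and the supremum of the stochastic integral by the Burkholder--Davis--Gundy inequality, and then dominating the arising $\||X_t|\,\sigma(X_t,\mathcal{L}_{X_t})\|$ term by the moment bounds above, I expect a bound of the form $\mathbb{E}\big[\sup_{n\leq t\leq n+1}|X_t|^p\big]\leq C\,e^{-\frac{p(a_1-a_2)}{2}n}$. Fixing $\varepsilon\in(0,\tfrac{p(a_1-a_2)}{2})$, Chebyshev's inequality then gives $\mathbb{P}\big(\sup_{n\leq t\leq n+1}|X_t|^p>e^{-(\frac{p(a_1-a_2)}{2}-\varepsilon)n}\big)\leq C\,e^{-\varepsilon n}$, which is summable; the Borel--Cantelli lemma forces $\sup_{n\leq t\leq n+1}|X_t|\to 0$ for almost every $\omega$, i.e. $\mathbb{P}\{\lim_{t\to\infty}|X_t|=0\}=1$.

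The main obstacle is the almost sure step, not the mean-square one. Two points demand care. First, because of the $\mathbb{E}|X_t|^2$ contribution the dissipativity of Assumption~\ref{ass1} is \emph{not} pathwise, so the $a_2$-term cannot be discarded along trajectories and must instead be kept inside the expectation until the moment estimate is formed. Second, the Burkholder--Davis--Gundy bound on the martingale part involves $\||X_t|\,\sigma(X_t,\mathcal{L}_{X_t})\|$, which for a superlinear $\sigma$ is integrable only because Assumption~\ref{ass1} furnishes decaying moments up to order $q$; choosing $p$ strictly below $q$ so these moments dominate the growth of $\sigma$ is exactly the mechanism by which the statement survives superlinear coefficients. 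This parallels the arguments of Theorems~3.1 and~5.2 in \cite{qiaosiam}, together with the pathwise and law uniqueness results of \cite{dingqiao,1991}.
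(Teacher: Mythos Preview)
The paper does not actually prove this theorem: it states that the argument ``follows by employing the proof techniques in Theorems~3.1 and~5.2 in \cite{qiaosiam}'' and then omits the proof. So your proposal is being compared not against a written proof but against the approach implicit in the cited reference and in the paper's later arguments for the interacting particle system (Theorems~\ref{thm31} and~3.2).

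Your mean-square argument is correct and coincides with what the paper does elsewhere: the $q$-th moment estimate you sketch is exactly the computation carried out for~\eqref{sspc} inside the proof of Theorem~\ref{963a}, and the second-moment case is the $N=1$ version of Theorem~\ref{thm31}. No objection there.

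For the almost sure part your Borel--Cantelli route is legitimate, but it is not the route the paper (and presumably \cite{qiaosiam}) takes. In the analogous Theorem~3.2 for the particle system, the paper applies It\^o to $e^{at}|X_t^{j,N}|^2$, bounds the drift by Assumption~\ref{ass1}, substitutes the already-established mean-square decay for the $\mathcal{W}_2(\mu_s)^2=\mathbb{E}|X_s|^2$ term so that the deterministic integral remains finite, and then invokes the nonnegative semimartingale convergence lemma (Lemma~2.2 in \cite{mao1997}) to conclude $\limsup_{t\to\infty} e^{at}|X_t|^2<\infty$ a.s. This is more direct: it never needs a supremum bound on $[n,n+1]$, never invokes BDG, and therefore never needs to control $|X_t|\,\|\sigma(X_t,\mathcal{L}_{X_t})\|$ separately. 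The obstacle you correctly flag---that Assumptions~\ref{ass2} and~\ref{ass1} do not by themselves give a polynomial bound on $\|\sigma\|$, so the BDG term is delicate under superlinear diffusion---simply does not arise in the semimartingale approach. Your argument can be made to work, but it requires either an extra growth assumption on $\sigma$ or a more careful stopping-time localisation than you have written down; the paper's route avoids this entirely.
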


	It is the most common way to use the interacting particle system to approximate SMVE, whose crucial point is exploiting the  empirical measure to approximate the law $\mathcal{L}_{X_t}$.
	For any $j\in\mathbb{S}_N:=\{1,2,\cdots,N\}$, let $(W^j,X_0^j)$ be independent copies of $(W,X_0)$. Then, the non-interacting particle system w.r.t. (\ref{MV0}) is introduced by
	\begin{equation}\label{nonMV}
		dX_{t}^{j}=b(X_{t}^{j},\mu_{t}^{X^j})dt+\sigma(X_{t}^{j},\mu_{t}^{X^j})d W_{t}^{j},~~~j\in\mathbb{S}_{N},
	\end{equation}
	with the initial value $X_0^j=X_0$, where $\mu_{t}^{X_j}$ is the law of $X^j$ at $t$. It is obvious that $\mu_{t}^{X^j}=\mu_{t}^{X}$ holds for any $j\in\mathbb{S}_{N}$.
	The corresponding interacting particles system is defined by
	\begin{equation}\label{MVVVVV}
		dX_{t}^{j,N}=b(X_{t}^{j,N},\mu_{t}^{X,N})dt+\sigma(X_{t}^{j,N},\mu_{t}^{X,N})d W_{t}^{j}, ~~~j\in\mathbb{S}_{N},
	\end{equation}
	with the initial value $X_0^j=X_0$, where the empirical measure is defined by $\mu_{t}^{X,N}(\cdot)=\frac{1}{N}\sum_{i=1}^{N}\delta_{X_{t}^{i,N}}(\cdot)$. 
	The mean-square  and almost sure stabilities of the non-interacting particle system (\ref{nonMV}) follow from the corresponding results of SMVEs in \cite{qiaosiam}.
	We will focus on the stability
	of interacting particle system.

	\section{Stability of interacting particle system}

	In this section, the mean-square and almost sure stabilities of the interacting particle system (\ref{MVVVVV}) are presented. Moreover, the  rates of stabilities are also revealed.
	% We first give the definitions of these two  stabilities.
%	\begin{defn}
%		The solution $X_{t}^{j,N}$ to interacting particle system (\ref{MVVVVV}) is said to be  exponentially stable in mean-square sense if 
%		$$\lim_{t\rightarrow\infty}\frac{1}{t}\log(\mathbb{E}|X_{t}^{j,N}|^2)\leq-\lambda_0,$$
%		holds for some positive constant $\lambda_0$ and any $j\in\mathbb{S}_N$.
%	\end{defn}
%	\begin{defn}
%		The solution $X_{t}^{j,N}$ to interacting particle system (\ref{MVVVVV}) is said to be almost surely exponentially stable if 
%		$$\lim_{t\rightarrow\infty}\frac{1}{t}\log(\frac{1}{N}\sum_{i=1}^{N}|X_{t}^{i,N}|^2)\leq-\lambda_1~~~a.s.$$
%		holds for some positive constant $\lambda_1$.
%	\end{defn}
	
	\begin{thm}\label{thm31}
		Let Assumptions \ref{ass2} and \ref{ass1} hold. If $a_1>a_2$, then the solution $X_{t}^{j,N}$ to interacting particle system (\ref{MVVVVV}) is exponentially stable in mean-square sense, i.e., for any $j\in\mathbb{S}_N$ and some $a\in(0,a_1-a_2]$,
		$$\lim_{t\rightarrow\infty}\frac{1}{t}\log(\mathbb{E}|X_{t}^{j,N}|^2)\leq-a.$$
	\end{thm}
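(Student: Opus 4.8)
The plan is to run a Lyapunov-type argument on the single coordinate $|X_t^{j,N}|^2$ and then close the resulting inequality by exploiting that the second moment of the empirical measure coincides, in expectation, with the second moment of any individual particle. First I would fix $j\in\mathbb{S}_N$ and apply It\^o's formula to $|X_t^{j,N}|^2$. To legitimize the computation I would introduce the stopping times $\tau_R=\inf\{t\geq0:\max_{i\in\mathbb{S}_N}|X_t^{i,N}|\geq R\}$, work on $[0,t\wedge\tau_R]$ so that $\int_0^{t\wedge\tau_R}2\langle X_s^{j,N},\sigma(X_s^{j,N},\mu_s^{X,N})\,dW_s^j\rangle$ is a genuine martingale of zero mean, and pass to the limit $R\to\infty$ at the end using an a priori second moment bound (which itself follows from Assumption~\ref{ass1}). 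Taking expectations yields
\begin{equation*}
\mathbb{E}|X_{t\wedge\tau_R}^{j,N}|^2=\mathbb{E}|X_0|^2+\mathbb{E}\int_0^{t\wedge\tau_R}\Big(2\langle X_s^{j,N},b(X_s^{j,N},\mu_s^{X,N})\rangle+\|\sigma(X_s^{j,N},\mu_s^{X,N})\|^2\Big)\,ds.
\end{equation*}

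Next I would invoke Assumption~\ref{ass1}. Since $q\geq2$ gives $q-1\geq1$, the diffusion term satisfies $\|\sigma\|^2\leq(q-1)\|\sigma\|^2$, so the integrand is bounded above by $-a_1|X_s^{j,N}|^2+a_2\,\mathcal{W}_2(\mu_s^{X,N})^2$. The decisive observation is that, by the definition of the empirical measure,
\begin{equation*}
\mathcal{W}_2(\mu_s^{X,N})^2=\int_{\mathbb{R}^d}|x|^2\,\mu_s^{X,N}(dx)=\frac{1}{N}\sum_{i=1}^N|X_s^{i,N}|^2,
\end{equation*}
so that $\mathbb{E}\,\mathcal{W}_2(\mu_s^{X,N})^2=\frac{1}{N}\sum_{i=1}^N\mathbb{E}|X_s^{i,N}|^2$. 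Because the driving noises $W^i$ and the initial data are i.i.d.\ and the coefficients are symmetric in the particles, the family $(X^{1,N},\dots,X^{N,N})$ is exchangeable, whence $\mathbb{E}|X_s^{i,N}|^2=\mathbb{E}|X_s^{j,N}|^2$ for every $i$. Therefore $\mathbb{E}\,\mathcal{W}_2(\mu_s^{X,N})^2=\mathbb{E}|X_s^{j,N}|^2$, and the empirical-measure contribution folds back exactly onto the quantity being estimated.

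Combining the two steps and letting $R\to\infty$ produces the closed integral inequality $\mathbb{E}|X_t^{j,N}|^2\leq\mathbb{E}|X_0|^2-(a_1-a_2)\int_0^t\mathbb{E}|X_s^{j,N}|^2\,ds$. A Gronwall argument then gives $\mathbb{E}|X_t^{j,N}|^2\leq\mathbb{E}|X_0|^2\,e^{-(a_1-a_2)t}$, and taking $\frac{1}{t}\log(\cdot)$ followed by $t\to\infty$ yields the claim with $a=a_1-a_2$, which lies in $(0,a_1-a_2]$ precisely because $a_1>a_2$. Should the exchangeability identity feel delicate, one may instead sum the per-particle inequality over $j$ and run Gronwall on the aggregate moment $\sum_{i}\mathbb{E}|X_t^{i,N}|^2$, losing only a harmless factor $N$ that vanishes under $\frac{1}{t}\log$. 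I expect the only genuine friction to be the localization and uniform-integrability bookkeeping needed to discard $\tau_R$ and pass the limit through both sides; the heart of the proof, namely the identity $\mathbb{E}\,\mathcal{W}_2(\mu_s^{X,N})^2=\mathbb{E}|X_s^{j,N}|^2$ that lets the dissipativity margin $a_1-a_2$ survive intact, is short once that identity is established.
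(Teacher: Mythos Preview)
Your proposal is correct and follows essentially the same route as the paper: both apply It\^o's formula to the squared particle, invoke Assumption~\ref{ass1}, and close the inequality via the exchangeability identity $\mathbb{E}\,\mathcal{W}_2(\mu_s^{X,N})^2=\mathbb{E}|X_s^{j,N}|^2$. The only cosmetic difference is that the paper applies It\^o directly to $e^{at}|X_t^{j,N}|^2$ rather than to $|X_t^{j,N}|^2$ followed by Gronwall, and it omits the localization step you (rightly) include.
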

	\begin{proof}
		For any $j\in\mathbb{S}_N$ and some $a\in(0,a_1-a_2]$, an application of It\^o's formula to $e^{at}|X_{t}^{j,N}|^2$ gives that
		\begin{equation}\label{31}
			\begin{split}
				e^{at}|X_{t}^{j,N}|^2=&|X_0^j|^2+\int_{0}^{t}ae^{as}|X_{s}^{j,N}|^2ds+2\int_{0}^{t}e^{as}\langle X_{s}^{j,N},b(X_{s}^{j,N},\mu_{s}^{X,N})\rangle ds\\&+\int_{0}^{t}e^{as}\|\sigma(X_{s}^{j,N},\mu_{s}^{X,N})\|^2 ds
				+2\int_{0}^{t}e^{as}\langle X_{s}^{j,N},\sigma(X_{s}^{j,N},\mu_{s}^{X,N})\rangle dW_s^j.
			\end{split}
		\end{equation}
		Since the particles are all  identically distributed for any $j\in\mathbb{S}_{N}$, we derive
		$$\mathbb{E}\big[\frac{1}{N}\sum_{i=1}^{N}|X_{t}^{i,N}|^2\big]=\mathbb{E}\big[|X_{t}^{j,N}|^2\big].$$
		By taking the expectation on both sides of (\ref{31}) and taking Assumption \ref{ass1} into consideration, we have
		\begin{equation*}
			\begin{split}
				e^{at}\mathbb{E}|X_{t}^{j,N}|^2=&\mathbb{E}|X_0^j|^2+\int_{0}^{t}ae^{as}\mathbb{E}|X_{s}^{j,N}|^2ds+2\int_{0}^{t}e^{as}\mathbb{E}\langle X_{s}^{j,N},b(X_{s}^{j,N},\mu_{s}^{X,N})\rangle ds\\&+\int_{0}^{t}e^{as}\mathbb{E}\|\sigma(X_{s}^{j,N},\mu_{s}^{X,N})\|^2 ds\\
				\leq&\mathbb{E}|X_0^j|^2+\int_{0}^{t}e^{as}\big((a-a_1)\mathbb{E}| X_{s}^{j,N}|^2+a_2\mathbb{E}\big[\frac{1}{N}\sum_{i=1}^{N}|X_{s}^{i,N}|^2\big]\big) ds\\
				\leq&\mathbb{E}|X_0^j|^2+\int_{0}^{t}e^{as}(a_2-a_1+a)\mathbb{E}| X_{s}^{j,N}|^2 ds,
			\end{split}
		\end{equation*}
		where we use a result of Wasserstein metric
		$\mathbb{W}_{2}(\mu_{t}^{X,N},\delta_{0})^2\leq\frac{1}{N}\sum_{i=1}^{N}|X_{t}^{i,N}|^2.$
		This implies
		\begin{equation*}
			\mathbb{E}|X_{t}^{j,N}|^2\leq e^{-at}\mathbb{E}|X_0^j|^2.
		\end{equation*}
	%	which gives the desired result.
	\end{proof}
	
	\begin{thm}
		Let Assumptions \ref{ass2} and \ref{ass1} hold. If $a_1>a_2$, then the solution $X_{t}^{j,N}$ to interacting particle system (\ref{MVVVVV}) is almost surely stable, i.e., for some $a\in(0,a_1-a_2]$,
		$$\lim_{t\rightarrow\infty}\frac{1}{t}\log(\frac{1}{N}\sum_{i=1}^{N}|X_{t}^{i,N}|^2)\leq-a~~~a.s.$$
	\end{thm}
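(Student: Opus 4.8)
The plan is to monitor the averaged energy $V_t:=\frac1N\sum_{i=1}^N|X_t^{i,N}|^2$ and to exploit the fact that, for an empirical measure, the Wasserstein functional collapses \emph{exactly} onto this same quantity, so that the evolution of $V_t$ closes up. First I would apply It\^o's formula to $e^{at}|X_t^{i,N}|^2$ for each $i\in\mathbb{S}_N$, precisely as in (\ref{31}), then sum over $i$ and divide by $N$ to obtain the exact decomposition
\begin{equation*}
e^{at}V_t=V_0+\int_0^t G_s\,ds+M_t,
\end{equation*}
where $V_0=\frac1N\sum_{i=1}^N|X_0^i|^2$,
\begin{equation*}
G_s:=e^{as}\Big(aV_s+\frac1N\sum_{i=1}^N\big[2\langle X_s^{i,N},b(X_s^{i,N},\mu_s^{X,N})\rangle+\|\sigma(X_s^{i,N},\mu_s^{X,N})\|^2\big]\Big),
\end{equation*}
and $M_t:=\frac2N\sum_{i=1}^N\int_0^t e^{as}\langle X_s^{i,N},\sigma(X_s^{i,N},\mu_s^{X,N})\rangle\,dW_s^i$ is a continuous local martingale with $M_0=0$ (a finite sum of stochastic integrals driven by the independent $W^i$).

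Next I would estimate $G_s$. Since $q\geq 2$, Assumption \ref{ass1} gives $2\langle x,b(x,\mu)\rangle+\|\sigma(x,\mu)\|^2\leq-a_1|x|^2+a_2\mathcal{W}_2(\mu)^2$. The crucial point is that, $\mu_s^{X,N}$ being the empirical measure, Remark \ref{rem1} yields $\mathcal{W}_2(\mu_s^{X,N})^2=\int_{\mathbb{R}^d}|x|^2\mu_s^{X,N}(dx)=V_s$ \emph{exactly}, not merely as a bound. Averaging the coefficient estimate over $i$ therefore produces
\begin{equation*}
G_s\leq e^{as}\big(a-a_1+a_2\big)V_s\leq 0,
\end{equation*}
because $a\leq a_1-a_2$ and $V_s\geq 0$. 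Hence $U_t:=-\int_0^t G_s\,ds$ is a continuous, adapted, nondecreasing process with $U_0=0$, and the decomposition becomes $e^{at}V_t=V_0-U_t+M_t\geq 0$.

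At this stage the conclusion follows from the nonnegative semimartingale convergence theorem (see, e.g., \cite{mao0}). Taking the nonnegative $\mathcal{F}_0$-measurable initial value $V_0$ (which satisfies $\mathbb{E}V_0<\infty$ since $X_0\in L^{\tilde p}_{\mathcal{F}_0}$), trivial increasing part $A_t\equiv 0$, the increasing process $U_t$, and the local martingale $M_t$, the nonnegative semimartingale $e^{at}V_t$ converges almost surely to a finite limit. In particular $e^{at}V_t$ is a.s. dominated by some finite random variable $C(\omega)$, so that $V_t\leq C(\omega)e^{-at}$ for all $t\geq 0$, whence
\begin{equation*}
\limsup_{t\to\infty}\frac1t\log\Big(\frac1N\sum_{i=1}^N|X_t^{i,N}|^2\Big)\leq\limsup_{t\to\infty}\Big(\frac{\log C(\omega)}{t}-a\Big)=-a\qquad a.s.
\end{equation*}

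The two routine points to verify are that summing the It\^o expansions yields a genuine local martingale---immediate from the independence of the $W^i$---and that the hypotheses of the convergence theorem hold, namely the nonnegativity of $e^{at}V_t$ and the monotonicity of $U_t$, both of which rest on the sign condition $a\leq a_1-a_2$. The genuinely substantive step, which I expect to be the crux, is the exact identity $\mathcal{W}_2(\mu_s^{X,N})^2=V_s$: it is precisely this collapse of the mean-field term onto the averaged energy that keeps the finite-variation part nonpositive and lets the argument proceed \emph{without} any separate control of $\|\sigma\|^2$, so that no linear growth hypothesis on the coefficients is required.
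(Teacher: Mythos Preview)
Your proof is correct and follows essentially the same route as the paper's: apply It\^o's formula to $e^{at}|X_t^{i,N}|^2$, average over $i$, use Assumption \ref{ass1} together with the identification $\mathcal{W}_2(\mu_s^{X,N})^2=V_s$ to make the finite-variation part nonpositive, and then invoke the nonnegative semimartingale convergence theorem (the paper cites this as Lemma 2.2 in \cite{mao1997}). The only cosmetic difference is that the paper writes the collapse $\mathcal{W}_2(\mu_s^{X,N})^2\leq V_s$ as an inequality rather than the equality you emphasize, but only the inequality is needed and the argument is otherwise identical.
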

	\begin{proof}
		For some $a\in(0,a_1-a_2]$, summing (\ref{31}) from $j=1$ to $N$ and then dividing it by $N$ lead to
		%	By summing over $j=1,\cdots,N$ and*** dividing by $N$***, *(\ref{31})* becomes into
		\begin{equation*}
			\begin{split}
				&	e^{at}\frac{1}{N}\sum_{i=1}^{N}|X_{t}^{i,N}|^2\\=&\frac{1}{N}\sum_{i=1}^{N}|X_0^i|^2+\int_{0}^{t}e^{as}\big((a-a_1)\frac{1}{N}\sum_{i=1}^{N}|X_{s}^{i,N}|^2+a_2\frac{1}{N}\sum_{i=1}^{N}|X_{s}^{i,N}|^2\big)ds\\&
				+2\int_{0}^{t}e^{as}\frac{1}{N}\sum_{i=1}^{N}\langle X_{s}^{i,N},\sigma(X_{s}^{i,N},\mu_{s}^{X,N})\rangle dW_s^i\\
				\leq&\frac{1}{N}\sum_{i=1}^{N}|X_0^i|^2
				+2\int_{0}^{t}e^{as}\frac{1}{N}\sum_{i=1}^{N}\langle X_{s}^{i,N},\sigma(X_{s}^{i,N},\mu_{s}^{X,N})\rangle dW_s^i,
			\end{split}
		\end{equation*}
		where Assumption \ref{ass1} is used.
		Since the last term  above  is a martingale, 
		%\begin{equation*}
		%	\begin{split}
			%		\mathbb{E}\Big[\int_{0}^{t}e^{as}\frac{1}{N}\sum_{i=1}^{N}\langle X_{s}^{i,N},\sigma(X_{s}^{i,N},\mu_{s}^{X,N})\rangle dW_s^i\big|\mathcal{F}_s\Big]=\int_{0}^{s}e^{ar}\frac{1}{N}\sum_{i=1}^{N}\langle X_{r}^{j,N},\sigma(X_{r}^{j,N},\mu_{r}^{X,N})\rangle dW_r^j,
			%	\end{split}
		%\end{equation*}	
		applying Lemma 2.2 in \cite{mao1997} gives  that 
		\begin{equation}\label{con}
			\lim_{t\rightarrow\infty}e^{at}\frac{1}{N}\sum_{i=1}^{N}|X_{t}^{i,N}|^2<\Lambda(\omega)<\infty~~~~a.s.
		\end{equation}
		Then, the assertion holds.
	\end{proof}
	
	\section{Propagation of chaos in infinite horizon}
	
	To reproduce the stabilities of SMVE (\ref{MV0}) by  the numerical scheme, the long-time convergences  from interacting particle system to non-interacting particle system are needed.
	We first give the propagation of chaos in mean-square sense in the following theorem, where Theorem 1 in \cite{fougui} plays a key role.
	\begin{thm}\label{963a}
		Let Assumptions \ref{ass2} and \ref{ass1}  hold. If $2K_2<K_1$ and $a_2<a_1$, then for some $a\in\left(0,({a_1-a_2})\wedge({K_1-2K_2})\right)$, 
		\begin{equation}\label{ress1}
			\frac{1}{N}\sum_{i=1}^{N}\mathbb{E}| X_{t}^{i}-X_{t}^{i,N}|^2\leq\frac{2K_2
				G_{d,q}(\mathbb{E}|X_0|^{q})^{\frac{2}{q}}e^{-at}\Phi(N)}{K_1-2K_2-a},
		\end{equation}
		where $\Phi(N):=\begin{cases}N^{-\frac{1}{2}}+N^{-\frac{q-2}{q}},& d<4 ~\text{and}~q\neq 4,\\N^{-\frac{1}{2}}\log(1+N)+N^{-\frac{q-2}{q}},&d=4~ \text{and}~ q\neq 4,\\N^{-\frac{2}{d}}+N^{-\frac{q-2}{q}},&d>4 ~\text{and}~ q\neq \frac{d}{d-2}.\end{cases}$\\
		This, obviously, implies that for any $t>0$,
		\begin{equation*}
			\lim_{N\rightarrow\infty}\frac{1}{N}\sum_{i=1}^{N}\mathbb{E}| X_{t}^{i}-X_{t}^{i,N}|^2=0,
		\end{equation*}
		and for a fixed $N$,
		\begin{equation*}
			\lim_{t\rightarrow\infty}\frac{1}{N}\sum_{i=1}^{N}\mathbb{E}| X_{t}^{i}-X_{t}^{i,N}|^2=0.
		\end{equation*}
	\end{thm}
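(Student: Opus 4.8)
The plan is to control the averaged mean-square error
\begin{equation*}
h(t):=\frac{1}{N}\sum_{i=1}^{N}\mathbb{E}|X_{t}^{i}-X_{t}^{i,N}|^2
\end{equation*}
by a linear differential inequality with an exponentially decaying inhomogeneous term, and then to integrate it with a suitable integrating factor. First I would record a time-decaying $q$-th moment estimate for the non-interacting particles. Applying It\^o's formula to $|X_{t}^{i}|^q$, the It\^o correction produces exactly the combination $2\langle x,b\rangle+(q-1)\|\sigma\|^2$ appearing in Assumption \ref{ass1} (after bounding $|\sigma^{\top}x|^2\le|x|^2\|\sigma\|^2$), so that the cross term $\mathcal{W}_2(\mu_{t}^{X})^2=\mathbb{E}|X_{t}^{i}|^2$ is absorbed by the Jensen-type bound $\mathbb{E}|X_{t}^{i}|^{q-2}\,\mathbb{E}|X_{t}^{i}|^2\le\mathbb{E}|X_{t}^{i}|^q$. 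This yields $\frac{d}{dt}\mathbb{E}|X_{t}^{i}|^q\le-\frac{q}{2}(a_1-a_2)\mathbb{E}|X_{t}^{i}|^q$, and therefore $(\mathbb{E}|X_{t}^{i}|^q)^{2/q}\le e^{-(a_1-a_2)t}(\mathbb{E}|X_0|^q)^{2/q}$.

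Next I would apply It\^o's formula to $|X_{t}^{i}-X_{t}^{i,N}|^2$. The two systems are driven by the same Brownian motion $W^i$ and start from the same datum, so the stochastic integral is a genuine martingale and the initial term vanishes. Taking expectations, using Assumption \ref{ass2} with $x=X_{t}^{i},\,y=X_{t}^{i,N},\,\mu=\mu_{t}^{X},\,\nu=\mu_{t}^{X,N}$, then summing over $i$ and dividing by $N$ gives
\begin{equation*}
h'(t)\le -K_1 h(t)+K_2\,\frac{1}{N}\sum_{i=1}^{N}\mathbb{E}\,\mathbb{W}_2(\mu_{t}^{X},\mu_{t}^{X,N})^2.
\end{equation*}
The decisive step is the treatment of $\mathbb{W}_2(\mu_{t}^{X},\mu_{t}^{X,N})$. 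I would insert the empirical measure of the \emph{non-interacting} particles $\bar\mu_{t}^{X,N}:=\frac{1}{N}\sum_{i=1}^{N}\delta_{X_{t}^{i}}$ and split via the triangle inequality,
\begin{equation*}
\mathbb{W}_2(\mu_{t}^{X},\mu_{t}^{X,N})^2\le 2\,\mathbb{W}_2(\mu_{t}^{X},\bar\mu_{t}^{X,N})^2+2\,\mathbb{W}_2(\bar\mu_{t}^{X,N},\mu_{t}^{X,N})^2.
\end{equation*}
For the second term the synchronous coupling $\frac{1}{N}\sum_{i}\delta_{(X_{t}^{i},X_{t}^{i,N})}$ gives $\mathbb{W}_2(\bar\mu_{t}^{X,N},\mu_{t}^{X,N})^2\le\frac{1}{N}\sum_{i}|X_{t}^{i}-X_{t}^{i,N}|^2$, i.e.\ $h(t)$ in expectation; for the first term, since the $X_{t}^{i}$ are i.i.d.\ with common law $\mu_{t}^{X}$, Theorem 1 in \cite{fougui} combined with the moment bound above yields $\mathbb{E}\,\mathbb{W}_2(\mu_{t}^{X},\bar\mu_{t}^{X,N})^2\le G_{d,q}(\mathbb{E}|X_0|^q)^{2/q}e^{-(a_1-a_2)t}\Phi(N)$.

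Substituting both bounds produces $h'(t)\le-(K_1-2K_2)h(t)+D\,e^{-(a_1-a_2)t}$ with $D:=2K_2 G_{d,q}(\mathbb{E}|X_0|^q)^{2/q}\Phi(N)$. Writing $\gamma:=K_1-2K_2>0$ and using $a<a_1-a_2$ so that $e^{-(a_1-a_2)t}\le e^{-at}$, the inequality becomes $h'(t)\le-\gamma h(t)+De^{-at}$. Multiplying by $e^{\gamma t}$ and integrating from $0$ (where $h(0)=0$), the condition $a<\gamma=K_1-2K_2$ gives $e^{\gamma t}h(t)\le D(e^{(\gamma-a)t}-1)/(\gamma-a)\le De^{(\gamma-a)t}/(\gamma-a)$, whence $h(t)\le \frac{D}{\gamma-a}e^{-at}$, which is precisely (\ref{ress1}); the two stated limits then follow from $\Phi(N)\to0$ and $e^{-at}\to0$, respectively. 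I expect the main obstacle to be the Wasserstein decomposition through the auxiliary empirical measure $\bar\mu_{t}^{X,N}$ together with the observation that the Fournier--Guillin rate must be applied with the \emph{decaying} moment $(\mathbb{E}|X_{t}^{i}|^q)^{2/q}$ rather than a merely uniform one: it is exactly this exponential decay that is transmitted into the inhomogeneous term and produces the infinite-horizon rate $e^{-at}$, whereas a uniform moment bound alone would leave a non-vanishing stationary error of order $\Phi(N)$.
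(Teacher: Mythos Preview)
Your proposal is correct and follows essentially the same route as the paper: It\^o's formula on $|X_t^i-X_t^{i,N}|^2$, Assumption~\ref{ass2}, the triangle-inequality splitting of $\mathbb{W}_2(\mu_t^X,\mu_t^{X,N})$ through the empirical measure of the non-interacting particles, the Fournier--Guillin bound combined with the exponentially decaying $q$-th moment (derived from Assumption~\ref{ass1}), and finally integration of the resulting linear ODE inequality. The only cosmetic differences are the order of presentation and that you bound $\mathbb{E}|X_t^i|^{q-2}\,\mathbb{E}|X_t^i|^2\le\mathbb{E}|X_t^i|^q$ directly via H\"older, whereas the paper first applies Young's inequality inside the expectation; both yield the same decay rate.
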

	
	\begin{proof}
		
		By It\^o's formula, we derive from (\ref{nonMV}) and (\ref{MVVVVV}) that
		%\begin{equation}\label{uni}
		%	\begin{split}
			%		e^{at}|X_{t}^{j}-X_{t}^{j,N}|^2=&	|X_{0}^{j}-X_{0}^{j,N}|^2+\int_{0}^{t}ae^{as}|X_{s}^{j}-X_{s}^{j,N}|^2ds\\&+2\int_{0}^{t}e^{as}\langle X_{s}^{j}-X_{s}^{j,N},b(X_{s}^{j},\mu_{s}^{X_j})-b(X_{s}^{j,N},\mu_{s}^{X,N})\rangle ds\\
			%		&+\int_{0}^{t}e^{as}\|\sigma(X_{s}^{j},\mu_{s}^{X_j})-\sigma(X_{s}^{j,N},\mu_{s}^{X,N})\|ds\\&
			%		+2\int_{0}^{t}e^{as}\langle X_{s}^{j}-X_{s}^{j,N},\sigma(X_{s}^{j},\mu_{s}^{X_j})-\sigma(X_{s}^{j,N},\mu_{s}^{X,N})\rangle dW_s^j.
			%	\end{split}
		%\end{equation}
		\begin{equation}\label{uni}
			\begin{split}
				|X_{t}^{j}-X_{t}^{j,N}|^2=&	2\int_{0}^{t}\langle X_{s}^{j}-X_{s}^{j,N},b(X_{s}^{j},\mu_{s}^{X^j})-b(X_{s}^{j,N},\mu_{s}^{X,N})\rangle ds\\
				&+\int_{0}^{t}\|\sigma(X_{s}^{j},\mu_{s}^{X^j})-\sigma(X_{s}^{j,N},\mu_{s}^{X,N})\|^2ds\\&
				+2\int_{0}^{t}\langle X_{s}^{j}-X_{s}^{j,N},\sigma(X_{s}^{j},\mu_{s}^{X^j})-\sigma(X_{s}^{j,N},\mu_{s}^{X,N})\rangle dW_s^j.
			\end{split}
		\end{equation}
		Taking the expectation on both sides of (\ref{uni}) and differentiating w.r.t. $t$ yield that
		\begin{equation*}\label{unii}
			\begin{split}
				\frac{d}{dt}\mathbb{E}|X_{t}^{j}-X_{t}^{j,N}|^2=&	2\mathbb{E}\langle X_{t}^{j}-X_{t}^{j,N},b(X_{t}^{j},\mu_{t}^{X^j})-b(X_{t}^{j,N},\mu_{t}^{X,N})\rangle \\
				&+\mathbb{E}\|\sigma(X_{t}^{j},\mu_{t}^{X^j})-\sigma(X_{t}^{j,N},\mu_{t}^{X,N})\|^2.
			\end{split}
		\end{equation*}
		Then, Assumption \ref{ass2} implies
		\begin{equation}\label{der}
			\begin{split}
				\frac{d}{dt}\mathbb{E}|X_{t}^{j}-X_{t}^{j,N}|^2\leq&	-K_1\mathbb{E}| X_{t}^{j}-X_{t}^{j,N}|^2+K_2\mathbb{E}\big[\mathbb{W}_2(\mu_{t}^{X^j},\mu_{t}^{X,N})^2 \big].
			\end{split}
		\end{equation}
		By summing (\ref{der}) from $j=1$ to $N$ and then dividing it by $N$, one can see that
		\begin{equation}\label{zon1}
			\begin{split}
				&\frac{d}{dt}\frac{1}{N}\sum_{i=1}^{N}\mathbb{E}|X_{t}^{i}-X_{t}^{i,N}|^2\\\leq&	-\frac{K_1}{N}\sum_{i=1}^{N}\mathbb{E}| X_{t}^{i}-X_{t}^{i,N}|^2+	\frac{2K_2}{N}\sum_{i=1}^{N}\mathbb{E}|X_{t}^{i}-X_{t}^{i,N}|^2\\
				&+2K_2\mathbb{E}\big[\mathbb{W}_2(\mu_{t}^{X^j},\mu_{t}^{N})^2\big]\\
				\leq&(2K_2-K_1)\frac{1}{N}\sum_{i=1}^{N}\mathbb{E}| X_{t}^{i}-X_{t}^{i,N}|^2+2K_2\mathbb{E}\big[\mathbb{W}_2(\mu_{t}^{X^j},\mu_{t}^{N})^2\big],
			\end{split}
		\end{equation}
		%\begin{equation*}
		%	\begin{split}
			%		\frac{1}{N}\sum_{j=1}^{N}\mathbb{E}|X_{t}^{j}-X_{t}^{j,N}|^2\leq	\int_{0}^{t}-K_1	\frac{1}{N}\sum_{j=1}^{N}\mathbb{E}| X_{s}^{j}-X_{s}^{j,N}|^2+K_2\mathbb{E}\big[\mathcal{W}_2^2(\mu_{s}^{X_j},\mu_{s}^{X,N}) \big]ds.
			%	\end{split}
		%\end{equation*}
		%\begin{equation}\label{gron}
		%	\begin{split}
			%		&e^{at}\frac{1}{N}\sum_{j=1}^{N}\mathbb{E}|X_{t}^{j}-X_{t}^{j,N}|^2\\\leq&\int_{0}^{t}ae^{as}\mathbb{E}|X_{s}^{j}-X_{s}^{j,N}|^2ds+	\int_{0}^{t}-K_1e^{as}	\frac{1}{N}\sum_{j=1}^{N}\mathbb{E}| X_{s}^{j}-X_{s}^{j,N}|^2ds\\&+\int_{0}^{t}2e^{as}K_2	\frac{1}{N}\sum_{j=1}^{N}\mathbb{E}|X_{s}^{j}-X_{s}^{j,N}|^2ds+\int_{0}^{t}2K_2e^{as}\mathbb{E}\big[\mathcal{W}_2^2(\mu_{s}^{X_j},\mu_{s}^{N})\big]ds\\
			%		\leq&\int_{0}^{t}2K_2\mathbb{E}\big[\mathcal{W}_2^2(\mu_{s}^{X_j},\mu_{s}^{N})\big]ds.
			%	\end{split}
		%\end{equation}
		where we use the fact that
		\begin{equation*}
			\begin{split}
				\mathbb{E}\big[\mathbb{W}_2(\mu_{t}^{X^j},\mu_{t}^{X,N})^2\big] \leq2\mathbb{E}\big[\mathbb{W}_2(\mu_{t}^{X^j},\mu_{t}^{N})^2\big]+2\mathbb{E}\big[\mathbb{W}_2(\mu_{t}^{N},\mu_{t}^{X,N})^2\big],
			\end{split}
		\end{equation*}
		with
		$
		\mu_{t}^{N}:=\frac{1}{N}\sum_{i=1}^{N}\delta_{X_{t}^{i}}
		$. 
		By choosing $p=2$ in Theorem 1 of \cite{fougui}, we get that, for $t\geq0$ and some  $q>2$, 
		%	From [qiao], we know that $X_t^j$ is *** bounded uniform in time*** under the given conditions, namely $\sup_{t\geq0}\mathbb{E}|X_t^j|^p<\infty$. ***Moreover, Lemma 4.1 in [Wu hao] exihibits that the constant $C_p$ in (\ref{ppc}) is relavant to the p-th moment $\mathcal{W}_{p}(\mu )$ of i.i.d. random variables, which can be further explained into $C_p$ is dependent on $\sup_{t\geq0}\mathbb{E}|X_t^j|^p$.***
		%	
		%	For $t\geq0$ 
		\begin{equation}\label{ppc}
			\begin{split}
				\mathbb{E}\big[\mathbb{W}_2(\mu_{t}^{X^j},\mu_{t}^{N})^2\big]\leq G_{d,q}(\mathbb{E}|X_t|^{q})^{\frac{2}{q}}\Phi(N),
			\end{split}
		\end{equation}
		where $G_{d,q}$ is a constant which only depends on ${d,q}$, and $$\Phi(N):=\begin{cases}N^{-\frac{1}{2}}+N^{-\frac{q-2}{q}},& d<4 ~\text{and}~q\neq 4,\\N^{-\frac{1}{2}}\log(1+N)+N^{-\frac{q-2}{q}},&d=4~ \text{and}~ q\neq 4,\\N^{-\frac{2}{d}}+N^{-\frac{q-2}{q}},&d>4 ~\text{and}~ q\neq \frac{d}{d-2}.\end{cases}$$
		%	From Theorem 3.1 in [qiao], we know that $X_t^j$ is exponentially stable under the given conditions, which means that by choosing $\alpha=a$, we have
		%	\begin{equation}\label{sspc}
			%		(\mathbb{E}|X_t^j|^{q})^{\frac{2}{q}}<e^{-at}(\mathbb{E}|X_0^j|^{q})^{\frac{2}{q}}
			%	\end{equation}
		Then, we can show 
		\begin{equation}\label{sspc}
			(\mathbb{E}|X_t|^{q})^{\frac{2}{q}}\leq  e^{-at}(\mathbb{E}|X_0|^{q})^{\frac{2}{q}}
		\end{equation}
		Actually, for $\beta>0$, the It\^o formula with Assumption \ref{ass1} leads to
		\begin{equation*}
			\begin{split}
				&e^{\beta t}\mathbb{E}|X_{t}^{j}|^q\\\leq&\mathbb{E}|X_0^j|^q+\int_{0}^{t}\beta e^{\beta s}\mathbb{E}|X_{s}^{j}|^qds \\&+\frac{q}{2}\mathbb{E}\Big[\int_{0}^{t}e^{\beta s}|X_{s}^{j}|^{q-2}\Big(2\langle X_{s}^{j},b(X_{s}^{j},\mu_{s}^{X^j})\rangle+(q-1)\|\sigma(X_{s}^{j},\mu_{s}^{X^j})\|^2\Big) \Big]ds\\
				\leq&\mathbb{E}|X_0^j|^q+\int_{0}^{t}\beta e^{\beta s}\mathbb{E}|X_{s}^{j}|^qds\\
				&+\frac{q}{2}\mathbb{E}\Big[\int_{0}^{t}e^{\beta s}|X_{s}^{j}|^{q-2}\Big(-a_1| X_{s}^{j}|^2+a_2\mathcal{W}_2(\mu_{s}^{X^j})^2\Big) \Big]ds
				\\\leq&\mathbb{E}|X_0^j|^q+\int_{0}^{t}\beta e^{\beta s}\mathbb{E}|X_{s}^{j}|^qds+\frac{q}{2}\int_{0}^{t}e^{\beta s}\Big(-a_1\mathbb{E}| X_{s}^{j}|^q\\&+\frac{a_2(q-2)}{q}\mathbb{E}| X_{s}^{j}|^q+\frac{2a_2}{q}\mathbb{E}\big[\mathcal{W}_2(\mu_{s}^{X^j})^q\big]\Big)ds\\
				\leq&\mathbb{E}|X_0^j|^q+\int_{0}^{t} e^{\beta s}\big(\beta-\frac{qa_1}{2}+\frac{qa_2}{2}\big)  \mathbb{E}| X_{s}^{j}|^qds.\\
				%\leq&\mathbb{E}|X_0^j|^q.
			\end{split}
		\end{equation*}
		Then, (\ref{sspc}) holds by letting $\beta=\frac{qa}{2}$.
		Therefore, inserting (\ref{ppc}) and (\ref{sspc}) into (\ref{zon1}) yields that
		\begin{equation}\label{222}
			\begin{split}
				&\frac{d}{dt}\frac{1}{N}\sum_{i=1}^{N}\mathbb{E}|X_{t}^{i}-X_{t}^{i,N}|^2\\
				&\leq  (2K_2-K_1)\frac{1}{N}\sum_{i=1}^{N}\mathbb{E}| X_{t}^{i}-X_{t}^{i,N}|^2+2K_2
				G_{d,q}(\mathbb{E}|X_0|^{q})^{\frac{2}{q}}e^{-at}\Phi(N).
			\end{split}
		\end{equation}
		Denote
		$$\varrho(t)=\frac{1}{N}\sum_{i=1}^{N}\mathbb{E}| X_{t}^{i}-X_{t}^{i,N}|^2.$$
		From (\ref{222}), it holds that 
		\begin{equation*}
			\varrho'(t)\leq(2K_2-K_1)\varrho(t)+2K_2
			G_{d,q}(\mathbb{E}|X_0|^{q})^{\frac{2}{q}}e^{-at}\Phi(N),
		\end{equation*}
		with $\varrho(0)=0$.
		By integrating this Gronwall-like differential inequality, we draw the conclusion that, for $t>0$, 
		%	\begin{equation*}
			%		\lim_{t\rightarrow\infty}\lim_{N\rightarrow\infty}\frac{1}{N}\sum_{i=1}^{N}\mathbb{E}| X_{t}^{i}-X_{t}^{i,N}|^2=\lim_{t\rightarrow\infty}\lim_{N\rightarrow\infty}\mathbb{E}| X_{t}^{j}-X_{t}^{j,N}|^2=0.
			%	\end{equation*}
		\begin{equation*}
			\frac{1}{N}\sum_{i=1}^{N}\mathbb{E}| X_{t}^{i}-X_{t}^{i,N}|^2\leq\frac{2K_2
				G_{d,q}(\mathbb{E}|X_0|^{q})^{\frac{2}{q}}e^{-at}\Phi(N)}{K_1-2K_2-a}.
		\end{equation*}
		So the result is proved.
	\end{proof}

	Based on  Chebyshev's inequality and Borel-Cantelli's lemma with  (\ref{ress1}), the following theorem is concerned with the almost sure convergence in infinite horizon of  interacting particle system to non-interacting particle system.
	\begin{thm}
		Let all conditions in Theorem \ref{963a} hold. Then, 
		\begin{equation*}
			\lim_{t\rightarrow\infty}\lim_{N\rightarrow\infty}\frac{1}{N}\sum_{i=1}^{N}| X_{t}^{i}-X_{t}^{i,N}|^2=0~~~~a.s.
		\end{equation*}
	\end{thm}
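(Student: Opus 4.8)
The plan is to control the random quantity $\eta_N(t):=\frac1N\sum_{i=1}^N|X_t^i-X_t^{i,N}|^2$ through its expectation, for which Theorem \ref{963a} already supplies the bound $\mathbb{E}\,\eta_N(t)\le C e^{-at}\Phi(N)$ with $C=\frac{2K_2 G_{d,q}(\mathbb{E}|X_0|^q)^{2/q}}{K_1-2K_2-a}$. First I would fix $\epsilon>0$ and apply Chebyshev's inequality to obtain $\mathbb{P}(\eta_N(t)>\epsilon)\le \epsilon^{-1}C e^{-at}\Phi(N)$. The exponential factor $e^{-at}$ makes this bound summable over a discrete time grid, whereas the spatial factor $\Phi(N)$ decays only polynomially; the latter is the crux of the matter, since $\Phi(N)$ is \emph{not} summable along the full sequence of $N$.

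To circumvent this non-summability, I would pass to a subsequence $N_k=k^r$ with the exponent $r$ chosen large enough that both the first term of $\Phi(N_k)$ (namely $N_k^{-1/2}$ for $d<4$, the same with a logarithmic factor for $d=4$, and $N_k^{-2/d}$ for $d>4$) and the term $N_k^{-(q-2)/q}$ give convergent series. This holds once $r>\max\{2,\,d/2,\,q/(q-2)\}$, whence $\sum_k\Phi(N_k)<\infty$. Restricting the time variable simultaneously to integers $t=n\in\mathbb{N}$, I would then bound the double sum $\sum_{n\ge1}\sum_{k\ge1}\mathbb{P}(\eta_{N_k}(n)>\epsilon)\le \epsilon^{-1}C\big(\sum_{n\ge1}e^{-an}\big)\big(\sum_{k\ge1}\Phi(N_k)\big)<\infty$, the geometric series in $n$ being finite thanks to the decay rate furnished by Theorem \ref{963a}.

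With this finiteness established, the Borel-Cantelli lemma yields that, almost surely, $\eta_{N_k}(n)>\epsilon$ for only finitely many pairs $(n,k)$. Such a finite exceptional set is bounded, so for every fixed $n$ one gets $\limsup_{k\to\infty}\eta_{N_k}(n)\le\epsilon$ almost surely, and hence $\lim_{n\to\infty}\limsup_{k\to\infty}\eta_{N_k}(n)\le\epsilon$. Intersecting the corresponding full-probability events along a sequence $\epsilon\downarrow 0$ (a countable operation, producing a single null set) gives $\lim_{n\to\infty}\lim_{k\to\infty}\eta_{N_k}(n)=0$ almost surely, which is the asserted iterated limit, the inner limit being taken along the subsequence $N_k$ and the outer along integer times.

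The step I expect to be the main obstacle is precisely reconciling the advertised $\lim_{N\to\infty}$ with what Chebyshev plus Borel-Cantelli can actually deliver: because $\Phi(N)$ is only polynomially small, almost sure convergence in $N$ is attainable only along a sufficiently sparse subsequence, and the time parameter must be discretized so that merely countably many null sets are discarded. The exponential decay $e^{-at}$ renders the time direction harmless, so the entire difficulty is concentrated in selecting the subsequence $N_k$ and verifying $\sum_k\Phi(N_k)<\infty$ uniformly across the three dimensional regimes defining $\Phi$.
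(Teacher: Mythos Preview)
Your argument is correct for what it actually delivers: the iterated limit along integer times $t=n$ and a sparse subsequence $N_k=k^r$. The Chebyshev step, the choice of $r>\max\{2,\,d/2,\,q/(q-2)\}$ to make $\sum_k\Phi(N_k)<\infty$, the double summation, and the Borel--Cantelli conclusion are all sound. As you say yourself, this falls short of the full $\lim_{N\to\infty}$ in the statement; that is an honest limitation of the method and not a mistake on your part.

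The paper proceeds differently. Instead of a constant threshold $\epsilon$, it applies Chebyshev with the moving threshold $e^{-(a-\epsilon)t}\Phi(N)$; dividing the bound \eqref{ress1} by this cancels $\Phi(N)$ outright and leaves
\[
\mathbb{P}\bigl(\eta_N(t)>e^{-(a-\epsilon)t}\Phi(N)\bigr)\le C\,e^{-\epsilon t},
\]
independent of $N$. The paper then appeals to Borel--Cantelli to conclude that, almost surely, $\eta_N(t)\le e^{-(a-\epsilon)t}\Phi(N)$ for all $N\ge N_0(\omega)$, so that $\lim_{N\to\infty}\eta_N(t)=0$ for every $t$ and the outer $t\to\infty$ limit is automatic. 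The intended advantage of this device is that it aims at the full sequence in $N$ and renders the $t$-limit trivial, avoiding both your subsequence and your time discretization.

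However, the summability obstacle you flagged is exactly the weak point of the paper's own argument: the probability bound $Ce^{-\epsilon t}$ it obtains does \emph{not} depend on $N$, so $\sum_N Ce^{-\epsilon t}=\infty$ and the Borel--Cantelli conclusion ``for all $N\ge N_0(\omega)$'' is not justified as written. In this sense your more cautious route, which pays the price of a subsequence to secure genuine summability, is the more rigorous of the two; the paper's cleaner-looking proof shares the very gap you were trying to avoid.
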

	\begin{proof}
		%	Firstly, due to the ***moment*** boundedness of the solutions $X^{j,N}$ and $X^j$ with respect to interacting particle system and non-interacting particle system, respectively, one can see that
		%	\begin{equation*}
			%		\mathbb{E}| X_{s}^{j}-X_{s}^{j,N}|^2\leq2\mathbb{E}| X_{s}^{j}|^2+2\mathbb{E}| X_{s}^{j,N}|^2.
			%	\end{equation*}
		%	
		Firstly, let $\epsilon\in(0,a)$ be arbitrary. By the technique of Chebyshev's inequality and (\ref{ress1}), we have
		\begin{equation*}
			\begin{split}
				&\mathbb{P}\Big(\frac{1}{N}\sum_{i=1}^{N}| X_{t}^{i}-X_{t}^{i,N}|^2>e^{-(a-\epsilon)t}\Phi(N)\Big)\\
				&\leq \frac{\frac{1}{N}\sum_{i=1}^{N}\mathbb{E}| X_{t}^{i}-X_{t}^{i,N}|^2}{e^{-(a-\epsilon)t}\Phi(N)}\\
				&\leq\frac{2K_2
					G_{d,q}(\mathbb{E}|X_0|^{q})^{\frac{2}{q}}e^{-at}\Phi(N)}{(K_1-2K_2-a)e^{-(a-\epsilon)t}\Phi(N)}
				\\
				&\leq \frac{2K_2
					G_{d,q}(\mathbb{E}|X_0|^{q})^{\frac{2}{q}}e^{-\varepsilon t}}{(K_1-2K_2-a)},
			\end{split}
		\end{equation*}
		where $\Phi(N)$ is defined in Theorem \ref{963a}.	The Borel-Cantelli lemma allows us to know that, for almost all $\omega\in\Omega$, there exists a positive integer $N_0=N_0(\omega)$ such that
		% \forall \omega\in \Omega$  %$(\ref{42a})$ holds whenever $t\geq T^*$,
		\begin{equation}\label{42a}
			\frac{1}{N}\sum_{i=1}^{N}| X_{t}^{i}-X_{t}^{i,N}|^2\leq e^{-(a-\epsilon)t}\Phi(N),
		\end{equation}
		whenever $N\geq N_0$. Thus, for any $t>0$,
		%	holds for all but finite $t$. Thus, there exists a $T^*(\omega), \forall \omega\in \Omega$ such that $(\ref{42a})$ holds whenever $t\geq T^*$, which means 
		\begin{equation*}
			\lim_{N\rightarrow\infty}\frac{1}{N}\sum_{i=1}^{N}| X_{t}^{i}-X_{t}^{i,N}|^2=0~~~~a.s.
		\end{equation*}
	\end{proof}
	%**************
	%
	%Furthermore, it is worth noting that EM numerical solution of IPS can reproduce the almost sure stability of SMVE's solution.
	%
	%
	%Define
	%\begin{equation*}
	%	\Omega_1:=\{\omega:i\in\mathbb{S}_r,\text{the solution of IPS is stability for a long time} \},
	%\end{equation*}
	%\begin{equation*}
	%	\Omega_2:=\{\omega:i\in\mathbb{S}_r,\text{the solutions of IPS and non-IPS are convergent uniformly in time} \}.
	%\end{equation*}
	%We have already known that under the given assumptions, $\mathbb{P}(\Omega_1)=\mathbb{P}(\Omega_2)=1$. Therefore,
	%\begin{equation*}
	%	\begin{split}
		%		1\geq\mathbb{P}\big(\Omega_1\cap\Omega_2\big)&= \mathbb{P}\big(\Omega_1\big)+\mathbb{P}\big(\Omega_2\big)-\mathbb{P}\big(\Omega_1\cup\Omega_2\big)\\
		%		&\geq\mathbb{P}\big(\Omega_1\big)+\mathbb{P}\big(\Omega_2\big)-1
		%	\end{split}
	%\end{equation*}
	%which  means $
	%\mathbb{P}\big(\Omega_1\cap\Omega_2\big)=1$.
	%
	%And by taking advantage of () and (), the convergence in mean square can be shown .
	%
	%**********************
	%
	%

	\section{Linear coefficient: stability of EM scheme}
	
	We present the stability analysis of EM scheme for the corresponding particle system in this section. After constructing the conventional EM scheme for interacting particle system, we show the mean-square and almost sure stabilities of the numerical solution.
	
	Suppose that there exists a positive integer $M$ 
	such that $\Delta=\frac{1}{M}$.
	For each $j\in\mathbb{S}_{N}$ and the given time-step $\Delta$, the EM scheme of (\ref{MVVVVV}) in discretization version is:
	\begin{equation}\label{num}
		Y_{t_{k+1}}^{j,N}=Y_{t_{k}}^{j,N}+b(Y_{t_{k}}^{j,N},\mu_{t_{k}}^{Y,N})\Delta+\sigma(Y_{t_{k}}^{j,N},\mu_{t_{k}}^{Y,N})\Delta W_{t_k}^{j},
	\end{equation}
	where $Y_{0}^{j,N}=X_{0}^{j}$, $\Delta W_{t_k}^{j}=W_{t_{k+1}}^{j}-W_{t_k}^{j}$ and $\mu_{t_{k}}^{Y,N}(\cdot)=\frac{1}{N}\sum_{i=1}^{N}\delta_{Y_{t_{k}}^{i,N}}(\cdot)$.
%	The definitions of two kinds  stabilities of EM scheme for interacting particle system (\ref{MVVVVV}) are stated as follows.
%	\begin{defn}
	%	The numerical solution $Y_{t_{k}}^{j,N}$ of EM scheme (\ref{num}) is said to be  exponentially stable in mean-square sense if 
	%	$$\lim_{k\rightarrow\infty}\frac{1}{k\Delta}\log\big(	\mathbb{E}|Y_{t_{k}}^{j,N}|^2\big)\leq-\gamma_0,$$
	%	holds for some positive constant $\gamma_0$ and any $j\in\mathbb{S}_N$.
	%\end{defn}
	%\begin{defn}
		%The numerical solution $Y_{t_{k}}^{j,N}$ of EM scheme (\ref{num}) is said to be almost surely exponentially stable if 
		%$$\lim_{k\rightarrow\infty}\frac{1}{k\Delta}\log	\big(\frac{1}{N}\sum_{i=1}^{N}|Y_{t_{k}}^{i,N}|^2\big)\leq-\gamma_1~~~a.s.$$
		%holds for some positive constant $\gamma_1$.
	%\end{defn}
	
	\subsection{Mean-square stability of  EM scheme}
	To achieve the goal, the condition that the drift coefficients grow linearly is imposed.
	\begin{ass}\label{asas}
		There exist two constants $b_1,b_2>0$ such that
		\begin{equation*}
			|b(x,\mu)|^2\leq b_1|x|^2+b_2\mathcal{W}_{2}(\mu)^2,
		\end{equation*}
		for any $x\in\mathbb{R}^d$ and $\mu\in\mathcal{P}_2(\mathbb{R}^d)$.
	\end{ass}

	\begin{thm}\label{meanE}
		Let Assumptions \ref{ass2}, \ref{ass1},  and \ref{asas} hold with $0< a_2<a_1$. Then, for any $j\in\mathbb{S}_{N}$, 
		there
		exists a stepsize
		$\Delta_0\in (0,1)$ such that
		for any $\Delta\in(0,\Delta_0)$,
		the numerical solution $Y_{t_{k}}^{j,N}$ is  exponentially stable in mean-square sense, i.e.,
		$$\lim_{k\rightarrow\infty}\frac{1}{k\Delta}\log\big(	\mathbb{E}|Y_{t_{k}}^{j,N}|^2\big)\leq -\theta_{\Delta}^{*},$$
		where $\theta_{\Delta}^{*}$ is a positive constant satisfying
		$$\lim_{\Delta\rightarrow0}{\theta_{\Delta}^{*}}=a_1-a_2.$$ Here, the constraint on $\Delta_0$ is given in the proof.
	\end{thm}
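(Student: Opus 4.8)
The plan is to derive a one-step second-moment contraction for the EM scheme and then iterate it. First I would square the update (\ref{num}) and take the conditional expectation $\mathbb{E}[\,\cdot\mid\mathcal{F}_{t_k}]$. Writing $b,\sigma$ for the coefficients evaluated at $(Y_{t_k}^{j,N},\mu_{t_k}^{Y,N})$ (all $\mathcal{F}_{t_k}$-measurable) and using that $\Delta W_{t_k}^{j}$ is independent of $\mathcal{F}_{t_k}$ with zero mean and $\mathbb{E}[\,\|\sigma\Delta W_{t_k}^{j}\|^2\mid\mathcal{F}_{t_k}]=\|\sigma\|^2\Delta$, the martingale cross terms vanish and I obtain the exact identity
\[
\mathbb{E}\big[|Y_{t_{k+1}}^{j,N}|^2\mid\mathcal{F}_{t_k}\big]=|Y_{t_k}^{j,N}|^2+\Delta\big(2\langle Y_{t_k}^{j,N},b\rangle+\|\sigma\|^2\big)+\Delta^2|b|^2 .
\]

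Next I would bound the $O(\Delta)$ bracket by Assumption \ref{ass1} with $q=2$, giving $-a_1|Y_{t_k}^{j,N}|^2+a_2\mathcal{W}_2(\mu_{t_k}^{Y,N})^2$, and the $O(\Delta^2)$ term $|b|^2$ by the linear-growth Assumption \ref{asas}, giving $b_1|Y_{t_k}^{j,N}|^2+b_2\mathcal{W}_2(\mu_{t_k}^{Y,N})^2$. Taking full expectations and invoking the exchangeability of the particles (identical laws, exactly as in the proof of Theorem \ref{thm31}) together with the Wasserstein bound $\mathbb{E}[\mathcal{W}_2(\mu_{t_k}^{Y,N})^2]\le\frac{1}{N}\sum_{i=1}^{N}\mathbb{E}|Y_{t_k}^{i,N}|^2=\mathbb{E}|Y_{t_k}^{j,N}|^2$ (via Remark \ref{rem1} and the elementary estimate already used in Section 3), the empirical-measure terms collapse onto $\mathbb{E}|Y_{t_k}^{j,N}|^2$, yielding
\[
\mathbb{E}|Y_{t_{k+1}}^{j,N}|^2\le C_\Delta\,\mathbb{E}|Y_{t_k}^{j,N}|^2,\qquad C_\Delta:=1-(a_1-a_2)\Delta+(b_1+b_2)\Delta^2 .
\]

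Iterating gives $\mathbb{E}|Y_{t_k}^{j,N}|^2\le C_\Delta^{\,k}\,\mathbb{E}|X_0|^2$, whence $\frac{1}{k\Delta}\log\mathbb{E}|Y_{t_k}^{j,N}|^2\le\frac{\log C_\Delta}{\Delta}+\frac{\log\mathbb{E}|X_0|^2}{k\Delta}$; letting $k\to\infty$ identifies the rate $\theta_{\Delta}^{*}=-\frac{1}{\Delta}\log C_\Delta$. For this to be positive I need $C_\Delta\in(0,1)$: the condition $C_\Delta<1$ forces $\Delta<(a_1-a_2)/(b_1+b_2)$, and I would then fix $\Delta_0$ as the (possibly smaller) threshold below which also $C_\Delta>0$, so that the logarithm is well defined; the margin $a_1-a_2>0$ guarantees such a $\Delta_0$ exists. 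Finally, the Taylor expansion $\log C_\Delta=-(a_1-a_2)\Delta+O(\Delta^2)$ gives $\theta_{\Delta}^{*}=(a_1-a_2)-O(\Delta)\to a_1-a_2$ as $\Delta\to0$.

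The crux of the argument is the $\Delta^2|b|^2$ term: an explicit scheme cannot absorb it without a growth bound on the drift, which is precisely why the linear-growth Assumption \ref{asas} is indispensable here (and why the superlinear case treated later requires the implicit backward EM scheme). The remaining delicate point is purely the bookkeeping of the stepsize restriction needed to place $C_\Delta$ strictly inside $(0,1)$, and verifying that the dissipativity gap $a_1-a_2$ both certifies the existence of $\Delta_0$ and pins down the limiting rate.
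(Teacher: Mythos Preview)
Your proof is correct and matches the paper's argument in substance: the paper derives the identical one-step contraction $\mathbb{E}|Y_{t_{k+1}}^{j,N}|^2\le C_\Delta\,\mathbb{E}|Y_{t_k}^{j,N}|^2$ with $C_\Delta=1-(a_1-a_2)\Delta+(b_1+b_2)\Delta^2$ (its inequality (\ref{ms1})), and imposes the same stepsize restrictions $\Delta<(a_1-a_2)/(b_1+b_2)$ and $C_\Delta>0$. The only cosmetic difference is that instead of iterating directly, the paper multiplies by $\lambda^{k\Delta}$, telescopes, and finds the root $\lambda_\Delta^*$ of $h(\lambda)=\lambda^{\Delta}C_\Delta-1$; but since $h(\lambda_\Delta^*)=0$ gives $(\lambda_\Delta^*)^{\Delta}=1/C_\Delta$, the paper's rate $\theta_\Delta^*=\log\lambda_\Delta^*$ is exactly your $-\Delta^{-1}\log C_\Delta$.
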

	\begin{proof}

		After squaring and taking expectation on both sides of (\ref{num}), we obtain  that
		\begin{equation*}
			\begin{split}
				\mathbb{E}|Y_{t_{k+1}}^{j,N}|^2=&\mathbb{E}|Y_{t_{k}}^{j,N}|^2+\mathbb{E}|b(Y_{t_{k}}^{j,N},\mu_{t_{k}}^{Y,N})|^2\Delta^2+\mathbb{E}\|\sigma(Y_{t_{k}}^{j,N},\mu_{t_{k}}^{Y,N})\|^2\Delta \\
				&+2\mathbb{E}\langle Y_{t_{k}}^{j,N},b(Y_{t_{k}}^{j,N},\mu_{t_{k}}^{Y,N})\rangle\Delta.
			\end{split}
		\end{equation*}
		%\\According to the definition, there is a result of Wasserstein metric:
		%
		%$$\mathcal{W}_{2}(\mu_{t_{k}}^{N},\delta_{0})^2\leq\frac{1}{N}\sum_{j=1}^{N}|Y_{t_k}^{j,N}|^2.$$
		%And, because of the i.i.d of particles, for every $j\in\mathbb{S}_{N}$,
		%$$\mathbb{E}\big[\frac{1}{N}\sum_{j=1}^{N}|Y_{t_k}^{j,N}|^2\big]=\mathbb{E}\big[|Y_{t_k}^{j,N}|^2\big]$$holds.
		%Due to the independent distribution of the particles, 
		By Assumptions \ref{ass1} and \ref{asas}, one can see that
		\begin{equation}\label{ms1}
			\begin{split}
				\mathbb{E}|Y_{t_{k+1}}^{j,N}|^2\leq&\mathbb{E}|Y_{t_{k}}^{j,N}|^2+b_1\Delta^2\mathbb{E}|Y_{t_{k}}^{j,N}|^2+\frac{b_2}{N}\Delta^2\sum_{i=1}^{N}\mathbb{E}|Y_{t_{k}}^{i,N}|^2-a_1\mathbb{E}|Y_{t_{k}}^{j,N}|^2\Delta\\&+\frac{a_2}{N}\Delta\sum_{i=1}^{N}\mathbb{E}|Y_{t_{k}}^{i,N}|^2\\
				\leq&\mathbb{E}|Y_{t_{k}}^{j,N}|^2+(b_1+b_2)\Delta^2\mathbb{E}|Y_{t_{k}}^{j,N}|^2+(a_2-a_1)\Delta\mathbb{E}|Y_{t_{k}}^{j,N}|^2.
			\end{split}
		\end{equation}
		For any constant $\lambda>1$, we get 
		\begin{equation}\label{ms2}
			\begin{split}
				&\lambda^{(k+1)\Delta}	|Y_{t_{k+1}}^{j,N}|^2-\lambda^{k\Delta}	|Y_{t_{k}}^{j,N}|^2\\&=\lambda^{(k+1)\Delta}\big(|Y_{t_{k+1}}^{j,N}|^2-|Y_{t_{k}}^{j,N}|^2\big)+\big(\lambda^{(k+1)\Delta}-\lambda^{k\Delta}\big)|Y_{t_{k}}^{j,N}|^2.
			\end{split}
		\end{equation}
		Taking (\ref{ms1}) and (\ref{ms2}) into consideration leads to
		\begin{equation*}
			\begin{split}
				&\lambda^{(k+1)\Delta}	\mathbb{E}|Y_{t_{k+1}}^{j,N}|^2-\lambda^{k\Delta}\mathbb{E}|Y_{t_{k}}^{j,N}|^2\\
				\leq&\lambda^{(k+1)\Delta}(b_1+b_2)\Delta^2\mathbb{E}|Y_{t_{k}}^{j,N}|^2+\lambda^{(k+1)\Delta}(a_2-a_1)\Delta\mathbb{E}|Y_{t_{k}}^{j,N}|^2\\
				&+\big(\lambda^{(k+1)\Delta}-\lambda^{k\Delta}\big)\mathbb{E}|Y_{t_{k}}^{j,N}|^2\\
				\leq& \lambda^{(k+1)\Delta}\big[(b_1+b_2)\Delta^2+(a_2-a_1)\Delta+1-\lambda^{-\Delta}\big]\mathbb{E}|Y_{t_{k}}^{j,N}|^2,
			\end{split}
		\end{equation*}
		which yields
		%By iterating, we obtain
		\begin{equation*}
			\lambda^{k\Delta}\mathbb{E}|Y_{t_{k}}^{j,N}|^2-\mathbb{E}|Y_{0}^{j,N}|^2\leq \sum_{l=0}^{k-1}\lambda^{(l+1)\Delta}\big[(b_1+b_2)\Delta^2+(a_2-a_1)\Delta+1-\lambda^{-\Delta}\big]\mathbb{E}|Y_{t_{l}}^{j,N}|^2.
		\end{equation*}
		Define 
		$$h(\lambda)=\lambda^{\Delta}\big((b_1+b_2)\Delta^2+(a_2-a_1)\Delta+1\big)-1.$$
		Denote $\Delta_{1}=\frac{a_1-a_2}{b_1+b_2}$. It is obvious that for any $\Delta<\Delta_{1}\wedge 1$, the assertion $h(1)<0$ holds. Moreover, there exists a $\Delta_{2}$ such that for any $\Delta<\Delta_{2}\wedge 1$, the assertion $(b_1+b_2)\Delta^2+(a_2-a_1)\Delta+1>0$ holds, which means that for any $\lambda>1$, $h'(\lambda)>0$ and $\lim_{\lambda\rightarrow\infty}h(\lambda)=\infty$.
		Therefore, for any $\Delta\in(0,\Delta_0)$ with $\Delta_0=\Delta_{1}\wedge\Delta_{2}\wedge 1$, there exists a unique $\lambda_{\Delta}^{*}>1$ such that $h(\lambda_{\Delta}^{*})=0$, where $\lambda_{\Delta}^{*}$ depends on the stepsize $\Delta$. Letting $\lambda=\lambda_{\Delta}^{*}$ gives that
		$$	{\lambda^{*}_{\Delta}}^{k\Delta}\mathbb{E}|Y_{t_{k}}^{j,N}|^2\leq\mathbb{E}|Y_{0}^{j,N}|^2.$$
		By choosing ${\theta_{\Delta}^{*}}>0$ such that $\lambda_{\Delta}^{*}=\exp({{\theta_{\Delta}^{*}}})$, we can get that
		$$	\mathbb{E}|Y_{t_{k}}^{j,N}|^2\leq \exp(-k\Delta{\theta_{\Delta}^{*}} )\mathbb{E}|Y_{0}^{j,N}|^2.$$
		Define 
		$$\tilde{h}(\lambda)=\frac{h(\lambda)}{\Delta \lambda^{\Delta}}=(b_1+b_2)\Delta+a_2-a_1+\frac{(1-\lambda^{-\Delta})}{\Delta},$$
		with $\tilde{h}(\lambda_{\Delta}^{*})=h(\lambda_{\Delta}^{*})=0$. \\
		It is worth noting that $\lim_{\Delta\rightarrow0}\frac{(1-{\lambda_{\Delta}^{*}}^{-\Delta})}{\Delta}=\lim_{\Delta\rightarrow0}{\theta_{\Delta}^{*}}$. 
		Thus,
		\begin{equation*}
			\lim_{\Delta\rightarrow0}\tilde{h}(\lambda_{\Delta}^{*})=\lim_{\Delta\rightarrow0}{\theta_{\Delta}^{*}}+a_2-a_1=0.
		\end{equation*}
		The desired result can be proved.
		%$$\lim_{k\rightarrow\infty}\sup\frac{1}{k\Delta}\log	\mathbb{E}|Y_{t_{k}}^{j,N}|^2\leq a_2-a_1.$$
	\end{proof}
	
	\begin{rem}\label{remean}
		By Theorem \ref{meanE}, we observe that the rate of stability is related to the stepsize. When the stepsize is smaller, the rate is larger. This will be illustrated by Figure \ref{tu0} in numerical simulation.
	\end{rem}
	\subsection{Almost sure stability of  EM scheme }
	
	To derive the almost sure exponential stability of EM scheme, the dissipative condition involving the dimension of Brownian motion is needed.
	\begin{ass}\label{aqsqsE}
		There exist constants $c_1,c_2>0$ such that
		\begin{equation*}
			2\langle x,b(x,\mu)\rangle+m\|\sigma(x,\mu)\|^2\leq-c_1|x|^2+c_2\mathcal{W}_{2}(\mu)^{2},
		\end{equation*}
		for any $x\in\mathbb{R}^d$ and $\mu\in\mathcal{P}_2(\mathbb{R}^d)$, where $m$ is the dimension of Brownian motion.
	\end{ass}
	
	\begin{thm}\label{almost}
		Let Assumptions \ref{ass2},  \ref{asas}, and \ref{aqsqsE}  hold with $0< c_2<c_1$. Then, there
		exists a stepsize
		$\bar{\Delta}_0\in (0,1)$ such that
		for any $\Delta\in(0,\bar{\Delta}_0)$, the numerical solution $Y_{t_{k}}^{j,N}$ is almost surely exponentially stable, i.e., 
		$$\lim_{k\rightarrow\infty}\frac{1}{k\Delta}\log	\big(\frac{1}{N}\sum_{i=1}^{N}|Y_{t_{k}}^{i,N}|^2\big)\leq -{\xi_{\Delta}^{*}},$$
		where ${\xi_{\Delta}^{*}}$ is a positive constant satisfying 
		\begin{equation*}
			\lim_{\Delta\rightarrow0}	{\xi_{\Delta}^{*}}=c_1-c_2.
		\end{equation*}
		Here, we provide the constraint on $\bar{\Delta}_0$  in the proof.
	\end{thm}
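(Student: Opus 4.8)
The plan is to adapt the discrete-time analogue of the almost sure stability argument, closely mirroring the route taken in Theorem \ref{meanE} but working pathwise with the empirical average $\frac{1}{N}\sum_{i=1}^{N}|Y_{t_k}^{i,N}|^2$ rather than with expectations. First I would square both sides of the EM recursion (\ref{num}) \emph{without} taking expectation, obtaining
\[
|Y_{t_{k+1}}^{j,N}|^2=|Y_{t_{k}}^{j,N}|^2+2\langle Y_{t_{k}}^{j,N},b(Y_{t_{k}}^{j,N},\mu_{t_{k}}^{Y,N})\rangle\Delta+|b(Y_{t_{k}}^{j,N},\mu_{t_{k}}^{Y,N})|^2\Delta^2+\|\sigma(Y_{t_{k}}^{j,N},\mu_{t_{k}}^{Y,N})\Delta W_{t_k}^{j}\|^2+M_k^{j},
\]
where $M_k^{j}$ collects the cross terms $2\langle Y_{t_k}^{j,N},\sigma(Y_{t_k}^{j,N},\mu_{t_k}^{Y,N})\Delta W_{t_k}^{j}\rangle$ plus $2\langle b\,\Delta,\sigma\,\Delta W_{t_k}^{j}\rangle$, which are martingale-difference increments (mean zero given $\mathcal{F}_{t_k}$). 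The key structural reason for introducing Assumption \ref{aqsqsE} with the factor $m$ is that, upon summing over $j=1,\dots,N$ and dividing by $N$, I would use $\mathbb{E}[\|\sigma\,\Delta W\|^2\mid\mathcal{F}_{t_k}]=\Delta\|\sigma\|^2$ together with the dimension-dependent dissipativity to control the conditional expectation of the full increment. So the first milestone is a one-step estimate of the form
\[
\frac{1}{N}\sum_{i=1}^{N}\mathbb{E}\big[|Y_{t_{k+1}}^{i,N}|^2\mid\mathcal{F}_{t_k}\big]\leq\big(1+(c_2-c_1)\Delta+(b_1+b_2)\Delta^2\big)\frac{1}{N}\sum_{i=1}^{N}|Y_{t_{k}}^{i,N}|^2,
\]
using Assumptions \ref{asas} and \ref{aqsqsE} and the Wasserstein bound $\mathbb{W}_2(\mu_{t_k}^{Y,N},\delta_0)^2\leq\frac{1}{N}\sum_i|Y_{t_k}^{i,N}|^2$ from Remark \ref{rem1}.

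Next I would define, exactly as in Theorem \ref{meanE} but with $c_1,c_2$ in place of $a_1,a_2$,
\[
h(\lambda)=\lambda^{\Delta}\big((b_1+b_2)\Delta^2+(c_2-c_1)\Delta+1\big)-1,
\]
and set $\bar\Delta_0=\bar\Delta_1\wedge\bar\Delta_2\wedge1$ with $\bar\Delta_1=\frac{c_1-c_2}{b_1+b_2}$ and $\bar\Delta_2$ chosen so that the bracketed factor stays positive, guaranteeing a unique root $\lambda_\Delta^{*}>1$; writing $\lambda_\Delta^{*}=\exp(\xi_\Delta^{*})$ gives the rate with $\lim_{\Delta\to0}\xi_\Delta^{*}=c_1-c_2$ by the same $\tilde h$-expansion. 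With this $\lambda_\Delta^{*}$, the telescoping identity (\ref{ms2}) applied to the empirical average shows that
\[
S_k:=\lambda_\Delta^{*\,k\Delta}\frac{1}{N}\sum_{i=1}^{N}|Y_{t_{k}}^{i,N}|^2
\]
is a nonnegative \emph{supermartingale}: the deterministic part telescopes to zero by the definition $h(\lambda_\Delta^{*})=0$, and the remaining stochastic increments $\lambda_\Delta^{*\,(k+1)\Delta}\frac{1}{N}\sum_i M_k^{i}$ have zero conditional mean.

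The decisive step is then the almost sure convergence: since $S_k$ is a nonnegative supermartingale, the martingale convergence theorem yields $S_k\to S_\infty<\infty$ a.s., hence $\frac{1}{N}\sum_i|Y_{t_k}^{i,N}|^2\leq\lambda_\Delta^{*\,-k\Delta}S_\infty$, and taking $\frac{1}{k\Delta}\log(\cdot)$ and letting $k\to\infty$ gives $\limsup_k\frac{1}{k\Delta}\log\big(\frac{1}{N}\sum_i|Y_{t_k}^{i,N}|^2\big)\leq-\log\lambda_\Delta^{*}=-\xi_\Delta^{*}$ a.s. I expect the main obstacle to be a clean verification that $S_k$ is genuinely a supermartingale rather than merely bounded in expectation: one must check carefully that the $\Delta^2$ drift term and the $\|\sigma\Delta W\|^2$ term combine so that the conditional one-step increment of $S_k$ is nonpositive \emph{before} any expectation is removed, which is precisely why the factor $m$ (the Brownian dimension) appears in Assumption \ref{aqsqsE}. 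An alternative, if the strict supermartingale property is awkward, is to invoke the discrete semimartingale convergence lemma (the discrete analogue of Lemma 2.2 in \cite{mao1997} used in the earlier almost sure theorem) applied to $S_k$ after isolating its martingale part; this bypasses the need for exact sign control on every increment and only requires that the martingale term have almost surely convergent quadratic variation, which follows from the linear-growth Assumption \ref{asas} and the already-established a.s. finiteness of $\sup_k S_k$.
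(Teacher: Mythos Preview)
Your proposal is correct and in fact takes a cleaner route than the paper. The key difference is the point at which you aggregate over particles. You sum the pathwise squared recursion over $j=1,\dots,N$ \emph{first}, which closes the inequality into a single recursion for the empirical average $\frac{1}{N}\sum_i|Y_{t_k}^{i,N}|^2$; the resulting one-step contraction factor is $1+(c_2-c_1)\Delta+(b_1+b_2)\Delta^2$, and the scaled process $S_k=\lambda_\Delta^{*\,k\Delta}\frac1N\sum_i|Y_{t_k}^{i,N}|^2$ is then a genuine nonnegative supermartingale (the verification you flag as a potential obstacle is in fact immediate: the pathwise bound $S_{k+1}\le S_k+\tilde M_k$ with $\mathbb{E}[\tilde M_k\mid\mathcal F_{t_k}]=0$ gives it directly). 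Supermartingale convergence finishes the proof. The paper instead keeps $j$ fixed, defines an auxiliary function $f(\lambda)=\lambda^\Delta(1+b_1\Delta^2-c_1\Delta)-1$ involving only the ``individual'' constants $b_1,c_1$, chooses its root $\vartheta_\Delta^*$, then sums over $j$ and is left with a residual empirical-average term with coefficient $(b_2\Delta^2+c_2\Delta)$; this forces a further discrete Gronwall iteration producing an extra factor $\exp\big(k\Delta\,\vartheta_\Delta^{*\,\Delta}(b_2\Delta+c_2)\big)$, and the final rate is $\xi_\Delta^*=\log\vartheta_\Delta^*-\vartheta_\Delta^{*\,\Delta}(b_2\Delta+c_2)$, whose positivity requires a separate argument via a function $g(\Delta)$. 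Both routes yield $\lim_{\Delta\to0}\xi_\Delta^*=c_1-c_2$, but your aggregation-first approach avoids the two-stage decomposition and the extra positivity check entirely. Two minor points: your inequality $\frac1N\sum_i|Y_{t_k}^{i,N}|^2\le\lambda_\Delta^{*\,-k\Delta}S_\infty$ should read $\le\lambda_\Delta^{*\,-k\Delta}\sup_lS_l$ (a convergent nonnegative supermartingale has a.s.\ finite supremum, which suffices); and the factor $m$ in Assumption~\ref{aqsqsE} enters not through $\mathbb{E}[\|\sigma\Delta W\|^2\mid\mathcal F_{t_k}]=\|\sigma\|^2\Delta$ but through the cruder pathwise bound $|\sigma\Delta W|^2\le\|\sigma\|^2|\Delta W|^2$ followed by $\mathbb{E}[|\Delta W|^2]=m\Delta$, which is what makes the centred remainder a true martingale difference.
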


	\begin{proof}
		
		Firstly, we transform (\ref{num}) into
		%\begin{equation*}
		%	\begin{split}
			%	|Y_{t_{k+1}}^{j,N}|^2=&|Y_{t_{k}}^{j,N}|^2+|b(Y_{t_{k}}^{j,N},\mu_{t_{k}}^{N})|^2\Delta^2+\|\sigma(Y_{t_{k}}^{j,N},\mu_{t_{k}}^{N})\|^2|\Delta W_{t_k}^{j}| \\
			%		&+2\langle Y_{t_{k}}^{j,N},b(Y_{t_{k}}^{j,N},\mu_{t_{k}}^{N})\rangle\Delta+2\langle Y_{t_{k}}^{j,N},\sigma(Y_{t_{k}}^{j,N},\mu_{t_{k}}^{N})\Delta W_{t_k}^{j}\rangle\\
			%		&+2\langle b(Y_{t_{k}}^{j,N},\mu_{t_{k}}^{N})\Delta,\sigma(Y_{t_{k}}^{j,N},\mu_{t_{k}}^{N})\Delta W_{t_k}^{j}\rangle
			%	\end{split}
		%\end{equation*}
		
		\begin{equation*}
			\begin{split}
				|Y_{t_{k+1}}^{j,N}|^2=&|Y_{t_{k}}^{j,N}|^2+\Delta\big(2\langle Y_{t_{k}}^{j,N},b(Y_{t_{k}}^{j,N},\mu_{t_{k}}^{Y,N})\rangle+m\|\sigma(Y_{t_{k}}^{j,N},\mu_{t_{k}}^{Y,N})\|^2 \\&+|b(Y_{t_{k}}^{j,N},\mu_{t_{k}}^{Y,N})|^2\Delta
				\big)+\gamma_{k}^{j},
			\end{split}
		\end{equation*}
		where 
		\begin{equation*}
			\begin{split}
				\gamma_{k}^{j}=&	\|\sigma(Y_{t_{k}}^{j,N},\mu_{t_{k}}^{Y,N})\|^2|\Delta W_{t_k}^{j}|^2 -m	\|\sigma(Y_{t_{k}}^{j,N},\mu_{t_{k}}^{Y,N})\|^2\Delta\\&+2\langle Y_{t_{k}}^{j,N},\sigma(Y_{t_{k}}^{j,N},\mu_{t_{k}}^{Y,N})\Delta W_{t_k}^{j}\rangle
				+2\langle b(Y_{t_{k}}^{j,N},\mu_{t_{k}}^{Y,N})\Delta,\sigma(Y_{t_{k}}^{j,N},\mu_{t_{k}}^{Y,N})\Delta W_{t_k}^{j}\rangle.
			\end{split}
		\end{equation*}
		By Assumptions \ref{asas} and \ref{aqsqsE}, we obtain
		%\begin{equation*}
		%	\begin{split}
			%		|Y_{t_{k+1}}^{j,N}|^2\leq&|Y_{t_{k}}^{j,N}|^2+\Delta\big(-a_1|Y_{t_k}|^2+a_2\mathbb{W}_{2}^{2}(\mu_{t_{k}}^{N})+b_1\Delta|Y_{t_k}|^2+b_2\Delta\mathbb{W}_{2}^{2}(\mu_{t_{k}}^{N})	\big)+m_{k}\\
			%		\leq&|Y_{t_{k}}^{j,N}|^2+\Delta(b_1\Delta-a_1)|Y_{t_k}|^2+\Delta(b_2\Delta+a_2)\mathbb{W}_{2}^{2}(\mu_{t_{k}}^{N})+m_{k}^{j}.
			%	\end{split}
		%\end{equation*}
		%Let $s_1=\Delta(b_1\Delta^2-a_1\Delta)$ and $s_2=\Delta(b_2\Delta^2+a_2\Delta)$. 
		\begin{equation}\label{as1}
			\begin{split}
				|Y_{t_{k+1}}^{j,N}|^2\leq&|Y_{t_{k}}^{j,N}|^2+\Delta\big(-c_1|Y_{t_k}^{j,N}|^2+\frac{c_2}{N}\sum_{i=1}^{N}|Y_{t_k}^{i,N}|^2+b_1\Delta|Y_{t_k}^{j,N}|^2\\
				&+\frac{b_2\Delta}{N}\sum_{i=1}^{N}|Y_{t_k}^{i,N}|^2\big)+\gamma_{k}^{j}\\
				\leq&|Y_{t_{k}}^{j,N}|^2+\Delta(b_1\Delta-c_1)|Y_{t_k}^{j,N}|^2+\Delta(b_2\Delta+c_2)\frac{1}{N}\sum_{i=1}^{N}|Y_{t_k}^{i,N}|^2+\gamma_{k}^{j}.
			\end{split}
		\end{equation}
		Combining (\ref{ms2}) and (\ref{as1}) gives that
		\begin{equation*}
			\begin{split}
				&\lambda^{(k+1)\Delta}	|Y_{t_{k+1}}^{j,N}|^2-\lambda^{k\Delta}	|Y_{t_{k}}^{j,N}|^2\\
				\leq& \lambda^{(k+1)\Delta}(b_1\Delta^2-c_1\Delta)|Y_{t_{k}}^{j,N}|^2+\lambda^{(k+1)\Delta}(b_2\Delta^2+c_2\Delta)\frac{1}{N}\sum_{i=1}^{N}|Y_{t_k}^{i,N}|^2\\&+\lambda^{(k+1)\Delta}\gamma_{k}^{j}+\big(\lambda^{(k+1)\Delta}-\lambda^{k\Delta}\big)|Y_{t_{k}}^{j,N}|^2\\
				\leq& \lambda^{(k+1)\Delta}\big[(1+b_1\Delta^2-c_1\Delta-\lambda^{-\Delta})|Y_{t_{k}}^{j,N}|^2+(b_2\Delta^2+c_2\Delta)\frac{1}{N}\sum_{i=1}^{N}|Y_{t_k}^{i,N}|^2+\gamma_{k}^{j}\big],
			\end{split}
		\end{equation*}
		which implies that
		\begin{equation*}
			\begin{split}
				&\lambda^{k\Delta}	|Y_{t_{k}}^{j,N}|^2-|Y_{0}^{j,N}|^2\\
				\leq& \sum_{l=0}^{k-1}\lambda^{(l+1)\Delta}\big[(1+b_1\Delta^2-c_1\Delta-\lambda^{-\Delta})|Y_{t_{l}}^{j,N}|^2\\
				&+(b_2\Delta^2+c_2\Delta)\frac{1}{N}\sum_{i=1}^{N}|Y_{t_l}^{i,N}|^2+\gamma_{l}^{j}\big].
			\end{split}
		\end{equation*}
		Let us introduce the function
		\begin{equation}\label{f}
			f(\lambda)=\lambda^{\Delta}(1+b_1\Delta^2-c_1\Delta)-1.
		\end{equation}
		Choose $\bar{\Delta}_{1}>0$ to make $1+b_1\Delta^2-c_1\Delta>0$ hold for any $\Delta<\bar{\Delta}_{1}\wedge 1$. Then, we derive that $f'(\lambda)>0$ and $\lim_{\lambda\rightarrow{\infty}}f(\lambda)={\infty}$ for any $\lambda>1$. Moreover, there has $\bar{\Delta}_{2}$ such that for any $\Delta<\bar{\Delta}_{2}\wedge 1$, $f(1)<0$.
		Therefore, for any $\Delta<\bar{\Delta}_{1}\wedge\bar{\Delta}_{2}\wedge 1$, there exists a unique ${{\vartheta_{\Delta}^{*}}}>1$ such that $f({{\vartheta_{\Delta}^{*}}})=0$, where ${{{\vartheta_{\Delta}^{*}}}}$ depends on the stepsize $\Delta$. By taking $\lambda={{\vartheta_{\Delta}^{*}}}$, we have
		\begin{equation}\label{jN}
			\begin{split}
				{{\vartheta_{\Delta}^{*}}}^{k\Delta}	|Y_{t_{k}}^{j,N}|^2
				\leq &|Y_{0}^{j,N}|^2+\frac{1}{N}(b_2\Delta^2+c_2\Delta)\sum_{l=0}^{k-1}\sum_{i=1}^{N}	{{\vartheta_{\Delta}^{*}}}^{(l+1)\Delta}|Y_{t_{l}}^{i,N}|^2\\
				&+\sum_{l=0}^{k-1}	{{\vartheta_{\Delta}^{*}}}^{(l+1)\Delta}\gamma_{l}^{j}.
			\end{split}
		\end{equation}
		Every particle satisfies (\ref{jN}). Summing $N$ inequalities gives the result
		\begin{equation*}
			\begin{split}
				\sum_{i=1}^{N}	{{\vartheta_{\Delta}^{*}}}^{k\Delta}	|Y_{t_{k}}^{i,N}|^2
				\leq& \sum_{i=1}^{N}|Y_{0}^{i,N}|^2+(b_2\Delta^2+c_2\Delta)\sum_{l=0}^{k-1}\sum_{i=1}^{N}	{{\vartheta_{\Delta}^{*}}}^{(l+1)\Delta}|Y_{t_{l}}^{i,N}|^2\\&+\sum_{i=1}^{N}\sum_{l=0}^{k-1}{{\vartheta_{\Delta}^{*}}}^{(l+1)\Delta}\gamma_{l}^{i}.
			\end{split}
		\end{equation*}
		Let $A_l=	\sum_{i=1}^{N}{{\vartheta_{\Delta}^{*}}}^{l\Delta}	|Y_{t_{l}}^{i,N}|^2$, $B_l=\sum_{i=1}^{N}{{\vartheta_{\Delta}^{*}}}^{(l+1)\Delta}\gamma_{l}^{i}$, $l\in\{0,1,\cdots,k-1\}$, and 
		\begin{equation*}
			\begin{split}
				\Upsilon_k
				= \sum_{i=1}^{N}|Y_{0}^{i,N}|^2+{\vartheta_{\Delta}^{*}}^\Delta(b_2\Delta^2+c_2\Delta)\sum_{l=0}^{k-1}A_l+\sum_{l=0}^{k-1}B_l.
			\end{split}
		\end{equation*}
		We construct
		\begin{equation}\label{Xx}
			\begin{split}
				\Upsilon_k-\Upsilon_{k-1}
				= {{\vartheta_{\Delta}^{*}}}^\Delta(b_2\Delta^2+c_2\Delta)A_{k-1}+B_{k-1}.
			\end{split}
		\end{equation}
		Since $A_l\leq \Upsilon_l$ for $l\in\{0,1,\cdots,k\}$, (\ref{Xx}) becomes into
		\begin{equation}\label{inter}
			\begin{split}
				\Upsilon_k=&\Upsilon_{k-1}
				+ {\vartheta_{\Delta}^{*}}^\Delta(b_2\Delta^2+c_2\Delta)A_{k-1}+B_{k-1}\\
				\leq& \big(1+{\vartheta_{\Delta}^{*}}^\Delta(b_2\Delta^2+c_2\Delta)\big)\Upsilon_{k-1}+B_{k-1}.
			\end{split}
		\end{equation}
		Owing to (\ref{inter}), it can be obtained by iteration that
		\begin{equation*}
			\begin{split}
				\Upsilon_k
				\leq \big(1+{\vartheta_{\Delta}^{*}}^\Delta(b_2\Delta^2+c_2\Delta)\big)^{k-1}\Upsilon_{1}+\sum_{l=1}^{k-1}\big(1+{\vartheta_{\Delta}^{*}}^\Delta(b_2\Delta^2+c_2\Delta)\big)^{k-1-l}B_{l}.
			\end{split}
		\end{equation*}
		Thus,
		\begin{equation}\label{eee}
			\begin{split}
			&	\sum_{i=1}^{N}{\vartheta_{\Delta}^{*}}^{k\Delta}	|Y_{t_{k}}^{i,N}|^2\\
				\leq& \exp\big({k\Delta {\vartheta_{\Delta}^{*}}^\Delta(b_2\Delta+c_2)}\big)	\Big(\sum_{i=1}^{N}|Y_{0}^{i,N}|^2+\sum_{l=1}^{k-1}\frac{B_{l}}{\big(1+{\vartheta_{\Delta}^{*}}^\Delta(b_2\Delta^2+c_2\Delta)\big)^{l}}\Big),
			\end{split}
		\end{equation}
		which follows from an elementary inequality $1+z\leq e^z$ for $z\in\mathbb{R}$.
		
		%
		%	\\&+\exp\big({(k-1)\Delta {\vartheta_{\Delta}^{*}}^\Delta(b_2\Delta+c_2)}\big)\sum_{l=1}^{k-1}B_{l},
		
		%Then, by choosing $\Delta_3$ such that for $\Delta<\Delta_3$, ${\vartheta_{\Delta}^{*}}^\Delta(b_2\Delta+a_2)\leq1$, and dividing both sides of (\ref{eee}) by $e^{k\Delta}$, we obtain that
		
		Then, dividing both sides of (\ref{eee}) by $N \exp\big({k\Delta {\vartheta_{\Delta}^{*}}^\Delta(b_2\Delta+c_2)}\big)$ yields that
		
		\begin{equation}\label{sure}
			\begin{split}
				&	(\frac{{\vartheta_{\Delta}^{*}}^{k\Delta}}{ \exp\big({k\Delta {\vartheta_{\Delta}^{*}}^\Delta(b_2\Delta+c_2)}\big)})\frac{1}{N}\sum_{i=1}^{N}	|Y_{t_{k}}^{i,N}|^2
				\\\leq	&\frac{1}{N}\sum_{i=1}^{N} |Y_{0}^{i,N}|^2+\frac{1}{N}\sum_{l=1}^{k-1}\frac{B_{l}}{\big(1+{\vartheta_{\Delta}^{*}}^\Delta(b_2\Delta^2+c_2\Delta)\big)^{l}}.
			\end{split}
		\end{equation}
		%\begin{equation}
		%	\begin{split}
			%		(\frac{\vartheta_{\Delta}^{*}}{e})^{k\Delta}\sum_{j=1}^{N}	|Y_{t_{k}}^{j,N}|^2
			%		\leq e^{-\Delta}N|Y_{0}^{j,N}|^2+e^{-\Delta}\sum_{l=1}^{k-1}B_{l}.
			%	\end{split}
		%\end{equation}
		Define 
		\begin{equation*}
			H_k=	\frac{1}{N}\sum_{i=1}^{N}|Y_{0}^{i,N}|^2+M_k,
		\end{equation*}
		where 
		\begin{equation*}
			\begin{split}
				M_k&=\frac{1}{N}\sum_{l=1}^{k-1}\frac{B_{l}}{\big(1+{\vartheta_{\Delta}^{*}}^\Delta(b_2\Delta^2+c_2\Delta)\big)^{l}}\\&=\frac{1}{N} \sum_{l=1}^{k-1}\sum_{i=1}^{N}\frac{{\vartheta_{\Delta}^{*}}^{(l+1)\Delta}\gamma_{l}^{i}}{\big(1+{\vartheta_{\Delta}^{*}}^\Delta(b_2\Delta^2+c_2\Delta)\big)^{l}}.
			\end{split}
		\end{equation*}
		The following will demonstrate that $M_k$ is a martingale.

		For any $j\in\mathbb{S}_{N}$, it is worth noting that $\mathbb{E}\big[(|\Delta W_{t_{k-1}}^{j}|^2-m\Delta)|\mathcal{F}_{(k-1)\Delta}\big]=0$, so
		\begin{equation*}
			\begin{split}
				\mathbb{E}\big[\gamma_{k-1}^{j}|\mathcal{F}_{(k-1)\Delta}\big]=&	\|\sigma(Y_{t_{k-1}}^{j,N},\mu_{t_{k-1}}^{Y,N})\|^2\mathbb{E}\big[(|\Delta W_{t_{k-1}}^{j}|^2-m\Delta)|\mathcal{F}_{(k-1)\Delta}\big]\\&+2\mathbb{E}\big[\langle Y_{t_{k-1}}^{j,N},\sigma(Y_{t_{k-1}}^{j,N},\mu_{t_{k-1}}^{Y,N})\Delta W_{t_{k-1}}^{j}\rangle|\mathcal{F}_{(k-1)\Delta}\big]\\&
				+2\mathbb{E}\big[\langle b(Y_{t_{k-1}}^{j,N},\mu_{t_{k-1}}^{Y,N})\Delta,\sigma(Y_{t_{k-1}}^{j,N},\mu_{t_{k-1}}^{Y,N})\Delta W_{t_{k-1}}^{j}\rangle|\mathcal{F}_{(k-1)\Delta}\big]\\
				=&0.
			\end{split}
		\end{equation*}
		Then,
		\begin{equation*}
			\begin{split}
				\mathbb{E}\big[M_k|\mathcal{F}_{(k-1)\Delta}\big]=&	\mathbb{E}\big[M_{k-1}+\frac{1}{N}\sum_{i=1}^{N}\frac{{\vartheta_{\Delta}^{*}}^{k\Delta}\gamma_{k-1}^{i}}{{\big(1+{\vartheta_{\Delta}^{*}}^\Delta(b_2\Delta^2+c_2\Delta)\big)^{k-1}}}\big|\mathcal{F}_{(k-1)\Delta}\big]\\
				=&M_{k-1}+\frac{1}{N}\sum_{i=1}^{N}\frac{{\vartheta_{\Delta}^{*}}^{k\Delta}}{{\big(1+{\vartheta_{\Delta}^{*}}^\Delta(b_2\Delta^2+c_2\Delta)\big)^{k-1}}}\mathbb{E}\big[\gamma_{k-1}^{i}|\mathcal{F}_{(k-1)\Delta}\big]\\
				=&M_{k-1}.
			\end{split}
		\end{equation*}
		Therefore, applying Lemma 2.2 in \cite{mao1997} with (\ref{sure}) gives that for any $j\in\mathbb{S}_{N}$ and $\Delta<\bar{\Delta}_{1}\wedge\bar{\Delta}_{2}\wedge 1$,
		\begin{equation}\label{as}
			\lim_{k\rightarrow\infty}\iota^{*}_{k}\frac{1}{N}\sum_{i=1}^{N}|Y_{t_{k}}^{i,N}|^2\leq\lim_{k\rightarrow\infty}H_{k}(\omega)<\infty~~~~a.s.
		\end{equation}
		%where $\eta^{*}_{k}=\frac{{\vartheta_{\Delta}^{*}}^{k\Delta}}{e^{(k-1)\Delta {\vartheta_{\Delta}^{*}}^\Delta(b_2\Delta+a_2)}}$ and ${\vartheta_{\Delta}^{*}}$ is a positive constant which is larger than $e$. 
		where $\iota^{*}_{k}={\vartheta_{\Delta}^{*}}^{k\Delta}/ \exp\big({k\Delta {\vartheta_{\Delta}^{*}}^\Delta(b_2\Delta+c_2)}\big)$. 
		Choose $\tau_{\Delta}^{*}$ and $\xi_{\Delta}^{*}$ such that $\vartheta_{\Delta}^{*}=\exp(\tau_{\Delta}^{*})$ and $\iota^{*}_{k}=\exp(k\Delta\xi_{\Delta}^{*})$, then (\ref{as}) reads
		%Let ${\tau_{\Delta}^{*}}=\log{\vartheta_{\Delta}^{*}}$ and ${\xi_{\Delta}^{*}}=\frac{\log{\eta^{*}_{k}}}{k\Delta}$, then (\ref{as}) reads
		\begin{equation*}
			\lim_{k\rightarrow\infty}\exp(k\Delta\xi_{\Delta}^{*})\frac{1}{N}\sum_{i=1}^{N}|Y_{t_{k}}^{i,N}|^2<\infty.
		\end{equation*}
		Define
		\begin{equation*}
			\tilde{f}(\lambda)=b_1\Delta-c_1+\frac{(1-\lambda^{-\Delta})}{\Delta}.
		\end{equation*}
		%Even from $(\ref{f})$, ${\vartheta_{\Delta}^{*}}$ can be represented by $e^{\frac{-\log(1+b_1\Delta^2-a_1\Delta)}{\Delta}}$.
		%*************************
		%
		%\begin{rem}
		%In order to derive the rate of stability, an additional constraint exerted on stepsize is $\Delta<\frac{a_1}{b_1}$.
		%\end{rem}
		%$K_{\Delta}^{*}=e+\epsilon_1$, $\epsilon_1$ is a positive constant.
		%%%%%%%%%%%%%%%
		%Moreover, $\eta^{*}_{k}$ can be expressed as: %$$\frac{(\frac{1}{1+b_1\Delta^2-a_1\Delta})^k}{e^{(k-1)\frac{b_2\Delta^2+a_2\Delta}{1+b_1\Delta^2-a_1\Delta}}}.$$
		%$$(\frac{1}{1+b_1\Delta^2-a_1\Delta})^k\big/e^{(k-1)\frac{b_2\Delta^2+a_2\Delta}{1+b_1\Delta^2-a_1\Delta}}.$$
		%Denote $w^k_{\Delta}:=(\frac{1}{1+b_1\Delta^2-a_1\Delta})^k$ and $v^k_{\Delta}:=e^{(k-1)\frac{b_2\Delta^2+a_2\Delta}{1+b_1\Delta^2-a_1\Delta}}$, then
		%\begin{equation}\label{e}
		%	\begin{split}
			%\lim_{\Delta\rightarrow0}\lim_{k\rightarrow\infty}w^k_{\Delta}=&\lim_{\Delta\rightarrow0}\lim_{k\rightarrow\infty}\big(1+\frac{a_1\Delta-b_1\Delta^2}{1+b_1\Delta^2-a_1\Delta}\big)^k\\
			%=&\lim_{\Delta\rightarrow0}\lim_{k\rightarrow\infty}e^{\frac{k(a_1\Delta-b_1\Delta^2)}{1+b_1\Delta^2-a_1\Delta}}.
			%	\end{split}
		%\end{equation}
		According to $f(\vartheta_{\Delta}^{*})=0$, we obviously have 
		\begin{equation*}
			\tilde{f}(\vartheta_{\Delta}^{*})=b_1\Delta-c_1+\frac{(1-{\vartheta_{\Delta}^{*}}^{-\Delta})}{\Delta}=	b_1\Delta-c_1+\frac{(1-\exp(-\Delta{\tau_{\Delta}^{*}}))}{\Delta}=0.
		\end{equation*}
		Thus, 
		\begin{equation*}
			\lim_{\Delta\rightarrow0}\tau_{\Delta}^{*}	=c_1.
		\end{equation*}
		From (\ref{f}) and the chosen $\tau_{\Delta}^{*}$ and ${\xi_{\Delta}^{*}}$, we therefore have that
		\begin{equation*}
			\begin{split}
				{\xi_{\Delta}^{*}}={\tau_{\Delta}^{*}}- {\vartheta_{\Delta}^{*}}^\Delta(b_2\Delta+c_2)=-\frac{\log(1+b_1\Delta^2-c_1\Delta)}{\Delta}-\frac{b_2\Delta+c_2}{1+b_1\Delta^2-c_1\Delta}.
			\end{split}
		\end{equation*}
		Define
		\begin{equation*}
			g(\Delta)=\log(1+b_1\Delta^2-c_1\Delta)+\frac{b_2\Delta^2+c_2\Delta}{1+b_1\Delta^2-c_1\Delta}.
		\end{equation*}
		Then, 
		\begin{equation*}
			\begin{split}
				g'(\Delta)
				=&\frac{2{b_1}^2\Delta^3-(b_1c_2+c_1b_2+3b_1c_1)\Delta^2+(2b_1+2b_2+{c_1}^2)\Delta+c_2-c_1}{(1+b_1\Delta^2-c_1\Delta)^2}.
			\end{split}
		\end{equation*}
		%=&\frac{2b_1\Delta-c_1}{1+b_1\Delta^2-c_1\Delta}+\frac{(2b_2\Delta+c_2)(1+b_1\Delta^2-c_1\Delta)-(2b_1\Delta-c_1)(b_2\Delta^2+c_2\Delta)}{(1+b_1\Delta^2-c_1\Delta)^2}\\
		Thus, there exists a $\bar{\Delta}_3$ such that $g'(\Delta)<0$ for $\Delta\in(0,\bar{\Delta}_3)$. Let $\bar{\Delta}_0=\bar{\Delta}_{1}\wedge\bar{\Delta}_{2}\wedge\bar{\Delta}_{3}\wedge 1$. Since $g(0)=0$ and $g'(\Delta)<0$ for $\Delta\in(0,\bar{\Delta}_0)$, we have $g(\Delta)<0$ for $\Delta\in(0,\bar{\Delta}_0)$, which implies $\xi_{\Delta}^{*}>0$.\\
		It follows that
		\begin{equation*}
			\lim_{\Delta\rightarrow0}	{\xi_{\Delta}^{*}}=\lim_{\Delta\rightarrow0}{\tau_{\Delta}^{*}}-\lim_{\Delta\rightarrow0}\big( {\vartheta_{\Delta}^{*}}^\Delta(b_2\Delta+c_2)\big)=c_1-c_2.
		\end{equation*}
		Then, we derive the desired statement.
		%
		%
		%
		%\begin{equation}
		%	\begin{split}
			%		\lim_{\Delta\rightarrow0}\lim_{k\rightarrow\infty}e^{\xi_{\Delta} k\Delta}\frac{1}{N}\sum_{i=1}^{N}|Y_{t_{k}}^{i,N}|^2<\infty~~~~a.s.
			%	\end{split}
		%\end{equation}
		%where $\xi_{\Delta}=(a_1-a_2)k\Delta-(b_1+b_2)k\Delta^2+(b_2\Delta^2+a_2\Delta)$.

		%\begin{equation*}
		%	\begin{split}
			%		\lim_{\Delta\rightarrow0}\lim_{k\rightarrow\infty}\frac{1}{k\Delta}\log\big(\frac{1}{N}\sum_{i=1}^{N}|Y_{t_{k}}^{i,N}|^2\big)<a_2-a_1~~~~a.s.
			%	\end{split}
		%\end{equation*}
	\end{proof}

	\section{Superlinear coefficient: stability of  backward EM scheme}
	%	Furthermore, using the proof techniques of Theorems $3.1$, $5.2$ in \cite{qiaosiam} with the theories about pathwise and law uniqueness in \cite{dingqiao,1991}, we can obtain the following theorem. 
	For SMVEs with coefficients of superlinear growth, the EM scheme fails to preserve the stability. To address this issue, we propose the backward EM scheme for the associated particle system in this section, and prove that it maintains mean-square and almost sure stabilities under such conditions.
	%In this section, we construct the backward EM scheme for the associated particle system and investigate its stability properties.
	Define the time-step $\Delta=\frac{1}{M}$ for a positive integer $M$.
	For each $j\in\mathbb{S}_{N}$, the backward EM scheme for interacting particle system  (\ref{MVVVVV}) is defined by:
	\begin{equation}\label{bcnum}
		Z_{t_{k+1}}^{j,N}=Z_{t_{k}}^{j,N}+b(Z_{t_{k+1}}^{j,N},\mu_{t_{k}}^{Z,N})\Delta+\sigma(Z_{t_{k}}^{j,N},\mu_{t_{k}}^{Z,N})\Delta W_{t_k}^{j},
	\end{equation}
	where $Z_{0}^{j,N}=X_{0}^{j}$, $\Delta W_{t_k}^{j}=W_{t_{k+1}}^{j}-W_{t_k}^{j}$ and $\mu_{t_{k}}^{Z,N}(\cdot):=\frac{1}{N}\sum_{i=1}^{N}\delta_{Z_{t_{k}}^{i,N}}(\cdot)$.
	\begin{ass}\label{ass3}
		For any $\mu\in\mathcal{P}_2(\mathbb{R}^d)$, there exists a positive constant $C$ such that
		$
		|b(0,\mu)|+\|\sigma(0,\mu)\|\leq C.
		$
		%		\begin{equation*}
			%			|b(0,\delta_{0})|+\|\sigma(0,\delta_{0})\|=0,
			%		\end{equation*}
	\end{ass}
	It is easy to see that the backward EM scheme (\ref{bcnum}) is well defined under Assumptions \ref{ass2}, \ref{ass1}, and \ref{ass3}, see  \cite{31}.
	%	\section{Mean-square stability of backward EM scheme for interacting particle system}
	\subsection{Mean-square stability of backward EM scheme}
	In this subsection, the mean-square stability of the backward EM scheme is obtained even if the state variable in diffusion coefficient is superlinear.
	\begin{ass}\label{asas0}
		Assume that $\sigma(x,\mu)=\sigma_1(x)+\sigma_2(\mu)$.
		For any $x\in\mathbb{R}^d$ and $\mu\in\mathcal{P}_2(\mathbb{R}^d)$,
		there exist constants $l_1,l_2,d_2>0$ and $p_0\geq3$ such that
		\begin{equation*}
			2\langle x,b(x,\mu)\rangle+(p_0-1)\|\sigma_1(x)\|^2\leq -l_1|x|^2+l_2\mathcal{W}_{2}(\mu)^2,
		\end{equation*}
		\begin{equation*}
			\|\sigma_2(\mu)\|^2\leq d_2\mathcal{W}_{2}(\mu)^2.
		\end{equation*}
	\end{ass}
	
	Obviously, under Assumptions \ref{ass2}, \ref{ass3}, and \ref{asas0}, the state variables in both drift and diffusion coefficients are allowed to be highly nonlinear. Then, we can get the following theorem.
	%It should be noticed that taking appropriate values in Assumption \ref{asas} can make Assumption \ref{ass1} hold.

	\begin{thm}\label{mean}
		Let Assumptions \ref{ass2}, \ref{ass3}, and \ref{asas0} hold with $l_1>l_2+2d_2$. Then, for any $j\in\mathbb{S}_{N}$ and $\Delta\in(0,1)$,	the numerical solution $Z_{t_{k}}^{j,N}$ is  exponentially stable in mean-square sense, i.e.,
		$$\lim_{k\rightarrow\infty}\frac{1}{k\Delta}\log\big(	\mathbb{E}|Z_{t_{k}}^{j,N}|^2\big)\leq -\alpha_{\Delta}^{*},$$
		where $\alpha_{\Delta}^{*}$ is a positive constant satisfying
		$\lim_{\Delta\rightarrow0}{\alpha_{\Delta}^{*}}=l_1-l_2-2d_2.$
		% Here, the constraint on $\Delta_0$ is given in the proof.
	\end{thm}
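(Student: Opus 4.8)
The plan is to mirror the discrete Lyapunov argument of Theorem \ref{meanE}, but to exploit the implicitness of the drift so that the superlinear diffusion terms are absorbed rather than amplified. First I would rewrite the scheme as $Z_{t_{k+1}}^{j,N}-\Delta\,b(Z_{t_{k+1}}^{j,N},\mu_{t_k}^{Z,N})=Z_{t_k}^{j,N}+\sigma(Z_{t_k}^{j,N},\mu_{t_k}^{Z,N})\Delta W_{t_k}^j$ and take squared Euclidean norms on both sides. Because the drift is evaluated at the new time level, the cross term coming from the left-hand side is $\langle Z_{t_{k+1}}^{j,N},b(Z_{t_{k+1}}^{j,N},\mu_{t_k}^{Z,N})\rangle$, which is exactly the slot where Assumption \ref{asas0} applies; the companion term $\Delta^2|b(Z_{t_{k+1}}^{j,N},\mu_{t_k}^{Z,N})|^2$ is nonnegative and is simply discarded. (Solvability of the implicit step for every $\Delta$ follows from the strict monotonicity of $b(\cdot,\mu)$ encoded in Assumption \ref{ass2}, and the symmetry of the system in the particle index keeps the particles exchangeable, so $\mathbb{E}|Z_{t_k}^{i,N}|^2$ is independent of $i$, exactly as used in Theorem \ref{thm31}.)

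Next I would take expectations. The stochastic term $2\langle Z_{t_k}^{j,N},\sigma(Z_{t_k}^{j,N},\mu_{t_k}^{Z,N})\Delta W_{t_k}^j\rangle$ has zero mean, while $\mathbb{E}|\sigma\,\Delta W_{t_k}^j|^2=\Delta\,\mathbb{E}\|\sigma(Z_{t_k}^{j,N},\mu_{t_k}^{Z,N})\|^2$. Using $\sigma=\sigma_1+\sigma_2$ and Young's inequality with a free parameter $\theta>0$, I would bound $\|\sigma\|^2\le(1+\theta)\|\sigma_1\|^2+(1+\theta^{-1})\|\sigma_2\|^2$, and then invoke the two estimates in Assumption \ref{asas0} together with $\mathbb{E}\,\mathcal{W}_2(\mu_{t_k}^{Z,N})^2\le\frac1N\sum_{i=1}^N\mathbb{E}|Z_{t_k}^{i,N}|^2=\mathbb{E}|Z_{t_k}^{j,N}|^2$ (Remark \ref{rem1} and exchangeability). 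Writing $a_k=\mathbb{E}|Z_{t_k}^{j,N}|^2$ and $s_k=\mathbb{E}\|\sigma_1(Z_{t_k}^{j,N})\|^2$, this yields the one-step recursion
\[(1+l_1\Delta)\,a_{k+1}+(p_0-1)\Delta\,s_{k+1}\le\big(1+\gamma_\theta\Delta\big)a_k+(1+\theta)\Delta\,s_k,\qquad \gamma_\theta:=l_2+(1+\theta^{-1})d_2.\]
The gain of the backward scheme is already visible: dissipativity contributes $-(p_0-1)\Delta\,s_{k+1}$ at the \emph{advanced} level, whereas the diffusion only feeds $+(1+\theta)\Delta\,s_k$ at the current level, so the superlinear quantity $s_k$ can be telescoped away rather than controlled pointwise.

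To extract the rate I would introduce the weight $\lambda^{k\Delta}$ with $\lambda>1$ as in Theorem \ref{meanE}, multiply the recursion by $\lambda^{(k+1)\Delta}$, and sum over $k=0,\dots,K-1$. Provided the two structural inequalities
\[(1+\theta)\lambda^{\Delta}\le p_0-1\qquad\text{and}\qquad(1+\gamma_\theta\Delta)\lambda^{\Delta}\le 1+l_1\Delta\]
hold, every interior $a_k$- and $s_k$-coefficient is nonnegative, so all intermediate terms can be dropped and I am left with $(1+l_1\Delta)\lambda^{K\Delta}a_K\le(1+\gamma_\theta\Delta)\lambda^{\Delta}a_0+(1+\theta)\Delta\lambda^{\Delta}s_0=:C_0<\infty$. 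Hence $a_K\le C_0\lambda^{-K\Delta}$, and taking $\tfrac{1}{K\Delta}\log(\cdot)$ and letting $K\to\infty$ yields the claim with $\alpha_\Delta^{*}=\log\lambda$. Choosing $\lambda^{\Delta}=\min\{(p_0-1)/(1+\theta),\,(1+l_1\Delta)/(1+\gamma_\theta\Delta)\}$ satisfies both inequalities, and the second forces $\lambda>1$ precisely when $l_1>\gamma_\theta$.

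The delicate point—which I expect to be the crux—is that a single $\lambda>1$ must meet both structural inequalities for \emph{every} $\Delta\in(0,1)$, and that the resulting rate attains the stated sharp limit. The first inequality requires $\theta<p_0-2$, while $\lambda>1$ in the second requires $\theta>d_2/(l_1-l_2-d_2)$; the hypotheses $p_0\ge3$ and $l_1>l_2+2d_2$ are exactly what make the admissible interval $\big(d_2/(l_1-l_2-d_2),\,p_0-2\big)$ nonempty, since its lower end is $<1\le p_0-2$, so a valid $\theta$ exists even at the boundary case $p_0=3$. For each admissible $\theta$ the minimum defining $\lambda^{\Delta}$ is attained by the second expression for small $\Delta$, and expanding $\tfrac1\Delta\log\frac{1+l_1\Delta}{1+\gamma_\theta\Delta}$ gives $\alpha_\Delta^{*}\to l_1-\gamma_\theta=l_1-l_2-(1+\theta^{-1})d_2$; letting $\theta\uparrow1$ recovers $\lim_{\Delta\to0}\alpha_\Delta^{*}=l_1-l_2-2d_2$. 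Reconciling the advanced-time and current-time $\sigma_1$ contributions via the weighted telescoping, while keeping both constraints feasible at the $p_0=3$ edge, is where the argument must be handled with care.
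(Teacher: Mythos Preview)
Your argument is correct and very close to the paper's; the one-step inequality
\[(1+l_1\Delta)\,a_{k+1}+(p_0-1)\Delta\,s_{k+1}\le(1+\gamma_\theta\Delta)\,a_k+(1+\theta)\Delta\,s_k\]
is exactly what the paper derives (they take $\theta=1$ from the outset, using $\|\sigma\|^2\le2\|\sigma_1\|^2+2\|\sigma_2\|^2$). The difference is in how the recursion is summed. The paper sums \emph{unweighted}, rearranges so that the interior $s_r$-terms carry the coefficient $-(p_0-3)\Delta\le0$, drops them, and then invokes a discrete Gronwall inequality to read off the rate $l_1-l_2-2d_2$. You instead introduce the weight $\lambda^{k\Delta}$ (as in Theorem~\ref{meanE}) and telescope under two explicit structural constraints; this makes the role of the advanced-time $s_{k+1}$ versus current-time $s_k$ contributions more transparent and avoids appealing to a Gronwall lemma with a negative increment.

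What your free parameter $\theta$ buys is a clean feasibility argument, but it also creates the small wrinkle you flag at the end: at the boundary $p_0=3$ your admissible interval for $\theta$ is the \emph{open} interval $\big(d_2/(l_1-l_2-d_2),\,1\big)$, so any fixed $\theta$ yields only $\lim_{\Delta\to0}\alpha_\Delta^*=l_1-\gamma_\theta<l_1-l_2-2d_2$. To recover the sharp limit you must let $\theta=\theta(\Delta)\uparrow1$ (e.g.\ $\theta=1-\sqrt{\Delta}$), and then check that the first constraint $(1+\theta)\lambda^\Delta\le p_0-1$ stays non-binding as $\Delta\to0$; this is easy but should be stated explicitly rather than hidden in ``letting $\theta\uparrow1$''. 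The paper avoids this diagonal step by setting $\theta=1$ directly and using $p_0\ge3$ only to ensure the telescoped $s_r$-coefficient $-(p_0-3)$ is nonpositive.
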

	\begin{proof}
		By implementing an elementary equality on (\ref{bcnum}), we obtain that
		\begin{equation}\label{009}
			\begin{split}
				&2\langle Z_{t_{k+1}}^{j,N},Z_{t_{k+1}}^{j,N}-Z_{t_{k}}^{j,N}\rangle\\
				=&|Z_{t_{k+1}}^{j,N}|^2-|Z_{t_{k}}^{j,N}|^2+|Z_{t_{k+1}}^{j,N}-Z_{t_{k}}^{j,N}|^2\\
				%	=&2\Delta\langle Z_{t_{k+1}}^{j,N},b(Z_{t_{k+1}}^{j,N},\mu_{t_{k}}^{Z,N})\rangle+2\langle Z_{t_{k+1}}^{j,N},  \sigma(Z_{t_{k}}^{j,N},\mu_{t_{k}}^{Z,N})\Delta W_{t_k}^{j}\rangle\\
				=&2\Delta\langle Z_{t_{k+1}}^{j,N},b(Z_{t_{k+1}}^{j,N},\mu_{t_{k}}^{Z,N})\rangle+2\langle Z_{t_{k+1}}^{j,N}-Z_{t_{k}}^{j,N}, \sigma(Z_{t_{k}}^{j,N},\mu_{t_{k}}^{Z,N})\Delta W_{t_k}^{j}\rangle,
			\end{split}
		\end{equation}
		which means 
		$$
		\mathbb{E}|Z_{t_{k+1}}^{j,N}|^2-{E}|Z_{t_{k}}^{j,N}|^2\leq2\Delta{E}\langle Z_{t_{k+1}}^{j,N},b(Z_{t_{k+1}}^{j,N},\mu_{t_{k}}^{Z,N})\rangle+\Delta{E}\| \sigma(Z_{t_{k}}^{j,N},\mu_{t_{k}}^{Z,N})\|^2,
		$$
		with Young's inequality.	
		From Assumption \ref{asas0}, one can see that
		\begin{equation}\label{202}
			\begin{split}
				\mathbb{E}|Z_{t_{k+1}}^{j,N}|^2-\mathbb{E}|Z_{t_{k}}^{j,N}|^2\leq&-l_1\Delta\mathbb{E}|Z_{t_{k+1}}^{j,N}|^2+l_2\Delta\mathbb{E}\big[\frac{1}{N}\sum_{i=1}^{N}|Z_{t_{k}}^{i,N}|^2\big]-(p_0-1)\Delta\mathbb{E}\|\sigma_1(Z_{t_{k+1}}^{j,N})\|^2\\&+2\Delta\mathbb{E}\|\sigma_1(Z_{t_{k}}^{j,N})\|^2+2d_2\Delta\mathbb{E}\big[\frac{1}{N}\sum_{i=1}^{N}|Z_{t_{k}}^{i,N}|^2\big].
			\end{split}
		\end{equation}
		Summing up (\ref{202}) from 0 to $k$ yields that
		\begin{equation}\label{065}
			\begin{split}
				\mathbb{E}|Z_{t_{k}}^{j,N}|^2\leq&\mathbb{E}|Z_{{0}}^{j,N}|^2-l_1\Delta\sum_{r=0}^{k-1}\mathbb{E}|Z_{t_{r+1}}^{j,N}|^2+l_2\Delta\sum_{r=0}^{k-1}\mathbb{E}|Z_{t_{r}}^{j,N}|^2-(p_0-1)\Delta\sum_{r=0}^{k-1}\mathbb{E}\|\sigma_1(Z_{t_{r+1}}^{j,N})\|^2\\&+2\Delta\sum_{r=0}^{k-1}\mathbb{E}\|\sigma_1(Z_{t_{r}}^{j,N})\|^2+2d_2\Delta\sum_{r=0}^{k-1}\mathbb{E}|Z_{t_{r}}^{j,N}|^2.
			\end{split}
		\end{equation}
		By rearranging (\ref{065}), we obtain
		\begin{equation*}
			\begin{split}
				&	(1+l_1\Delta)	\mathbb{E}|Z_{t_{k}}^{j,N}|^2+(p_0-1)\mathbb{E}\|\sigma_1(Z_{t_{k}}^{j,N})\|^2\\\leq&(1+l_1\Delta)\mathbb{E}|Z_{{0}}^{j,N}|^2+2\Delta\mathbb{E}\|\sigma_1(Z_{{0}}^{j,N})\|^2-(p_0-3)\Delta\sum_{r=1}^{k-1}\mathbb{E}\|\sigma_1(Z_{t_{r}}^{j,N})\|^2\\&+(l_2\Delta+2d_2\Delta-l_1\Delta)\sum_{r=0}^{k-1}\mathbb{E}|Z_{t_{r}}^{j,N}|^2.
			\end{split}
		\end{equation*}
		An application of the discrete-type Gronwall inequality leads to the result that
		\begin{equation*}
			\begin{split}
			&	\mathbb{E}|Z_{t_{k}}^{j,N}|^2\\\leq&\left((1+l_1\Delta)\mathbb{E}|Z_{{0}}^{j,N}|^2+2\Delta\mathbb{E}\|\sigma_1(Z_{{0}}^{j,N})\|^2\right)\exp\left((l_2+2d_2-l_1)k\Delta\right),
			\end{split}
		\end{equation*}
		which implies that the rate of mean-square stability is $l_1-l_2-2d_2$ as $\Delta\rightarrow0$.
	\end{proof}
	\subsection{Almost sure stability of backward EM scheme }
	To derive the almost sure exponential stability of backward EM scheme, the following assumption which relates to the dimension of Brownian motion is needed.
	\begin{ass}\label{aqsqs}
		For any $x\in\mathbb{R}^d$ and $\mu\in\mathcal{P}_2(\mathbb{R}^d)$, there exist constants ${\tilde{c}_1},{\tilde{c}_2},h_1,h_2>0$ such that
		\begin{equation*}
			2\langle x,b(x,\mu)\rangle\leq-{\tilde{c}_1}|x|^2+{\tilde{c}_2}\mathcal{W}_{2}(\mu)^{2},~~~	m\|\sigma(x,\mu)\|^2\leq h_1|x|^2+h_2\mathcal{W}_{2}(\mu)^{2},
		\end{equation*}
		%		\begin{equation*}
			%			m\|\sigma(x,\mu)\|^2\leq h_1|x|^2+h_2\mathcal{W}_{2}(\mu)^{2},
			%		\end{equation*}
		where $m$ is the dimension of Brownian motion.
	\end{ass}
	
	\begin{thm}\label{almostbc}
		Let Assumptions \ref{ass2},  \ref{ass3}, and \ref{aqsqs}  hold with $ {\tilde{c}_1}>h_1+{\tilde{c}_2}+h_2$. Then, there
		exists a stepsize
		$\tilde{\Delta}_{0}\in (0,1)$ such that
		for any $\Delta\in(0,\tilde{\Delta}_{0})$, the numerical solution $Z_{t_{k}}^{j,N}$ is almost surely exponentially stable, i.e., 
		$$\lim_{k\rightarrow\infty}\frac{1}{k\Delta}\log	\big(\frac{1}{N}\sum_{i=1}^{N}|Z_{t_{k}}^{i,N}|^2\big)\leq -{\beta_{\Delta}^{*}},$$
		with ${\beta_{\Delta}^{*}}$ fulfilling
		$
		\lim_{\Delta\rightarrow0}	{\beta_{\Delta}^{*}}={\tilde{c}_1}-h_1-\tilde{c}_2-h_2.
		$
		The constraint of $\tilde{\Delta}_{1}$ is given in the proof.
	\end{thm}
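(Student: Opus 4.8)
The plan is to run the almost-sure argument of Theorem~\ref{almost} for the implicit scheme, using the elementary identity (\ref{009}) to tame the implicit drift. First I would substitute $Z_{t_{k+1}}^{j,N}-Z_{t_{k}}^{j,N}=b(Z_{t_{k+1}}^{j,N},\mu_{t_{k}}^{Z,N})\Delta+\sigma(Z_{t_{k}}^{j,N},\mu_{t_{k}}^{Z,N})\Delta W_{t_k}^{j}$ into the stochastic term of (\ref{009}); the drift--diffusion cross terms cancel and leave the one-step relation
\begin{equation*}
|Z_{t_{k+1}}^{j,N}|^2=|Z_{t_{k}}^{j,N}|^2+2\Delta\langle Z_{t_{k+1}}^{j,N},b(Z_{t_{k+1}}^{j,N},\mu_{t_{k}}^{Z,N})\rangle-|b(Z_{t_{k+1}}^{j,N},\mu_{t_{k}}^{Z,N})|^2\Delta^2+|\sigma(Z_{t_{k}}^{j,N},\mu_{t_{k}}^{Z,N})\Delta W_{t_k}^{j}|^2.
\end{equation*}
Dropping $-|b|^2\Delta^2\le0$, bounding $|\sigma\Delta W_{t_k}^{j}|^2\le\|\sigma\|^2|\Delta W_{t_k}^{j}|^2$, and peeling off the martingale difference $\gamma_k^j:=\|\sigma\|^2|\Delta W_{t_k}^{j}|^2-m\|\sigma\|^2\Delta$ prepares the relation for Assumption~\ref{aqsqs}. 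Crucially, because the drift dissipativity $2\langle x,b(x,\mu)\rangle\le-\tilde c_1|x|^2+\tilde c_2\mathcal W_2(\mu)^2$ is evaluated at the \emph{new} state $Z_{t_{k+1}}^{j,N}$, the term $-\tilde c_1\Delta|Z_{t_{k+1}}^{j,N}|^2$ moves to the left as a factor $1+\tilde c_1\Delta>0$; unlike the explicit case in Theorem~\ref{almost}, no stepsize restriction is needed to keep this coefficient positive.

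Dividing by $1+\tilde c_1\Delta$ and invoking $m\|\sigma\|^2\le h_1|Z_{t_k}^{j,N}|^2+h_2\mathcal W_2(\mu_{t_k}^{Z,N})^2$ with $\mathcal W_2(\mu_{t_k}^{Z,N})^2\le\frac1N\sum_{i}|Z_{t_k}^{i,N}|^2$ (Remark~\ref{rem1}) produces the analogue of (\ref{as1}), with self-coefficient $\tfrac{1+h_1\Delta}{1+\tilde c_1\Delta}$ and measure-coupling coefficient $\tfrac{(\tilde c_2+h_2)\Delta}{1+\tilde c_1\Delta}$. From here the route is identical to Theorem~\ref{almost}: set $f(\lambda)=\lambda^\Delta\tfrac{1+h_1\Delta}{1+\tilde c_1\Delta}-1$, observe $f(1)=\tfrac{(h_1-\tilde c_1)\Delta}{1+\tilde c_1\Delta}<0$ (since $\tilde c_1>h_1$) while $f$ increases to $\infty$, so a unique root $\vartheta_\Delta^*>1$ with $(\vartheta_\Delta^*)^\Delta=\tfrac{1+\tilde c_1\Delta}{1+h_1\Delta}$ exists for \emph{every} $\Delta\in(0,1)$. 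Multiplying by $\lambda^{(k+1)\Delta}$, telescoping as in (\ref{ms2}), summing the $N$ particle inequalities and collapsing the coupling through the $\Upsilon_k$-recursion with $1+z\le e^z$ gives $\exp(k\Delta\beta_\Delta^*)\frac1N\sum_i|Z_{t_k}^{i,N}|^2$ dominated by an initial term plus a stochastic sum, where
\begin{equation*}
\beta_\Delta^*=\frac1\Delta\log\frac{1+\tilde c_1\Delta}{1+h_1\Delta}-\frac{\tilde c_2+h_2}{1+h_1\Delta}.
\end{equation*}

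It remains to verify that the stochastic sum is a martingale and that $\beta_\Delta^*>0$. The first is routine: $\sigma(Z_{t_{k-1}}^{j,N},\mu_{t_{k-1}}^{Z,N})$ is $\mathcal F_{(k-1)\Delta}$-measurable and $\mathbb E[\,|\Delta W_{t_{k-1}}^{j}|^2-m\Delta\mid\mathcal F_{(k-1)\Delta}]=0$ give $\mathbb E[\gamma_{k-1}^{j}\mid\mathcal F_{(k-1)\Delta}]=0$ exactly as in Theorem~\ref{almost}, so Lemma~2.2 in \cite{mao1997} forces a.s. convergence of the right-hand side to a finite limit, yielding the claimed $\limsup$ bound at rate $\beta_\Delta^*$. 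The genuine obstacle is the sign analysis. I would set
\begin{equation*}
g(\Delta)=\frac{(\tilde c_2+h_2)\Delta}{1+h_1\Delta}-\log\frac{1+\tilde c_1\Delta}{1+h_1\Delta},
\end{equation*}
so that $\beta_\Delta^*=-g(\Delta)/\Delta$ and $\beta_\Delta^*>0\iff g(\Delta)<0$. Since $g(0)=0$ and a direct differentiation gives $g'(0)=(\tilde c_2+h_2)-(\tilde c_1-h_1)<0$ under $\tilde c_1>h_1+\tilde c_2+h_2$, continuity of $g'$ yields a threshold $\tilde\Delta_1$ with $g'<0$, hence $g<0$, on $(0,\tilde\Delta_1)$; taking $\tilde\Delta_0=\tilde\Delta_1\wedge1$ secures $\beta_\Delta^*>0$, and L'Hôpital's rule applied to $-g(\Delta)/\Delta$ as $\Delta\to0$ delivers $\lim_{\Delta\to0}\beta_\Delta^*=\tilde c_1-h_1-\tilde c_2-h_2$. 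Locating $\tilde\Delta_1$ precisely through the sign of $g'$ is the only technical step, in the spirit of the $g(\Delta)$ computation in Theorem~\ref{almost}.
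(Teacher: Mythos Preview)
Your approach mirrors the paper's almost exactly: both derive a pathwise one-step inequality by moving the implicit drift contribution $-\tilde c_1\Delta|Z_{t_{k+1}}^{j,N}|^2$ to the left, telescope against an exponential weight chosen as the root of an auxiliary function, sum over particles and collapse the empirical-measure coupling through an $\Upsilon_k$-type recursion, and then invoke the discrete semimartingale convergence lemma. The only cosmetic difference is that you divide through by $1+\tilde c_1\Delta$ before telescoping (giving $(\vartheta_\Delta^*)^\Delta=(1+\tilde c_1\Delta)/(1+h_1\Delta)$), whereas the paper keeps that factor on the left and works with $\tilde g(\lambda)=\lambda^\Delta(1+h_1\Delta-\tilde c_1\Delta)-1$; both routes deliver the same limiting rate $\tilde c_1-h_1-\tilde c_2-h_2$.

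There is, however, an algebraic slip in your one-step identity. The last equality in (\ref{009}) is valid only after taking expectation (which is all Theorem~\ref{mean} needed), not pathwise; the term $2\langle Z_{t_k}^{j,N},\sigma(Z_{t_k}^{j,N},\mu_{t_k}^{Z,N})\Delta W_{t_k}^{j}\rangle$ does \emph{not} cancel. The correct pathwise relation is
\[
|Z_{t_{k+1}}^{j,N}|^2=|Z_{t_k}^{j,N}|^2+2\Delta\langle Z_{t_{k+1}}^{j,N},b\rangle-|b|^2\Delta^2+|\sigma\Delta W_{t_k}^{j}|^2+2\langle Z_{t_k}^{j,N},\sigma\Delta W_{t_k}^{j}\rangle,
\]
so your martingale increment must be $\gamma_k^j=\|\sigma\|^2(|\Delta W_{t_k}^{j}|^2-m\Delta)+2\langle Z_{t_k}^{j,N},\sigma\Delta W_{t_k}^{j}\rangle$, exactly the paper's $\chi_k^j$. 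Since the extra piece is an $\mathcal F_{k\Delta}$-measurable vector paired with $\Delta W_{t_k}^{j}$, it still satisfies $\mathbb E[\gamma_k^j\mid\mathcal F_{k\Delta}]=0$, and the remainder of your argument---martingale convergence, the sign analysis of $g(\Delta)$ via $g'(0)<0$, and the limit of $\beta_\Delta^*$---goes through unchanged.
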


	\begin{proof}
		From (\ref{bcnum}), it is easy to obtain that 
		\begin{equation*}
			\begin{split}
				|Z_{t_{k+1}}^{j,N}|^2=&\langle Z_{t_{k+1}}^{j,N},Z_{t_{k}}^{j,N}+b(Z_{t_{k+1}}^{j,N},\mu_{t_{k}}^{Y,N})\Delta+\sigma(Z_{t_{k}}^{j,N},\mu_{t_{k}}^{Y,N})\Delta W_{t_k}^{j}\rangle\\\leq&|Z_{t_{k}}^{j,N}|^2+2\Delta\langle Z_{t_{k+1}}^{j,N},b(Z_{t_{k+1}}^{j,N},\mu_{t_{k}}^{Y,N})\rangle+m\Delta\|\sigma(Z_{t_{k}}^{j,N},\mu_{t_{k}}^{Y,N})\|^2 +\chi_{k}^{j},
			\end{split}
		\end{equation*}
		where 
		$
		\chi_{k}^{j}=	\|\sigma(Z_{t_{k}}^{j,N},\mu_{t_{k}}^{Y,N})\|^2(|\Delta W_{t_k}^{j}|^2 -m\Delta)	+2\langle Z_{t_{k}}^{j,N},\sigma(Z_{t_{k}}^{j,N},\mu_{t_{k}}^{Y,N})\Delta W_{t_k}^{j}\rangle.
		$
		Using Assumption \ref{aqsqs} gives
		\begin{equation}\label{as111}
			\begin{split}
				&	|Z_{t_{k+1}}^{j,N}|^2\\\leq&|Z_{t_{k}}^{j,N}|^2-{\tilde{c}_1}\Delta|Z_{t_{k+1}}^{j,N}|^2+\frac{{\tilde{c}_2}\Delta}{N}\sum_{i=1}^{N}|Z_{t_k}^{i,N}|^2+h_1\Delta|Z_{t_k}^{j,N}|^2
				+\frac{h_2\Delta}{N}\sum_{i=1}^{N}|Z_{t_k}^{i,N}|^2+\chi_{k}^{j}.
				%	\leq&|Z_{t_{k}}^{j,N}|^2+\Delta(b_1\Delta-{\tilde{c}_1})|Z_{t_k}^{j,N}|^2+\Delta(b_2\Delta+{\tilde{c}_2})\frac{1}{N}\sum_{i=1}^{N}|Z_{t_k}^{i,N}|^2+\chi_{k}^{j}.
			\end{split}
		\end{equation}
		By (\ref{as111}), we have that for any $\lambda>1$,
		\begin{equation*}
			\begin{split}
				&(1+{\tilde{c}_1}\Delta)\lambda^{k\Delta}	|Z_{t_{k}}^{j,N}|^2	\\\leq&(1+{\tilde{c}_1}\Delta)|Z_{0}^{j,N}|^2+
				\sum_{r=0}^{k-1}(1+h_1\Delta-{\tilde{c}_1}\Delta-\lambda^{-\Delta})\lambda^{(r+1)\Delta}|Z_{t_{r}}^{j,N}|^2\\&+ \sum_{r=0}^{k-1}({\tilde{c}_2}\Delta+h_2\Delta)\lambda^{(r+1)\Delta}\frac{1}{N}\sum_{i=1}^{N}|Z_{t_r}^{i,N}|^2+\sum_{r=0}^{k-1}\lambda^{(r+1)\Delta}\chi_{r}^{j}.
			\end{split}
		\end{equation*}
		Let us introduce the function
		\begin{equation}\label{fbc}
			\tilde{g}(\lambda)=\lambda^{\Delta}(1+h_1\Delta-{\tilde{c}_1}\Delta)-1.
		\end{equation}
%		For any $\lambda>1$, one can see $	\tilde{g}'(\lambda)>0$ and $\lim_{\lambda\rightarrow{\infty}}g(\lambda)={\infty}$. And we have $\tilde{g}(1)<0$ with $h_1<{\tilde{c}_1}$.
	For any $\lambda>1$, one can see that $\tilde{g}(1)<0$ with $h_1<{\tilde{c}_1}$. Moreover, there has a $\tilde{\Delta}_0$ such that $\tilde{g}'(\lambda)>0$ and $\lim_{\lambda\rightarrow{\infty}}g(\lambda)={\infty}$ for $\Delta<\tilde{\Delta}_0\wedge1$.
		Thus, there exists a unique ${{\eta_{\Delta}^{*}}}>1$ which is related to the stepsize $\Delta$, such that $	\tilde{g}({{\eta_{\Delta}^{*}}})=0$. By taking $\lambda={{\eta_{\Delta}^{*}}}$ and summing $j$ from $1$ to $N$, we get the result
		\begin{equation*}
			\begin{split}
				&	\sum_{i=1}^{N}	{{\eta_{\Delta}^{*}}}^{k\Delta}	|Z_{t_{k}}^{i,N}|^2
				\\	\leq& (1+{\tilde{c}_1}\Delta)\sum_{i=1}^{N}|Z_{0}^{i,N}|^2+({\tilde{c}_2}\Delta+h_2\Delta)\sum_{r=0}^{k-1}\sum_{i=1}^{N}	{{\eta_{\Delta}^{*}}}^{(r+1)\Delta}|Z_{t_{r}}^{i,N}|^2+\sum_{r=0}^{k-1}\sum_{i=1}^{N}{{\eta_{\Delta}^{*}}}^{(r+1)\Delta}\chi_{r}^{i}.
			\end{split}
		\end{equation*}
		For $r\in\{0,1,\cdots,k-1\}$, define $U_r=	\sum_{i=1}^{N}{{\eta_{\Delta}^{*}}}^{r\Delta}	|Z_{t_{r}}^{i,N}|^2$, $P_r=\sum_{i=1}^{N}{{\eta_{\Delta}^{*}}}^{(r+1)\Delta}\chi_{r}^{i}$, and
		\begin{equation*}
			\begin{split}
				\varLambda_k
				= (1+{\tilde{c}_1}\Delta)\sum_{i=1}^{N}|Z_{0}^{i,N}|^2+{\eta_{\Delta}^{*}}^\Delta({\tilde{c}_2}\Delta+h_2\Delta)\sum_{r=0}^{k-1}U_r+\sum_{r=0}^{k-1}P_r,
			\end{split}
		\end{equation*}
		which means $U_r\leq \varLambda_r$ for $r\in\{0,1,\cdots,k\}$. Then,
		\begin{equation}\label{interbc}
			\begin{split}
				\varLambda_k=&\varLambda_{k-1}
				+ {\eta_{\Delta}^{*}}^\Delta({\tilde{c}_2}\Delta+h_2\Delta)U_{k-1}+P_{k-1}
				\\	\leq& \big(1+{\eta_{\Delta}^{*}}^\Delta({\tilde{c}_2}\Delta+h_2\Delta)\big)\varLambda_{k-1}+P_{k-1}.
			\end{split}
		\end{equation}
		%provided by
		%	\begin{equation}\label{Xx}
			%		\begin{split}
				%			\varLambda_k-\varLambda_{k-1}
				%			= {{\eta_{\Delta}^{*}}}^\Delta(e_2\Delta+h_2\Delta)U_{k-1}+P_{k-1}.
				%		\end{split}
			%	\end{equation}
		Iterating (\ref{interbc}) gives that
		$$
		\varLambda_k
		\leq \big(1+{\eta_{\Delta}^{*}}^\Delta({\tilde{c}_2}\Delta+h_2\Delta)\big)^{k-1}\varLambda_{1}+\sum_{r=0}^{k-1}\big(1+{\eta_{\Delta}^{*}}^\Delta({\tilde{c}_2}\Delta+h_2\Delta)\big)^{k-1-r}P_{r}.
		$$
		Thus,
		\begin{equation}\label{eeebc}
			\begin{split}
				&	\sum_{i=1}^{N}{\eta_{\Delta}^{*}}^{k\Delta}	|Z_{t_{k}}^{i,N}|^2\\
				\leq& \exp\big({k\Delta {\eta_{\Delta}^{*}}^\Delta({\tilde{c}_2}+h_2)}\big)	\left((1+{\tilde{c}_1}\Delta)\sum_{i=1}^{N}|Z_{0}^{i,N}|^2+\sum_{r=1}^{k-1}\big(1+{\eta_{\Delta}^{*}}^\Delta({\tilde{c}_2}\Delta+h_2\Delta)\big)^{-r}P_{r}\right).
			\end{split}
		\end{equation}
		Dividing both sides of (\ref{eeebc}) by $N \exp\big({k\Delta {\eta_{\Delta}^{*}}^\Delta({\tilde{c}_2}+h_2)}\big)$, we have
		\begin{equation}\label{surebc}
			\begin{split}
				&	\Big(\frac{{\eta_{\Delta}^{*}}^{k\Delta}}{ \exp\big({k\Delta {\eta_{\Delta}^{*}}^\Delta({\tilde{c}_2}+h_2)}\big)}\Big)\frac{1}{N}\sum_{i=1}^{N}	|Z_{t_{k}}^{i,N}|^2
				\\	\leq&(1+{\tilde{c}_1}\Delta)	\frac{1}{N}\sum_{i=1}^{N} |Z_{0}^{i,N}|^2+\frac{1}{N}\sum_{r=1}^{k-1}\big(1+{\eta_{\Delta}^{*}}^\Delta({\tilde{c}_2}\Delta+h_2\Delta)\big)^{-r}P_{r}.
			\end{split}
		\end{equation}
		Define 
		$$
		\tilde{H}_k=	(1+{\tilde{c}_1}\Delta)\frac{1}{N}\sum_{i=1}^{N}|Z_{0}^{i,N}|^2+\frac{1}{N}\sum_{r=1}^{k-1}\big(1+{\eta_{\Delta}^{*}}^\Delta({\tilde{c}_2}\Delta+h_2\Delta)\big)^{-r}P_{r}.
		$$
		%	where $U_k=\frac{1}{N} \sum_{l=1}^{k-1}\sum_{i=1}^{N}\big(1+{\eta_{\Delta}^{*}}^\Delta(e_2\Delta+h_2\Delta)\big)^{-l}{\eta_{\Delta}^{*}}^{(l+1)\Delta}\chi_{l}^{i}$.
		It is straightforward to verify that the second term in $	\tilde{H}_k$ is a martingale through conventional approaches. 
		Then, the application of semimartingale convergence theorem yields that, for any $j\in\mathbb{S}_{N}$,
		\begin{equation}\label{asbc}
			\lim_{k\rightarrow\infty}\zeta^{*}_{k}\frac{1}{N}\sum_{i=1}^{N}|Z_{t_{k}}^{i,N}|^2\leq\lim_{k\rightarrow\infty}	\tilde{H}_k(\omega)<\infty~~~~a.s.
		\end{equation}
		%where $\eta^{*}_{k}=\frac{{\eta_{\Delta}^{*}}^{k\Delta}}{e^{(k-1)\Delta {\eta_{\Delta}^{*}}^\Delta(b_2\Delta+a_2)}}$ and ${\eta_{\Delta}^{*}}$ is a positive constant which is larger than $e$. 
		with $\zeta^{*}_{k}={\eta_{\Delta}^{*}}^{k\Delta}/ \exp\big({k\Delta {\eta_{\Delta}^{*}}^\Delta({\tilde{c}_2}+h_2)}\big)$. 
		And there exist $\varrho_{\Delta}^{*}$ and $\kappa_{\Delta}^{*}$ such that $\zeta_{k}^{*}=\exp(k\Delta\varrho_{\Delta}^{*})$ and $\eta^{*}_{\Delta}=e^{\kappa_{\Delta}^{*}}$, then (\ref{asbc}) can be regarded as
		%Let ${\tau_{\Delta}^{*}}=\log{\eta_{\Delta}^{*}}$ and ${\xi_{\Delta}^{*}}=\frac{\log{\eta^{*}_{k}}}{k\Delta}$, then (\ref{as}) reads
		$$
		\lim_{k\rightarrow\infty}\exp(k\Delta\varrho_{\Delta}^{*})\frac{1}{N}\sum_{i=1}^{N}|Z_{t_{k}}^{i,N}|^2<\infty,
		$$
		with $
		{\varrho_{\Delta}^{*}}={\kappa_{\Delta}^{*}}- {\eta_{\Delta}^{*}}^\Delta({\tilde{c}_2}+h_2)$.
		By $\tilde{g}(\eta_{\Delta}^{*})=0$, we obtain
		\begin{equation*}
			h_1-{\tilde{c}_1}+\frac{(1-{\eta_{\Delta}^{*}}^{-\Delta})}{\Delta}=		h_1-{\tilde{c}_1}+\frac{(1-\exp(-\Delta{\kappa_{\Delta}^{*}}))}{\Delta}=0,
		\end{equation*}
		which implies
		$
		\lim_{\Delta\rightarrow0}\kappa_{\Delta}^{*}	={\tilde{c}_1}-h_1.
		$
	By (\ref{fbc}), ${\varrho_{\Delta}^{*}}$ can be written as:
			$$
	\varrho_{\Delta}^{*}=\kappa_{\Delta}^{*}-\frac{{\tilde{c}_2}+h_2}{1+h_1\Delta-{\tilde{c}_1}\Delta}.
	$$
	Then, 
	$$
\lim_{\Delta\rightarrow0}	\varrho_{\Delta}^{*}=\lim_{\Delta\rightarrow0}\kappa_{\Delta}^{*}-\lim_{\Delta\rightarrow0}\frac{{\tilde{c}_2}+h_2}{1+h_1\Delta-{\tilde{c}_1}\Delta}={\tilde{c}_1}-h_1-{\tilde{c}_2}-h_2.
	$$
%		Define
%		$$
%		w(\Delta)=	\log(\frac{1+{\tilde{c}_1}\Delta}{1+h_1\Delta})-\frac{({\tilde{c}_2}+h_2)(\Delta+{\tilde{c}_1}\Delta^2)}{1+h_1\Delta}.
%		$$
%		It is easy to obtain $w(0)=0$ and 
%		\begin{equation*}
%			\begin{split}
%				w'(\Delta)
%				=&\frac{-({\tilde{c}_2}+h_2)\left({{\tilde{c}_1}^2}{h_1}\Delta^3-(2{{\tilde{c}_1}^2}+{\tilde{c}_1}{h_1})\Delta^2\right)+R_{c,h}\Delta+\tilde{R}_{c,h}}{(1+{\tilde{c}_1}\Delta)(1+h_1\Delta)^2},
%			\end{split}
%		\end{equation*}
%		with $R_{c,h}:=h_1({\tilde{c}_1}-h_1)-3{\tilde{c}_1} ({\tilde{c}_2}+ h_2)$ and $\tilde{R}_{c,h}:={\tilde{c}_1}-h_1-{\tilde{c}_2}-h_2$.
%		%=&\frac{2b_1\Delta-{\tilde{c}_1}}{1+b_1\Delta^2-{\tilde{c}_1}\Delta}+\frac{(2b_2\Delta+{\tilde{c}_2})(1+b_1\Delta^2-{\tilde{c}_1}\Delta)-(2b_1\Delta-{\tilde{c}_1})(b_2\Delta^2+{\tilde{c}_2}\Delta)}{(1+b_1\Delta^2-{\tilde{c}_1}\Delta)^2}\\
%		Through a series of straightforward analysis, one can obtain that there exists a $\tilde{\Delta}_1$ such that $w'(\Delta)>0$ and $w(\Delta)>0$ for $\Delta\in(0,\tilde{\Delta}_1)$, that is $\xi_{\Delta}^{*}>0$. Moreover, we see
%		$$
%		\lim_{\Delta\rightarrow0}	{\varrho_{\Delta}^{*}}=\lim_{\Delta\rightarrow0}{\kappa_{\Delta}^{*}}-\lim_{\Delta\rightarrow0}\big( {\eta_{\Delta}^{*}}^\Delta({\tilde{c}_2}+h_2)\big)={\tilde{c}_1}-h_1-{\tilde{c}_2}-h_2.
%		$$
		Then, the desired result can be obtained.
	\end{proof}

	\section{Numerical examples}
	
	In this section, four numerical examples are demonstrated to highlight the importance of the stability of the numerical scheme.
	
	%we demonstrate the application of stability of EM scheme for SMVE in the stochastic opinion dynamics model and  feedback control problem. 
	
	\begin{exa}
		{\rm
			Consider stochastic opinion dynamics model with a stubborn opinion in \cite{opinion}:
			\begin{equation}\label{exm1}
				dX_t=\big( f(t)[\bar{X}_t\textbf{1}-X_t]-g(t)X_t\big) dt+\sigma X_tdW_t,
			\end{equation}
			with random initial data $X_0\sim\mathcal{N}(2,1)$, where $\mathcal{N}(\cdot,\cdot)$ is normal distribution.
			%$\omega$ is a random variable following the normal distribution.
			
			Let $X_t=\big[X_t^1,X_t^2,\cdots,X_t^N\big]^{T}$ be a vector in which each component $X_t^i$ represents the opinion held by each individual at time $t$. And $\bar{X}_t=(1/N)\sum_{i=1}^{N}X_t^i$ denotes the average opinion. Here, $ \textbf{1}=[1,1,\cdots,1]^{T}$.
			
			The part $f(t)[\bar{X}_t\textbf{1}-X_t]$ represents the interaction between an individual and its neighbors, and the other part $-g(t)X_t$ means the interaction between an individual and the stubborn ones. Compared with additive noise, the diffusion term of (\ref{exm1}) can take fading, frequency selectivity, interference into account.
			
			Let $f(t)=1, g(t)=\frac{5}{2},\sigma=1$. This means the drift coefficient $b(x,\mu)=-\frac{7}{2}x+\mu$ and the diffusion coefficient $\sigma(x,\mu)=x$. Then, we have  
			\begin{equation*}
				2\langle x,b(x,\mu)\rangle+\|\sigma(x,\mu)\|^2\leq-5x^2+ \mathcal{W}_{2}(\mu)^{2},
			\end{equation*}
			and
			\begin{equation*}
				2\langle x-y,b(x,\mu)-b(y,\nu)\rangle+\|\sigma(x,\mu)-\sigma(y,\nu)\|^2\leq-5|x-y|^2+\mathbb{W}_{2}(\mu,\nu)^{2},
			\end{equation*}
			which implies that the coefficients of (\ref{exm1}) satisfy the conditions in theorems with $b_1=7$, $b_2=2$, $a_1=5$, $a_2=1$.
			
			Therefore, on the basis of theories in the above sections, we can use EM scheme to simulate the stability of  (\ref{exm1}). The stability of (\ref{exm1}) implies that the system's response to the changes in its environment becomes less sensitive. By choosing $\Delta=10^{-2}$, which satisfies the restrictions on stepsize of the theoritical results, the simulations of interacting particle system corresponding to (\ref{exm1}) are shown below.

			To illustrate the mean-square stability result in Theorem \ref{meanE}, we simulate 1000 particles with 100 trajectories, and we observe that the sample average of particle average is stable in Figuer \ref{tu1}. 
			
			To show the almost sure stability result in Theorem \ref{almost}, we simulate 100 trajectories, and we find out that the average of 1000 particles' numerical solutions driven by each trajectory tends to zero in Figure \ref{tu2}. 
			
			% For every $j\in\mathbb{S}_N$, we compute the average of the numerical solutions simulated by 1000 sample trajectories $\mathbb{E}|X_{t}^{j,N}|^2$ and plot it in Figuer \ref{tu2}.
			
			% The mean square stability result of each particle is revealed in Figuer \ref{tu2}. 

			\begin{figure}[htbp]
				\centering	
				\begin{minipage}{0.49\linewidth}
					\centering
					\includegraphics[width=0.9\linewidth]{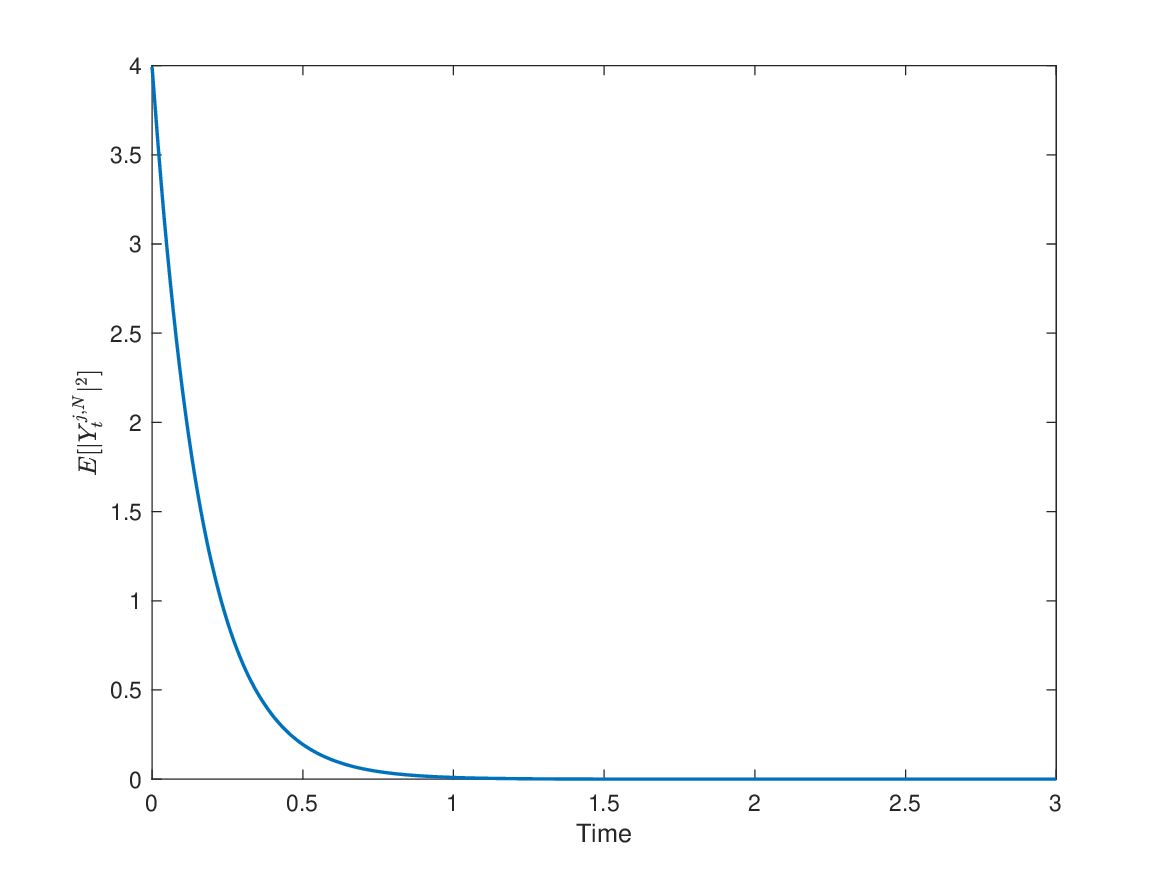}
					\caption{\label{tu1}  mean-square stability of (\ref{exm1})}
				\end{minipage}
				\begin{minipage}{0.49\linewidth}
					\centering
					\includegraphics[width=0.9\linewidth]{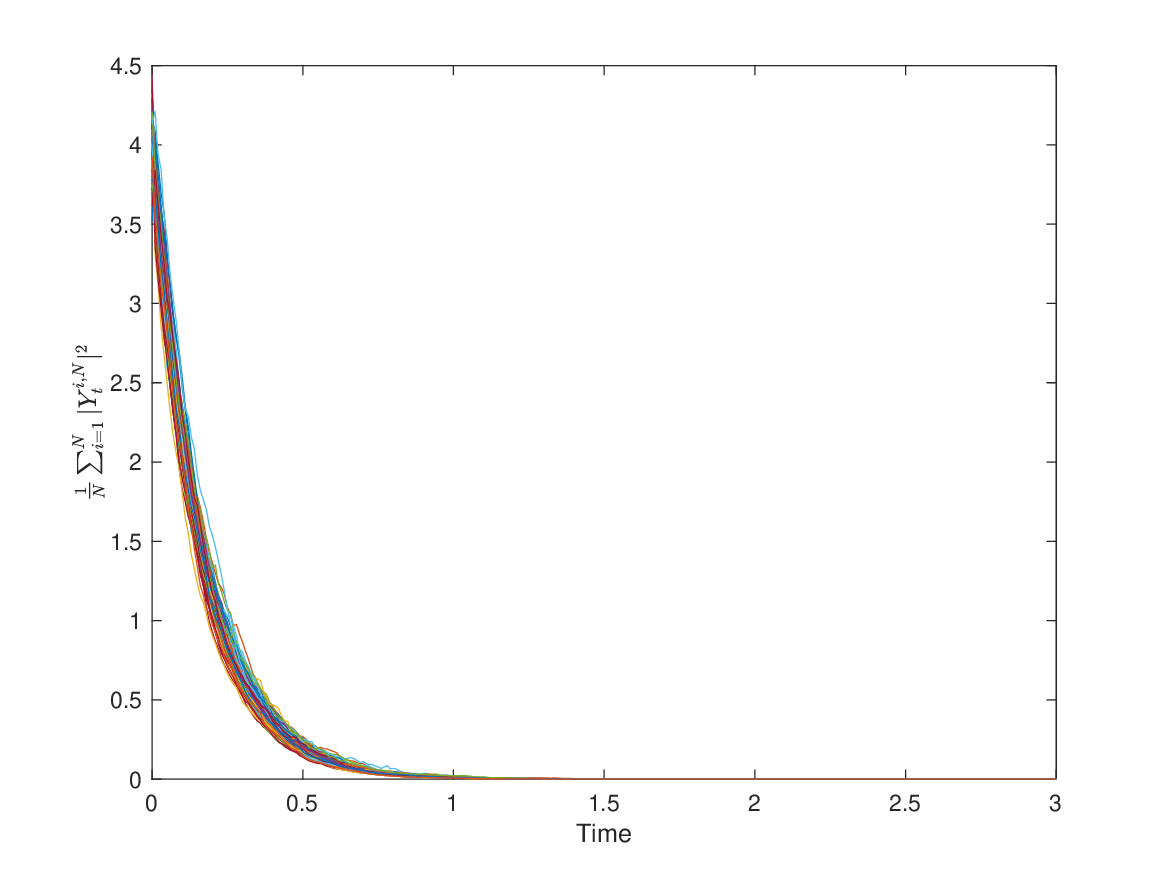}
					\caption{\label{tu2} the stability with 100 trajectories }
				\end{minipage}
				%(\ref{exm1})
			\end{figure}
			
			To check the assertion in Remark \ref{remean}, we choose three different stepsizes $\Delta=0.005,0.3,0.4$. We infer from Theorem \ref{meanE} that the rate of mean-square stability is $a_2-a_1=-4$ as $\Delta\rightarrow0$. By using 1000 particles and 100 trajectories, we find out that the rate of mean-square stability gradually approaches to $-4$ as the stepsize decreases in Figure \ref{tu0}, which supports Remark \ref{remean}.
			\begin{figure}[htbp] 
				\centering
				\includegraphics[height=7.4cm,width=9cm]{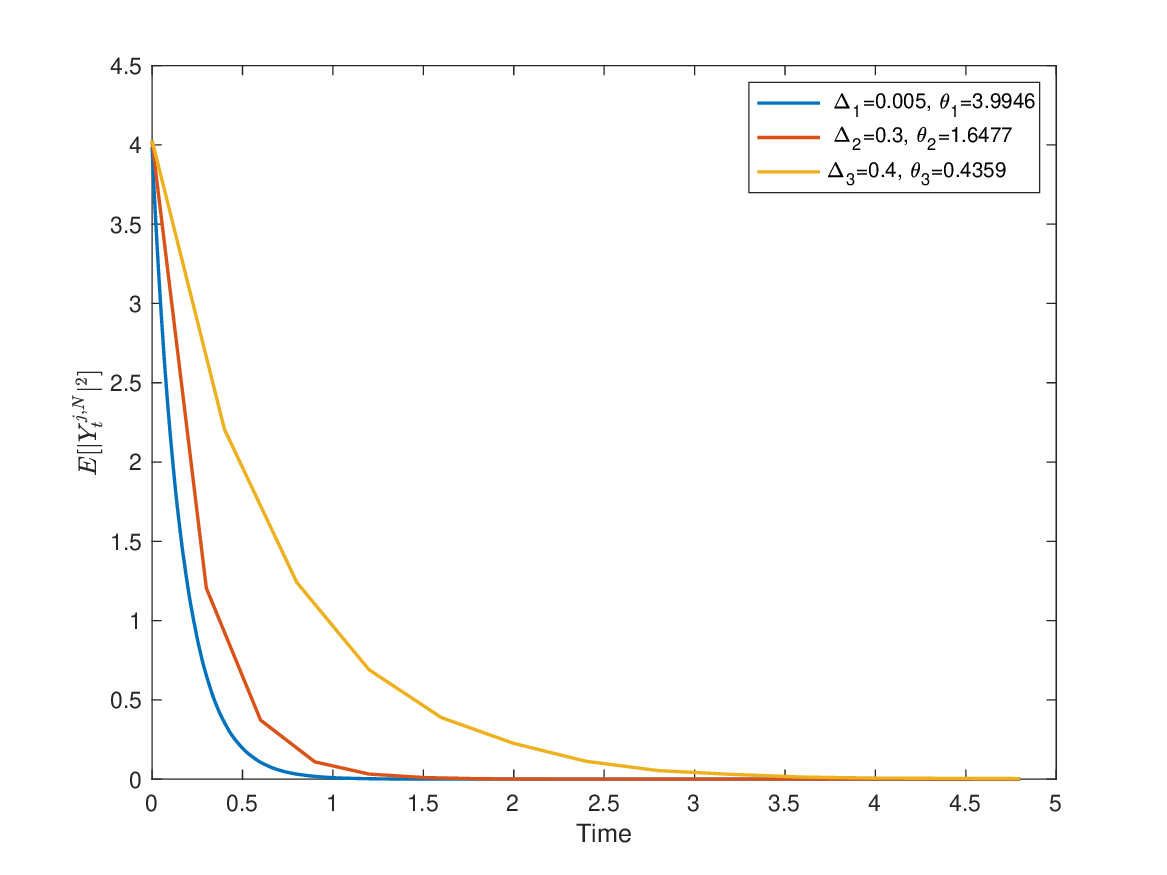}
				\caption{\label{tu0} mean-square stability with three stepsizes}
			\end{figure}
		}
	\end{exa}

	\begin{exa}
		{\rm
			Consider the following  scalar SMVE:
			\begin{equation}\label{exm3}
				dX_t=\big(-\frac{7}{2}X_t+\mathbb{E}X_t\big) dt+\big(X_t+\frac{1}{2}\mathbb{E}X_t\big)dW_t,
			\end{equation}
			with random initial data $X_0\sim\mathcal{N}(2,1)$, where $\mathcal{N}(\cdot,\cdot)$ is normal distribution.
			
			Obviously, the coefficients of (\ref{exm3}) satisfy the required assumptions.
			By means of simulating (\ref{exm3}) with stepsize $\Delta=10^{-2}$, we display the theoritical result that the rate of stability is independent of the number of particles, which is supported by Figure \ref{tu3} and Figure \ref{tu4}.
			
			From Figure \ref{tu3}, we can see that the three lines almost coincide, which exhibits that the rate of stability in mean-square sense simulated by 100 trajectories is not greatly affected by the number of particles. 
			Figure \ref{tu4} displays that the average of $N$ particles' numerical solutions driven by one trajectory tends to zero at nearly the same speed.
			
			\begin{figure}[htbp]
				\centering
				\begin{minipage}{0.49\linewidth}
					\centering
					\includegraphics[width=0.9\linewidth]{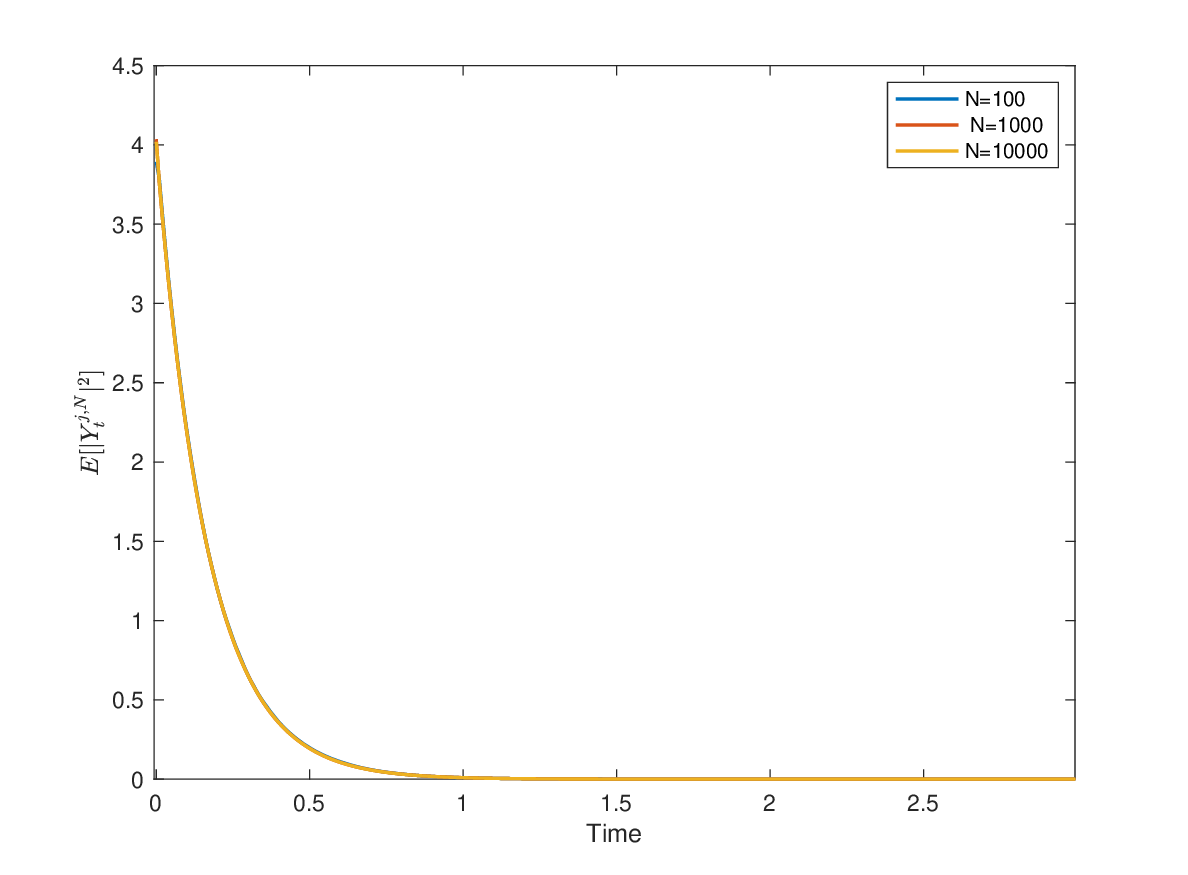}
					\caption{\label{tu3}  mean-square stability of (\ref{exm3})}
				\end{minipage}
				\begin{minipage}{0.49\linewidth}
					\centering
					\includegraphics[width=0.9\linewidth]{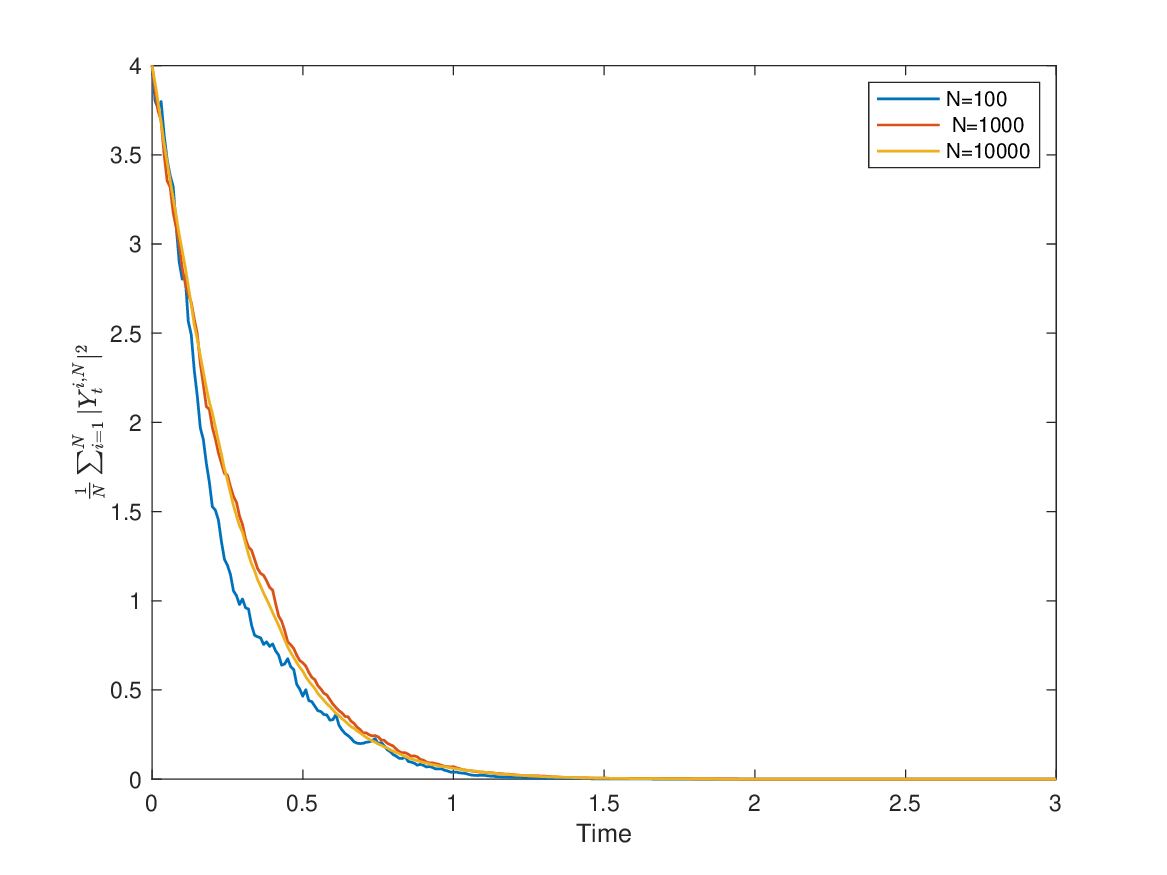}
					\caption{\label{tu4} the stability with 1 trajectory  }
				\end{minipage}
				%(\ref{exm3})
			\end{figure}
		}
	\end{exa}
	%\begin{figure}[htbp]
	%	\centering
	%	\begin{minipage}{0.49\linewidth}
		%		\centering
		%		\includegraphics[width=0.9\linewidth]{5M1000N.eps}
		%		\caption{\label{tu3} the stability of $\mathbb{E}|X_{t}^{j,N}|^2$ with different $N$}
		%	\end{minipage}
	%	%\qquad
	%	\begin{minipage}{0.49\linewidth}
		%		\centering
		%		\includegraphics[width=0.9\linewidth]{boom.eps}
		%		\caption{\label{tu4} the unstability of (\ref{exm2})}
		%	\end{minipage}
	%\end{figure}
	%
	Inspired by \cite{wuhao}, we will take advantage of the following example to illustrate that an unstable system can become stable after exerting control, which supports the theories of \cite{wuhao}. The numerical experiments in this example are simulated with 100 trajectories and 1000 particles.

	\begin{exa}
		{\rm
			Consider the following  scalar SMVE:
			\begin{equation}\label{exm2}
				dX_t=\big(2X_t+\mathbb{E}X_t\big) dt+X_tdW_t,
			\end{equation}
			with random initial data $X_0\sim\mathcal{N}(2,1)$, where $\mathcal{N}(\cdot,\cdot)$ is normal distribution.
			
			We use EM scheme with the stepsize $\Delta=10^{-2}$ to approximate the  interacting particle system w.r.t. (\ref{exm2}).
			From Figure \ref{tu5} and Figure \ref{tu6}, we can see clearly that the numerical solution blows up very quickly, either in almost sure sense or in mean-square sense.
			
			\begin{figure}[htbp]
				\centering
				\begin{minipage}{0.49\linewidth}
					\centering
					\includegraphics[width=0.9\linewidth]{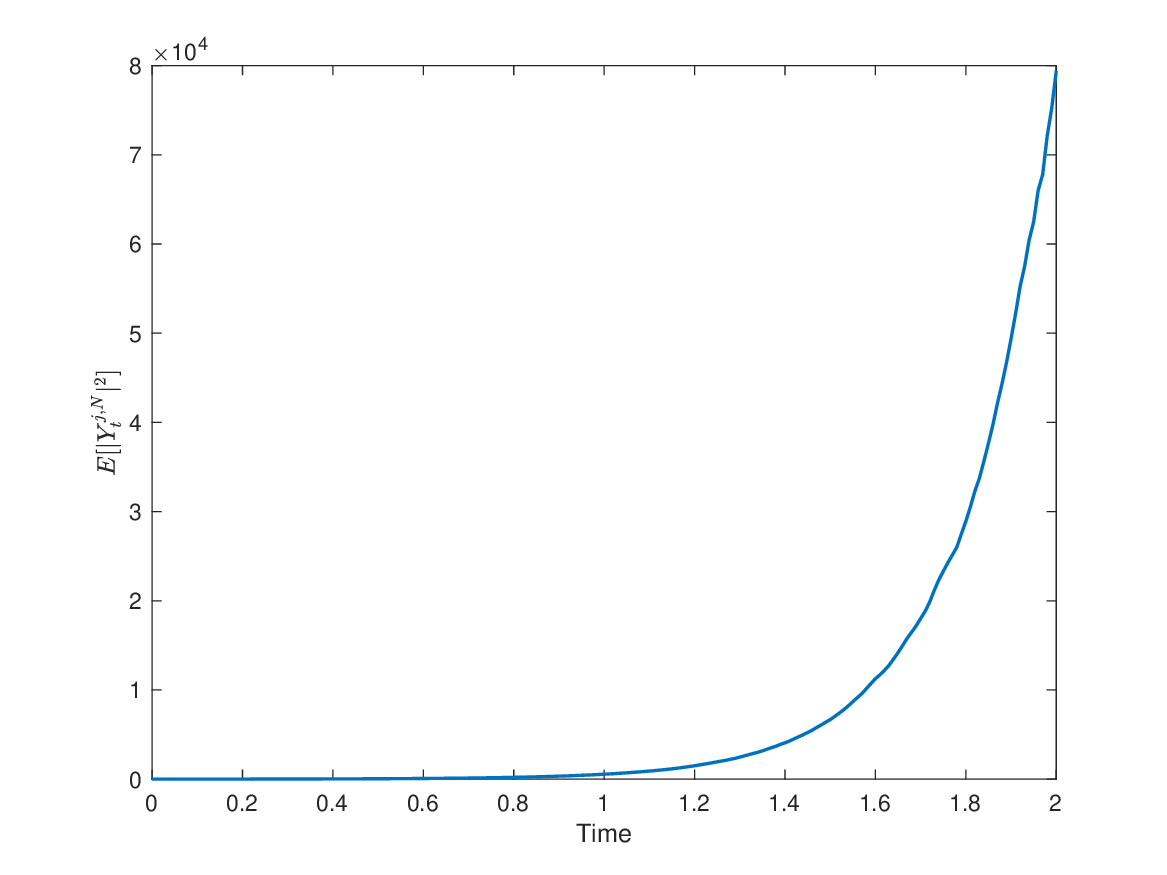}
					\caption{\label{tu5}  mean-square unstability of (\ref{exm2})}
				\end{minipage}
				\begin{minipage}{0.49\linewidth}
					\centering
					\includegraphics[width=0.9\linewidth]{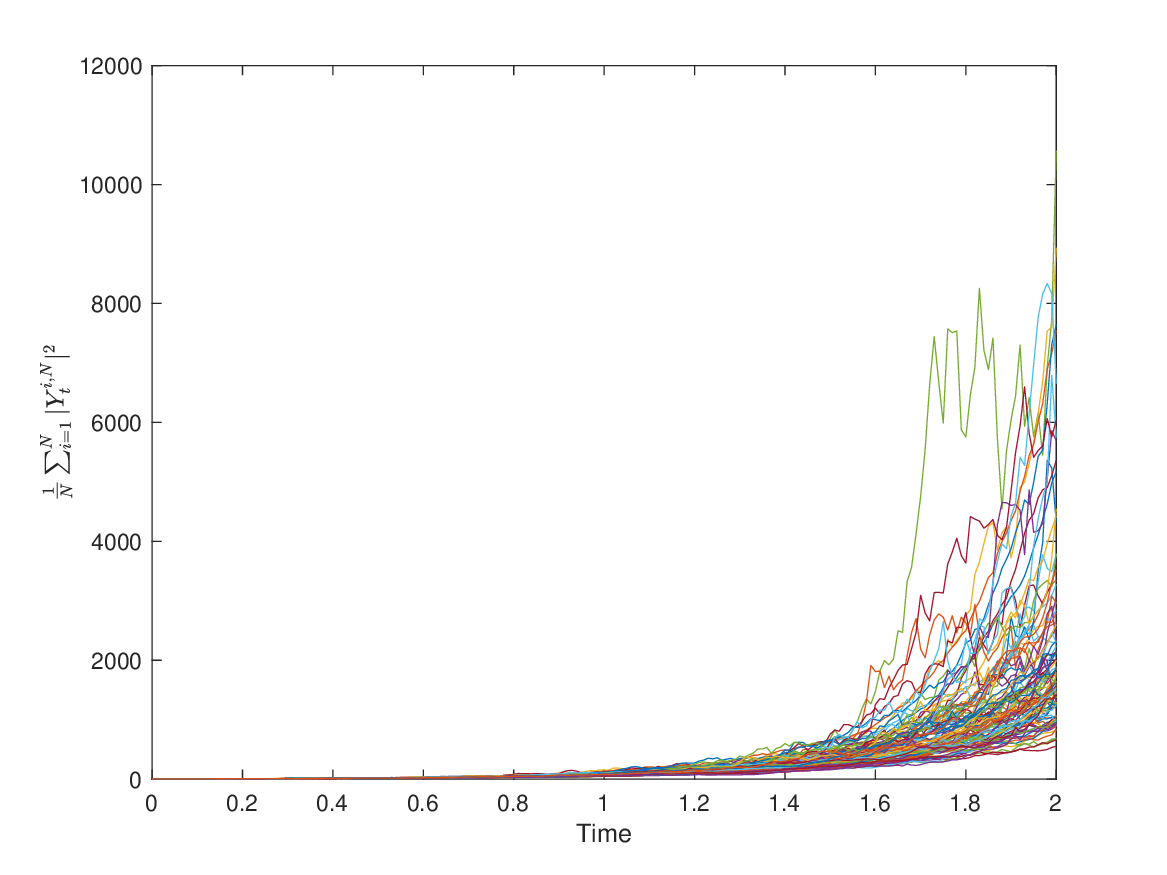}
					\caption{\label{tu6} the unstability with 100 trajectories}
				\end{minipage}
				%\qquad
				
			\end{figure}

			In order to control the system and make it stable, we add feedback control terms to  (\ref{exm2}). Then, a new system with discrete time feedback control is introduced by:
			
			\begin{equation}\label{exm4}
				dX_t=\big(2X_t+\mathbb{E}X_t-k_1X_{t_{\delta}}-k_2\mathbb{E}X_{t_{\delta}}\big) dt+X_tdW_t,
			\end{equation}
			with random initial data $X_0\sim\mathcal{N}(2,1)$. And $t_{\delta}$ is the symbol of discrete times such as $\delta,2\delta,\cdots$, where $\delta$ is the discrete-time observation gap. Here, $k_1,k_2$ are constants.

			We impose discrete-time control on the unstable system only at $t=0.05,0.1,0.15,\cdots$, namely $\delta=0.05$. To display that different values of $k_1,k_2$ will provide different controlled results, we perform numerical simulations with stepsize $\Delta=10^{-2}$ for $k_1=7,k_2=8$ and $k_1=12,k_2=10$, respectively. In the case $k_1=7,k_2=8$, Figure \ref{tu7} and Figure \ref{tu8} show that the unstable system can be controlled within a bounded range after exerting control, both in almost sure sense and mean square sense.
			In the case $k_1=12,k_2=10$, it can be observed from Figure \ref{tu9} and Figure \ref{tu10} that the unstable system can be controlled and tends to 0 as time increasing in two senses. 
			
			\begin{figure}[htbp]
				\centering
				\begin{minipage}{0.49\linewidth}
					\centering
					\includegraphics[width=0.9\linewidth]{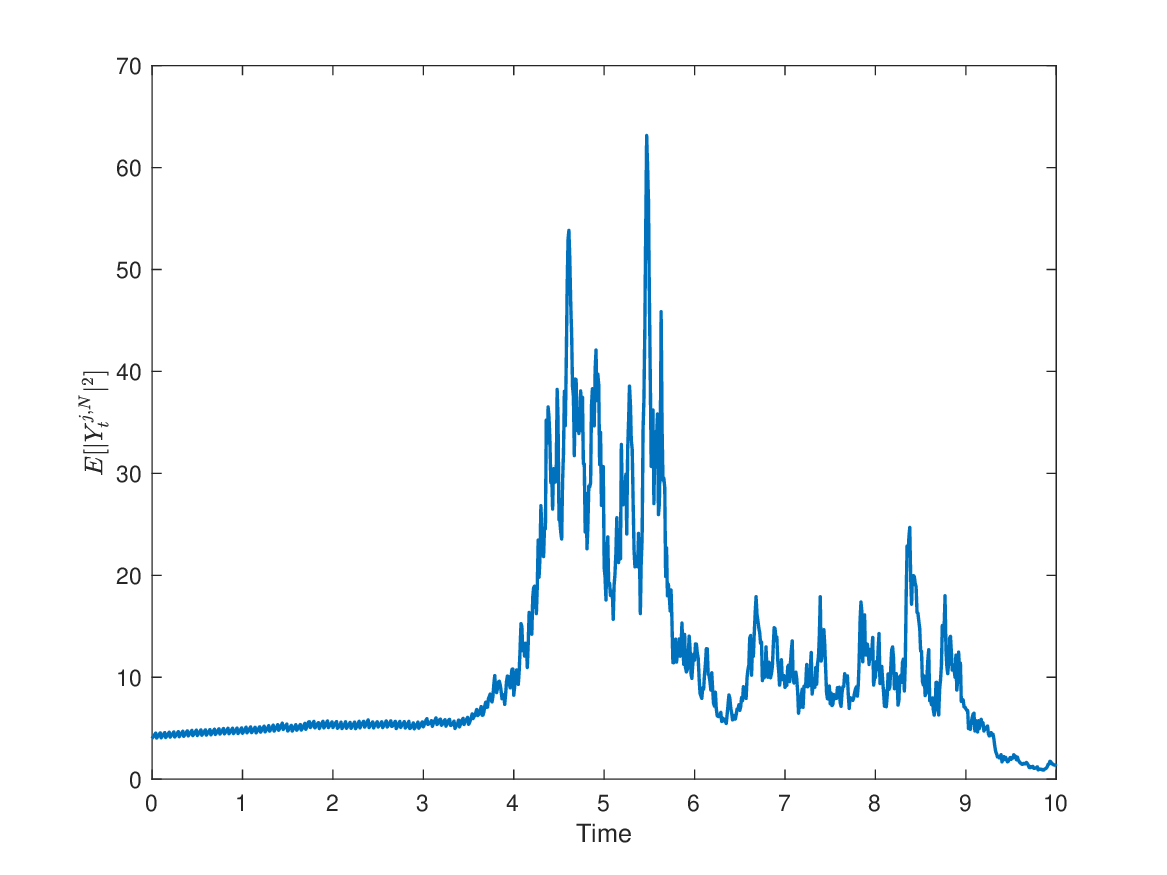}
					\caption{\label{tu7} 	mean-square stability with\\$k_1=7,k_2=8$}
				\end{minipage}
				\begin{minipage}{0.49\linewidth}
					\centering
					\includegraphics[width=0.9\linewidth]{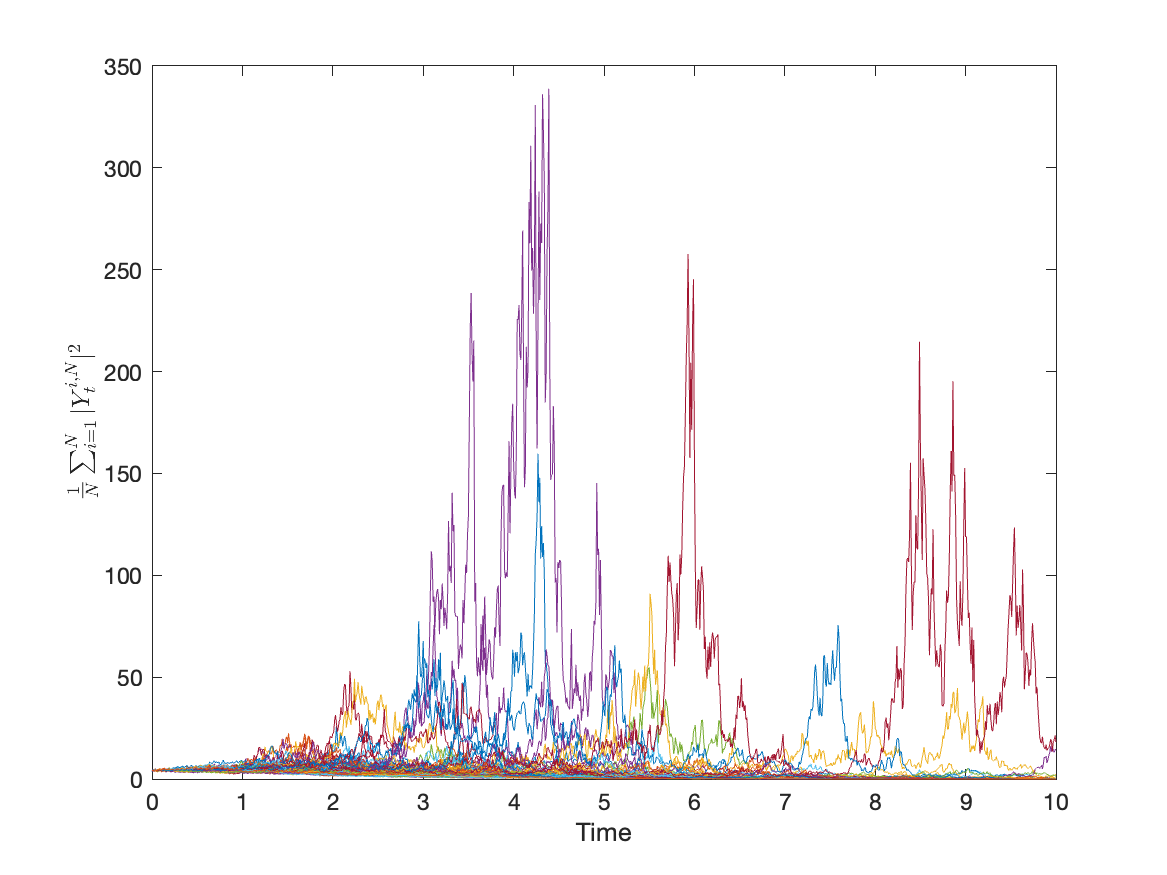}
					\caption{\label{tu8} 100 trajectories with $k_1=7,k_2=8$}
				\end{minipage}
				%\qquad

			\end{figure}
			
			\begin{figure}[htbp]
				\centering
				
				%\qquad
				\begin{minipage}{0.49\linewidth}
					\centering
					\includegraphics[width=0.9\linewidth]{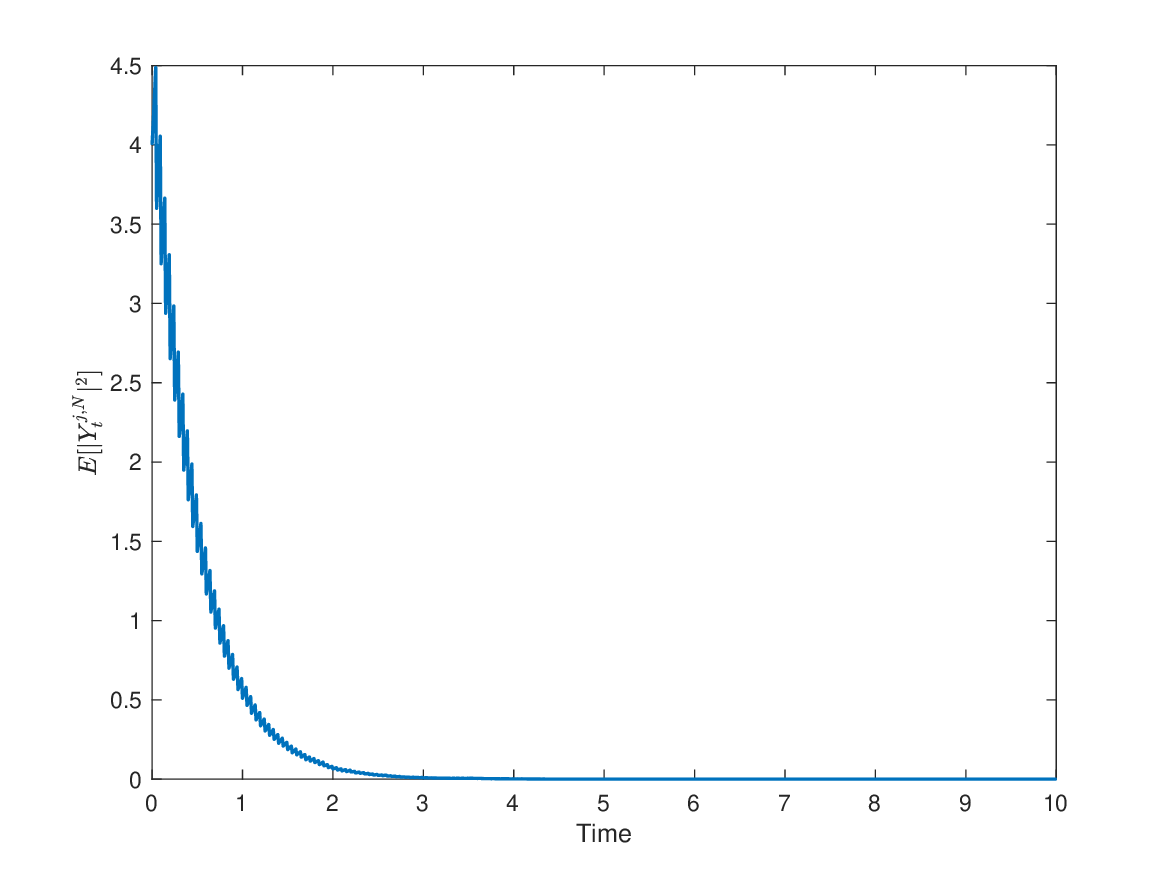}
					\caption{\label{tu9} 	mean-square stability with\\ $k_1=12,k_2=10$ }
				\end{minipage}
				\begin{minipage}{0.49\linewidth}
					\centering
					\includegraphics[width=0.9\linewidth]{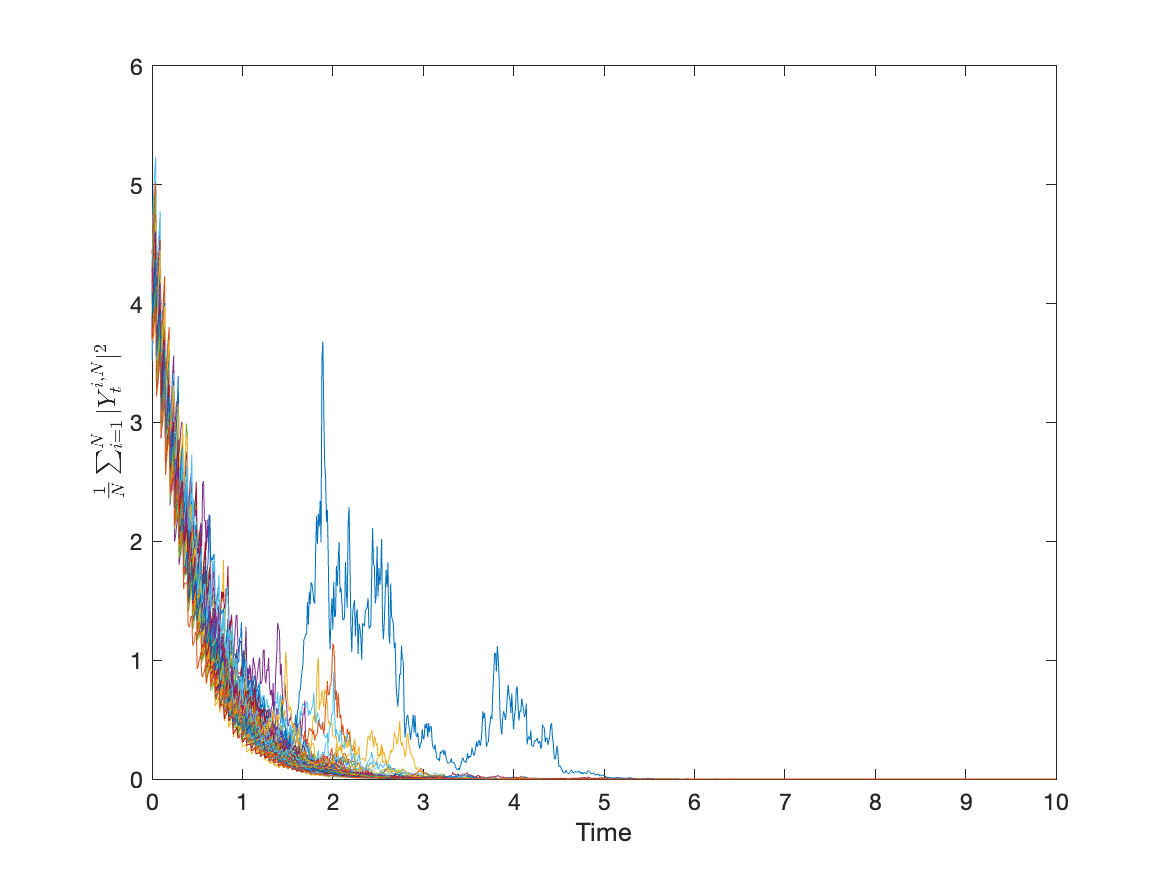}
					\caption{\label{tu10} 100 trajectories with $k_1=12,k_2=10$}
				\end{minipage}
			\end{figure}
		}
	\end{exa}
	
	The following example considers the case when the state variables in coefficients are superlinear.
	\begin{exa}
		{\rm
			Consider the following scalar SMVE
			\begin{equation}\label{example1}
				dX_t=\left(-2X_t^3-4X_t+\sin(\mathbb{E}X_t)\right)dt+\left(\rho_1X_t+\rho_2X_t^2+\sin(\mathbb{E}X_t)\right)dW_t,
			\end{equation}
			with random initial value $X_0$ which follows a normal distribution $\mathcal{N}$(0,4).
			When $\rho_1=0$, $\rho_2=1$, the coefficients of (\ref{example1}) satisfy Assumptions \ref{ass2}, \ref{ass3}, \ref{asas0}. And when $\rho_1=1$, $\rho_2=0$, the coefficients of (\ref{example1}) satisfy Assumptions \ref{ass2}, \ref{ass3}, \ref{aqsqs}.
			Based on the theories in $Section~5$, we choose $\Delta=0.004$ to simulate the corresponding interacting particle system for (\ref{example1}) with $N=300$. The mean-square stability is illustrated in $Figure$ \ref{tu03} with $\rho_1=0$, $\rho_2=1$, and the number of paths $M=500$. To show the almost sure stability, we compute and plot the average of 300 particles’ numerical solutions driven by 30 paths, with $\rho_1=1$, $\rho_2=0$ in $Figure$ \ref{tu04}.
			\begin{figure}[htbp]
				\centering	
				\begin{minipage}{0.49\linewidth}
					\centering
					\includegraphics[width=0.8\linewidth]{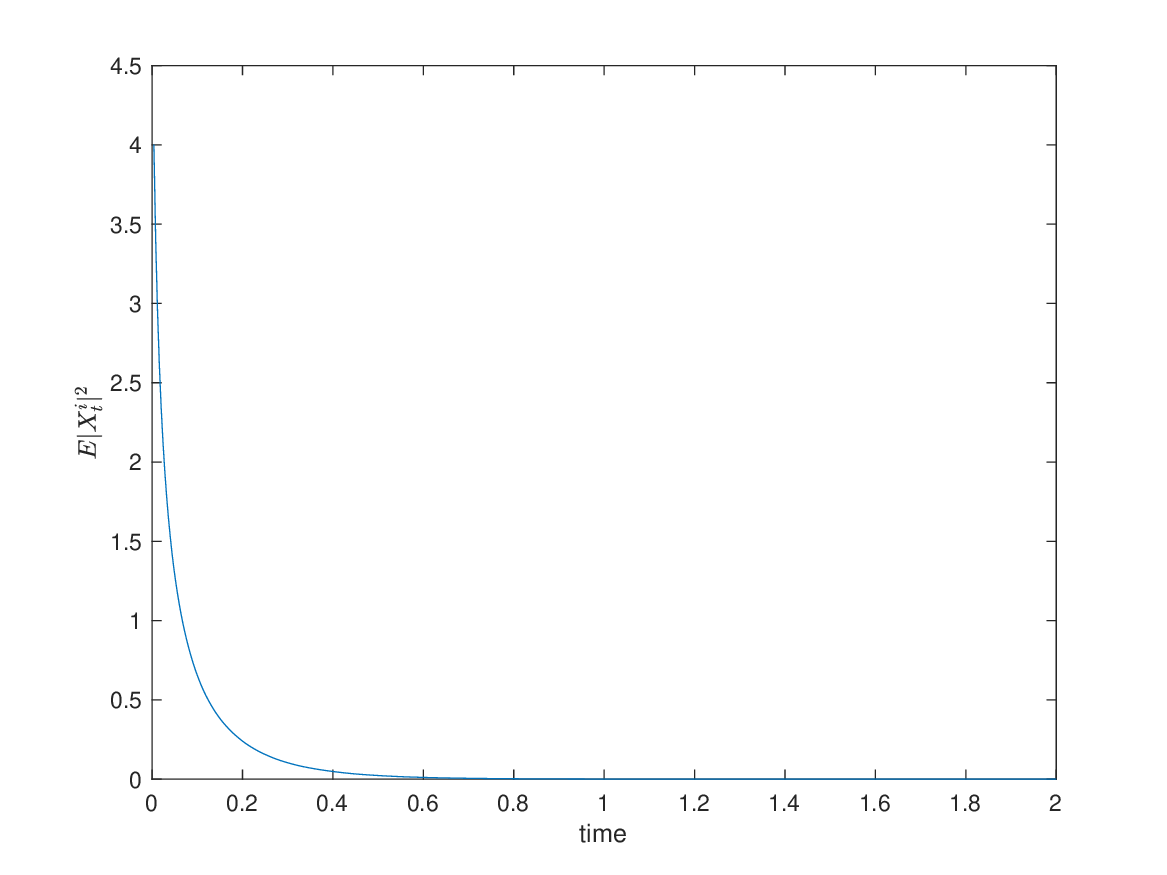}
					\caption{\label{tu03}  mean-square stability of (\ref{example1}) with $\rho_1=0$, $\rho_2=1$}
				\end{minipage}
				\begin{minipage}{0.49\linewidth}
					\centering
					\includegraphics[width=0.8\linewidth]{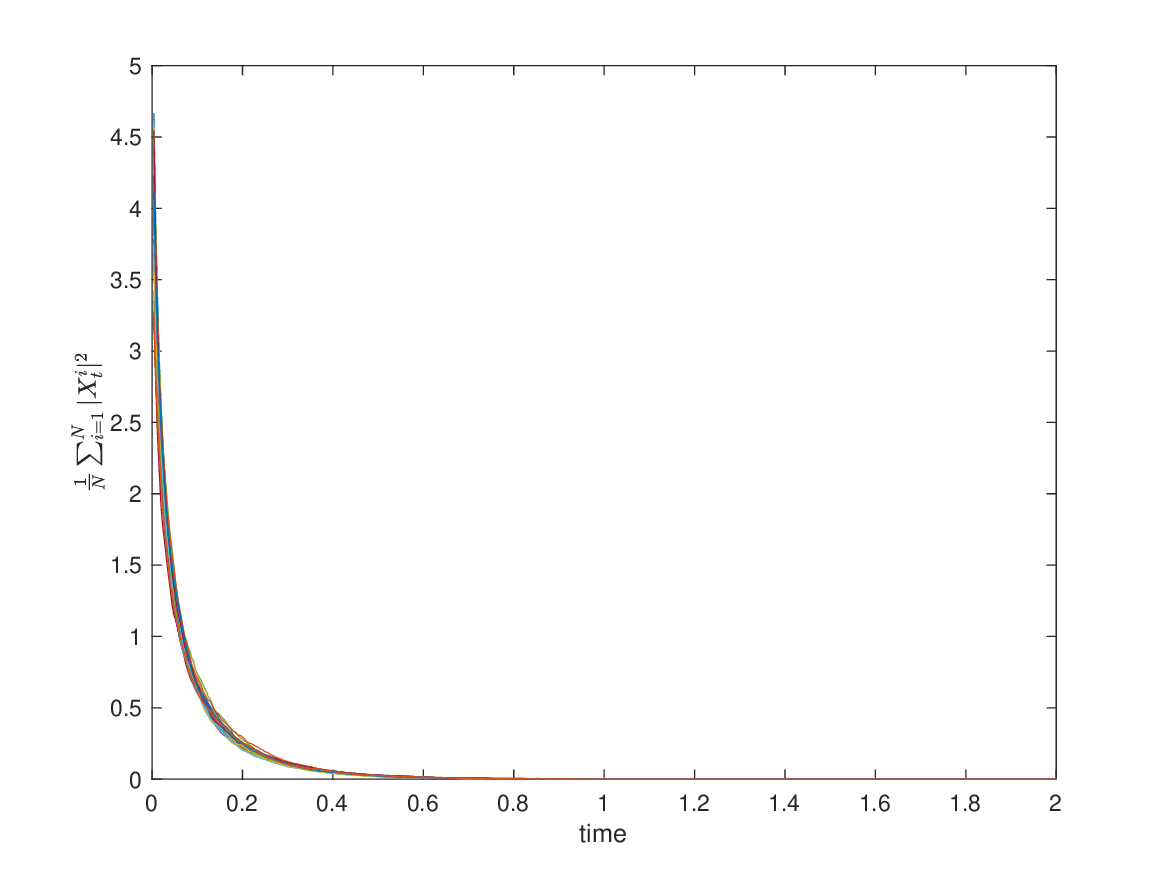}
					\caption{\label{tu04} the stability of 30 trajectories with $\rho_1=1$, $\rho_2=0$}
				\end{minipage}
				%(\ref{exm1})
			\end{figure}
			
		}
	\end{exa}
	\section*{Declaration of competing interest}
	The authors declare that they have no known competing financial interests or personal relationships that could have appeared to influence the work reported in this paper.

	\section*{Data availability}
	No data was used for the research described in the paper.

	\section*{Acknowledgements}
	
	This work is supported by the National Natural Science Foundation of China (12271368, 62373383, 62076106), the Fundamental Research Funds for the Central Universities, South-Central MinZu University (CZQ25020), and China Scholarship Council (202308310272).


\begin{thebibliography}{10}
		
		\bibitem{dosjia}	J. Baladron, D. Fasoli, O. Faugeras, J. Touboul, Mean-field description and propagation of chaos in networks of Hodgkin-Huxley and FitzHugh-Nagumo neurons, J. Math. Neurosci. 2 (2012) 1-50.
		
		\bibitem{bao00} 	J. Bao, C. Reisinger, P. Ren, W. Stockinger,  Milstein schemes and antithetic multilevel Monte-Carlo sampling for delay McKean-Vlasov equations and interacting particle systems, IMA J. Numer. Anal.  44 (4) (2024) 2437-2479.
		
		
		
		
		\bibitem{axina3}
		J. Bao , J. Hao, Uniform-in-time estimates for mean-field type SDEs and applications, J. Differential Equations, 440 (2025) 113445.
		
		\bibitem{001}  J. Bao, X. Huang, Approximations of McKean-Vlasov stochastic differential equations with irregular coefficients, J. Theoret. Probab. 35 (2) (2022) 1187-1215.
		\bibitem{000} 	J. Bao, C. Reisinger, P. Ren, W. Stockinger, First-order convergence of Milstein schemes for McKean-Vlasov equations and interacting particle systems, Proc. R. Soc. A 477 (2245) (2021) 20200258.
		\bibitem{axina1}
		J. Bao , J. Wang, Stationary distributions of McKean-Vlasov SDEs with jumps: existence, uniqueness, and multiplicity, arXiv preprint arXiv:2504.15898, 2025.
		\bibitem{1} M. Bossy, D. Talay, A stochastic particle method for the McKean-Vlasov and the Burgers equation, Math. Comp. 66(217) (1997) 157-192.
		
		\bibitem{0}M. Bossy, D. Talay, Convergence rate for the approximation of the limit law of weakly interacting particles: application to the Burgers equation, Ann. Appl. Probab. 6 (3) (1996) 818-861.
		
		%	\bibitem{caow} W. Cao, M. Liu, Z. Fan, MS-stability of the Euler¨CMaruyama method for stochastic differential delay equations, Appl. Math. Comput. 159(1) (2004) 127-135.
		
		%	\bibitem{chenwu}L. Chen, F. Wu, Almost sure exponential stability of the $\theta$-method for stochastic differential equations, Statist. Probab. Lett. 82(9) (2012) 1669-1676.
		
		\bibitem{01} X. Chen, G.D. Reis, A flexible split-step scheme for solving McKean-Vlasov stochastic differential equations, Appl. Math. Comput. 427 (2022) 127180.
		
		%	\bibitem{deng}S. Deng, W. Fei, W. Liu, X. Mao, The truncated EM method for stochastic differential equations with Poisson jumps, J. Comput. Appl. Math, 355 (2019) 232-257.
		
		\bibitem{2}D. Crisan, E. McMurray, Smoothing properties of McKean-Vlasov SDEs, Probab. Theory Related Fields, 171 (2018) 97-148.
		
		%	\bibitem{qiaojie} X. Ding, H. Qiao, Euler¨CMaruyama approximations for stochastic McKean¨CVlasov equations with non-Lipschitz coefficients, J. Theoret. Probab. 34 (2021) 1408-1425.
		
		\bibitem{qiaosiam} X. Ding, H. Qiao, Stability for Stochastic McKean--Vlasov Equations with Non-Lipschitz Coefficients, SIAM J. Control Optim. 59(2) (2021) 887-905.
		
		\bibitem{dingqiao}X. Ding, H. Qiao,  Euler–Maruyama approximations for stochastic McKean–Vlasov equations with non-Lipschitz coefficients, J. Theoret. Probab. 34(3) (2021) 1408-1425.
		
		\bibitem{3} G.D. Reis, W. Salkeld, J. Tugaut, Freidlin-Wentzell LDP in path space for McKean-Vlasov equations and the functional iterated logarithm law, Ann. Appl. Probab. 29(3) (2019) 1487-1540.
		
		\bibitem{31} G.D. Reis, S. Engelhardt, G.Smith, Simulation of McKean-Vlasov SDEs with super-linear growth, IMA J. Numer. Anal. 42(1) 2022.
		
		\bibitem{4}W. Dreyer, M. Gaber\v{s}\v{c}ek, C. Guhlke, et al. Phase transition in a rechargeable lithium battery, European J. Appl. Math. 22(3) (2011) 267-290.
		
		
		\bibitem{11}A. Eberle, A. Guillin, R. Zimmer, Quantitative Harris-type theorems for diffusions and McKean-Vlasov processes, Trans. Amer. Math. Soc. 371 (2019), 7135-7173.
		\bibitem{axina2}
		R. Erban, J. Ha\v{s}kovec, Y. Sun, A Cucker–Smale model with noise and delay, SIAM J. Appl. Math. 76(4) (2016) 1535–1557.
		
		
		
		\bibitem{fougui} N. Fournier and A. Guillin, On the rate of convergence in Wasserstein distance of the empirical measure. Probab. Theory Related Fields, 162(3):707-738, 2015.
		
		%	\bibitem{12}S. Gao, Q. Guo, J. Hu, C. Yuan, Convergence rate in $L^p$ sense of tamed EM scheme for highly nonlinear neutral multiple-delay stochastic McKean-Vlasov equations, J. Comput. Appl. Math. (2023) 115682.
		%	
		%	
		%	\bibitem{122}S. Gao, Z. Liu, J.Hu, Q. Guo, Multiple-delay stochastic McKean-Vlasov equations with H02lder diffusion coefficients and their numerical schemes, Discrete Contin. Dyn. Syst. Ser. S, 16(5) (2023) 1080-1110.
		
		\bibitem{songm}Y. Geng, M. Song, Y. Lu, M. Liu, Convergence and stability of the truncated Euler-Maruyama method for stochastic differential equations with piecewise continuous arguments, Numer. Math. Theory Methods Appl. 14(1) (2021) 194-218.
		%	\bibitem{guo}Q. Guo, X. Mao, R. Yue, The truncated Euler¨CMaruyama method for stochastic differential delay equations, Numer. Algorithms, 78(2) (2018) 599-624.
		
		
		\bibitem{gongqiao}J. Gong, H. Qiao, The stability for multivalued McKean-Vlasov SDEs with non-Lipschitz coefficients, arXiv preprint, arXiv:2106.12080, 2021.
		
		\bibitem{gov}T.E. Govindan, N.U. Ahmed, On Yosida approximations of McKean-Vlasov type stochastic evolution equations, Stoch. Anal. Appl. 33(3) (2015) 383-398.	
		
		
		\bibitem{5}C. Guhlke, P. Gajewski, M. Maurelli, P. K. Friz, W. Dreyer, Stochastic many-particle model for LFP electrodes, Contin. Mech. Thermodyn. 30 (2018) 593-628.
		
		
		\bibitem{pguo}	P. Guo, C. Li, Almost sure exponential stability of numerical solutions for stochastic pantograph differential equations, J. Math. Anal. Appl. 460(1) (2018) 411-424.
		
		
		\bibitem{boom}M. Hutzenthaler, A. Jentzen, P. Kloeden, Strong and weak divergence in finite time of Euler’s method for stochastic diﬀerential equations with non-globally Lipschitz continuous coeﬃcients. Proc. R. Soc. A 467 (2011), 1563-1576.
		%	\bibitem{hu}L. Hu, X. Li, X. Mao, Convergence rate and stability of the truncated Euler¨CMaruyama method for stochastic differential equations, J. Comput. Appl. Math. 337 (2018) 274-289.
		
		\bibitem{huangc}	C. Huang, Exponential mean square stability of numerical methods for systems of stochastic differential equations, J. Comput. Appl. Math. 236(16) (2012) 4016-4026.
		\bibitem{1991}	I. Karatzas, S. Shreve, Brownian motion and stochastic calculus, Springer Science \& Business Media (1991).
		
		\bibitem{100}R. Khasminskii, Stochastic Stability of Differential Equations, second, Springer, Heidelberg, 2012.
		
		\bibitem{51}C. Kumar, Neelima, C. Reisinger, W. Stockinger, Well-posedness and tamed schemes for McKean-Vlasov equations with common noise, Ann. Appl. Probab. 32 (2022) 3283-3330.
		
		\bibitem{adaptiveEuler}H. Lamba, J. C. Mattingly, A. M. Stuart, An adaptive Euler-Maruyama scheme for SDEs: convergence and stability, IMA J. Numer. Anal. 27(3) (2007) 479-506.
		%	\bibitem{lix}	X. Li, X. Mao, G. Yin, Explicit numerical approximations for stochastic differential equations in finite and infinite horizons: truncation methods, convergence in p th moment and stability, IMA J. Numer. Anal. 39(2) (2019) 847-892.
		
		%	\bibitem{siamstability}J. Lei, M. C. Mackey, Stochastic differential delay equation, moment stability, and application to hematopoietic stem cell regulation system, SIAM J. Appl. Math. 67(2) (2007) 387-407.
		%
		
		\bibitem{languang}G. Lan, F. Xia, Q. Wang, Polynomial stability of exact solution and a numerical method for stochastic differential equations with time-dependent delay, J. Comput. Appl. Math. 346 (2019) 340-356.
		
		
		
		\bibitem{opinion}	H. Liang, Z. Zhou, Stochastic stability of opinion dynamics with stubbornness, 2017 11th IEEE Asian Control Conference (ASCC), (2017) 2340-2345.
		
		\bibitem{mao1997}X. Liao, X. Mao, Almost sure exponential stability of neutral stochastic differential difference equations, J. Math. Anal. Appl. 212(2)  (1997) 554-570.
		
		
		
		\bibitem{05}H. Liu, B. Shi, F. Wu, Tamed Euler-Maruyama approximation of McKean-Vlasov stochastic differential equations with super-linear drift and H\"older diffusion coefficients, Appl. Numer. Math. 183 (2023) 56-85.
		
		%\bibitem{liuk}	K. Liu, Stability of infinite dimensional stochastic differential equations with applications. Boca Raton, FL: Chapman and Hall/CRC. 2006
		%	\bibitem{liurandom}	W. Liu ,  X. Mao, Almost sure stability of the Euler¨CMaruyama method with random variable stepsize for stochastic differential equations, Numer. Algorithms. 74 (2017) 573-592.
		
		\bibitem{mao0} X. Mao, Exponential Stability of Stochastic Differential Equations, Marcel Dekker, New York, 1994.
		
		\bibitem{maom} X. Mao, Almost sure exponential stability in the numerical simulation of stochastic differential equations, SIAM J. Numer. Anal. 53(1) (2015) 370-389.
		
		%	\bibitem{mao1}	X. Mao, Stochastic Differential Equations and Applications, 2nd ed. Horwood, Chichester, 2007.
		
		
		
		%	\bibitem{mao3}X. Mao, G. Yin, C. Yuan, Stabilization and destabilization of hybrid systems of stochastic differential equations, Automatica, 43(2) (2007) 264-273.
		
		%	\bibitem{mao2}X. Mao, L. Szpruch, Strong convergence and stability of implicit numerical methods for stochastic differential equations with non-globally Lipschitz continuous coefficients, J. Comput. Appl. Math. 238 (2013) 14-28.
		
		
		%	\bibitem{mao5}X. Mao, Y. Shen, A.Gray, Almost sure exponential stability of backward Euler¨CMaruyama discretizations for hybrid stochastic differential equations, J. Comput. Appl. Math.  235(5) (2011) 1213-1226.
		
		\bibitem{mao4}D. J. Higham, X. Mao, C. Yuan, Almost sure and moment exponential stability in the numerical simulation of stochastic differential equations, SIAM J. Numer. Anal. 45(2) (2007) 592-609.
		
		\bibitem{6} H.P. McKean, A class of Markov processes associated with nonlinear parabolic equations, Proc. Nat. Acad. Sci. 56 (6)
		(1966) 1907-1911.
		
		%\bibitem{7}H.P. McKean, Fluctuations in the kinetic theory of gases, Comm. Pure Appl. Math. 28 (4) (1975) 435-455.
		
		\bibitem{8} H.P. McKean, Propagation of chaos for a class of non-linear parabolic equations, Lecture Series in Differential Equations. 2 (1967) 41-57.
		
		\bibitem{MM}	M. Milo\v{s}evi\'{c}, Almost sure exponential stability of solutions to highly nonlinear neutral stochastic differential equations with time-dependent delay and the Euler-Maruyama approximation, Mathematical and Computer Modelling, 57(3-4) (2013) 887-899.
		
		
		\bibitem{9} Neelima, S. Biswas, C. Kumar, G.D. Reis, C. Reisinger, Well-posedness and tamed Euler schemes for McKean-Vlasov
		equations driven by L\'{e}vy noise, arXiv preprint, arXiv: 2010.08585, 2020.
		
		\bibitem{21} A.S. Sznitman, Topics in propagation of chaos, Springer, Berlin, Heidelberg, 1991.
		
		\bibitem{dengfeiqi}S. Pang, F. Deng, X. Mao, Almost sure and moment exponential stability of Euler-Maruyama discretizations for hybrid stochastic differential equations, J. Comput. Appl. Math. 213(1) (2008) 127-141.
		
		\bibitem{suoy}Y. Song, Gradient estimates and exponential ergodicity for Mean-Field SDEs with jumps, J. Theoret. Probab. 33 (2020) 201-238.
		
		%	\bibitem{tanjian}J. Tan, H. Wang,  Convergence and stability of the split-step backward Euler method for linear stochastic delay integro-differential equations, Mathematical and Computer Modelling, 51 (2010) 504-515.
		
		\bibitem{xiezhang}Y. Xie, C. Zhang, Compensated split-step balanced methods for nonlinear stiff SDEs with jump-diffusion and piecewise continuous arguments, Sci. China Math. 63 (2020) 2573-2594.
		
		
		\bibitem{32} F.Y. Wang, Distribution dependent SDEs for Landau type equations, Stochastic Process. Appl. 128 (2) (2018) 595-621.
		
		\bibitem{wanggan}X. Wang, S. Gan, Compensated stochastic theta methods for stochastic differential equations with jumps, Appl. Numer. Math. 60(9) (2010) 877-887.
		
		\bibitem{wuhao} 	H. Wu, J. Hu, S. Gao, C. Yuan. Stabilization of Stochastic McKean--Vlasov Equations with Feedback Control Based on Discrete-Time State Observation, SIAM J. Control Optim. 60(5) (2022) 2884-2901.
		
		%	\bibitem{zhangch} C. Zhang, Y. Xie, Backward Euler-Maruyama method applied to nonlinear hybrid stochastic differential equations with time-variable delay, Sci. China Math. 62 (2019) 597-616.
		
		%	\bibitem{33}S. Zhou, Strong convergence and stability of backward Euler¨CMaruyama scheme for highly nonlinear hybrid stochastic differential delay equation, Calcolo 52 (2015) 445-473.
		%\bibitem{zhou}S. Zhou, S. Xie, Z. Fang, Almost sure exponential stability of the backward Euler¨CMaruyama discretization for highly nonlinear stochastic functional differential equation, Appl. Math. Comput. 236 (2014) 150-160.
		
		\bibitem{34}X. Zong, F. Wu, Choice of $\theta$ and mean-square exponential stability in the stochastic theta method of stochastic differential equations, J. Comput. Appl. Math. 255 (2014) 837-847.
		
		
		
		
		
		
	\end{thebibliography}
\end{document}